\newtheorem{thm}{Theorem}[section]
\newtheorem*{Warnke}{Warnke's inequality}
\newtheorem*{Janson}{Janson's inequality}
\newtheorem*{FKG}{The FKG inequality}
\newtheorem{lemma}[thm]{Lemma}
\newtheorem{prop}[thm]{Proposition}
\newtheorem{obs}[thm]{Observation}
\newtheorem{fact}[thm]{Fact}
\theoremstyle{remark}
\theoremstyle{definition}
\newtheorem{defn}[thm]{Definition}
\newcommand{\ds}{\displaystyle}
\newcommand{\Var}{\operatorname{Var}}
\def\SF{\textup{SF}}
\def\B{\mathcal{B}}
\def\C{\mathcal{C}}
\def\E{\mathcal{E}}
\def\G{\mathcal{G}}
\def\HH{\mathcal{H}}
\def\M{\mathcal{M}}
\def\O{\mathcal{O}}
\def\P{\mathcal{P}}
\def\S{\mathcal{S}}
\def\Ex{\mathbb{E}}
\def\Exp{\mathbb{E}}
\def\N{\mathbb{N}}
\def\Pr{\mathbb{P}}
\def\Z{\mathbb{Z}}
\def\eps{\varepsilon}
\def\le{\leqslant}
\def\ge{\geqslant}
\title[The sharp threshold for sum-free sets in even-order abelian groups]{The sharp threshold for maximum-size sum-free subsets in even-order abelian groups}
\author{Neal Bushaw \and
        Maur\'icio Collares Neto \and
        Robert Morris \and
        Paul Smith}
        \address{
    School of Mathematics and Statistics,
Arizona State University,
Tempe, AZ 85287 USA
 } \email{neal@asu.edu}
 \address{
    IMPA, Estrada Dona Castorina 110, Jardim Bot\^anico, Rio de Janeiro, RJ, Brasil
 } \email{collares|rob|psmith@impa.br}
 \thanks{Research supported in part by a CAPES bolsa Proex (MCN), a CNPq bolsa PDJ (PS) and a CNPq bolsa de Produtividade em Pesquisa (RM)}
\begin{document}

\begin{abstract}
We study sum-free sets in sparse random subsets of even-order abelian groups. In particular, we determine the sharp threshold for the following property: the largest such set is contained in some maximum-size sum-free subset of the group. This theorem extends recent work of Balogh, Morris and Samotij, who resolved the case $G = \Z_{2n}$, and who obtained a weaker threshold (up to a constant factor) in general.
\end{abstract}

\maketitle

\section{Introduction}

In recent years, great advances have been made in the study of the extremal and structural properties of sparse random sets. For example, the threshold functions for many classical theorems, such as Szemer\'edi's theorem on arithmetic progressions and Tur\'an's theorem in extremal graph theory, have been determined (see~\cite{BMS2,CG,FRS,ST,Sch}), and in a few cases sharp thresholds have been shown to exist (see, e.g.,~\cite{BMSW,FRRT}). In this paper we will determine the sharp threshold for the maximum sum-free subset problem in an arbitrary even-order abelian group. Our main theorem improves some recent results of Balogh, Morris and Samotij~\cite{BMS1}, who resolved the case~$G = \Z_{2n}$, and obtained weaker bounds in the general setting.

Given an abelian group $G$, we say that a subset $A \subset G$ is \emph{sum-free} if $A \cap (A+A) = \emptyset$, or, equivalently, if there is no solution to the equation $x + y = z$ with $x,y,z \in A$. The study of such sets was introduced by Schur~\cite{Schur} in 1916, and their extremal and structural properties have been extensively studied over the past several decades (see, e.g.,~\cite{AK}). For example, it is easy to see that if $|G| = 2n$ then the largest sum-free subset of $G$ has size $n$ (consider the odd coset of a subgroup of index 2), and in 1969 Diananda and Yap~\cite{DY} extended this simple fact by solving the extremal problem whenever $|G|$ has a prime divisor $q$ with $q \not\equiv 1 \pmod 3$. Nevertheless, more than 30 years passed before the classification was completed by Green and Ruzsa~\cite{GR}. The structure of a typical sum-free subset of an even-order abelian group was determined by Lev, \L uczak and Schoen~\cite{LLS} and Sapozhenko~\cite{Sap02}, and similar results in the set $\{1,\ldots,n\}$ were obtained by Green~\cite{Green} and Sapozhenko~\cite{Sap03}. We refer the reader to~\cite{ABMS1,ABMS2} for some more recent sparse refinements of these results.

The study of sparse random analogues of classical extremal and Ramsey-type results was introduced for graphs by Frankl and R\"odl~\cite{FR86} and Babai, Simonovits and Spencer~\cite{BSS}, and for additive structures by Kohayakawa, {\L}uczak and R\"odl~\cite{KLR1}, and notable early progress was made by R\"odl and Ruci\'nski~\cite{RR1,RR2}. The first result of this type for sum-free sets was obtained by Graham, R\"odl and Ruci\'nski~\cite{GRR}, who determined the threshold function for Schur's theorem. More precisely, they showed that if $p \gg 1/\sqrt{n}$, then almost every $p$-random\footnote{The $p$-random subset of a set $X$, often denoted $X_p$, is obtained by including each element with probability~$p$, independently of all other elements.} subset $A \subset \Z_n$ has the following property: every 2-colouring of $A$ contains a monochromatic Schur triple, i.e., a triple with $x + y = z$. On the other hand, if $p \ll 1/\sqrt{n}$ then with high probability there exist 2-colourings of $A$ that avoid such triples.

In this paper we consider the extremal version of this question, that is, how large is a maximum-size sum-free set in a $p$-random subset of an abelian group? For the group $\Z_{2n}$, this problem was resolved (asymptotically) by Conlon and Gowers~\cite{CG} and Schacht~\cite{Sch}, who determined the following threshold:
\begin{equation}\label{eq:CGS}
\max\big\{ |B| \,:\, B \subset A = (\Z_{2n})_p \textup{ is sum-free} \big\} \,=\, \left\{
\begin{array} {c@{\quad \textup{if} \quad}l}
\big( 1+o(1) \big) \cdot 2pn & p \ll 1/\sqrt{n} \\[+1ex]
\big( 1/2 + o(1)  \big) \cdot 2pn & p \gg 1/\sqrt{n}
\end{array}\right.
\end{equation}
with high probability as $n \to \infty$. More precisely, one can show using the methods of~\cite{CG,Sch} (see~\cite{BMS1,Sam}), and also using those of~\cite{BMS2,ST}, that (with high probability) the maximum-size sum-free subsets of $A$ contain only $o(pn)$ even numbers. Moreover, a corresponding result holds for any even-order abelian group. This fact will be a key tool in the proof below.

We will be interested in the following more precise question, which was first studied by Balogh, Morris and Samotij~\cite{BMS1}. Given an even-order abelian group $G$, note that the maximum-size sum-free subsets of $G$ are exactly the odd cosets of subgroups of index 2, and that a $p$-random subset $A \subset G$ has a sum-free subset of (expected) size
\begin{equation}\label{eq:lowerbound}
\max\big\{ |A \cap \O| \,:\, \O \textup{ is the odd coset of a subgroup of index 2} \big\} \, \ge \, \left( \frac{1}{2} + o(1) \right) p |G|.
\end{equation}
For which functions $p = p(n)$ is it true that, with high probability, the size of the largest sum-free subset of $A$ is equal to the left-hand side of~\eqref{eq:lowerbound}? In other words, for which densities does the \emph{exact} extremal result in $G$ transfer to the sparse random setting? Proving such exact extremal results is often extremely difficult; for example, the threshold for Mantel's theorem was determined only very recently by DeMarco and Kahn~\cite{DMK}. Nevertheless, it was shown in~\cite{BMS1} that the threshold for this property is $\big( \frac{\log n}{n} \big)^{1/2}$ for every even-order\footnote{In fact Theorem~1.1 of~\cite{BMS1} is more general: it determines the threshold for any abelian group whose order has a (fixed) prime factor $q$ with $q \equiv 2 \pmod 3$. Here, as before, we set $|G| = qn$.} abelian group, and moreover that there is a sharp threshold at $\big( \frac{\log n}{3n} \big)^{1/2}$ in the group $\Z_{2n}$. In other words, writing $\SF(A)$ for the collection of maximum-size sum-free subsets of $A$, and $\O_{2n}$ for the set of odd numbers in $\Z_{2n}$, they proved that for every $\eps>0$,
$$\Pr\Big( \SF\big( (\Z_{2n})_p \big) = \big\{ (\Z_{2n})_p \cap \O_{2n} \big\} \Big) \to \left\{
\begin{array} {c@{\quad \textup{if} \quad}l}
0 & p \le \big(1 - \eps \big) \sqrt{ \frac{\log n}{3n} } \\[+1ex]
1 & p \ge \big(1 + \eps \big) \sqrt{ \frac{\log n}{3n} }
\end{array}\right.$$
as $n \to \infty$. For more on the general theory of the existence of (sharp) thresholds, we refer the reader to~\cite{BT,Friedgut,Hatami}, and to~\cite{FRRT} for an example involving monochromatic triangles.

Since Balogh, Morris and Samotij~\cite{BMS1} were able to prove such a sharp threshold for the group $\Z_{2n}$, but only a weaker threshold result for other even-order abelian groups, it   is natural to ask whether one can also obtain a more precise result in the general setting. In this paper we answer this question in the affirmative, by determining the sharp threshold for every even-order abelian group. In order to state our main theorem, we shall need the following function, which determines the location of the sharp threshold.

\begin{defn}
Given an abelian group $G$ with $|G| = 2n$, let $r(G)$ denote the number of elements $x \in G$ such that $x = -x$, and set
$$\alpha(G) \, := \, \frac{\log r(G)}{\log n} \qquad \textup{and} \qquad \beta(G) \, := \, \frac{r(G)}{n}.$$
Now, given $\delta > 0$, define $\lambda^{(\delta)}(G)$ as follows:
 $$\lambda^{(\delta)}(G) \, := \, \left\{
\begin{array} {c@{\quad \textup{if} \quad}l}
1/3 & \alpha(G) \le 5/6 \\[+1ex]
\alpha(G) - 1/2 & \alpha(G) > 5/6 \textup{ and } \beta(G) < \delta\\[+1ex]
2 / \big( 4 - \beta(G) \big) & \beta(G) \ge \delta.
\end{array}\right.$$
\end{defn}

We encourage the reader to think of $\delta$ as a function going to zero slowly, and $n$ as a function going to infinity much faster. The following theorem is our main result.

\begin{thm}\label{thm:even:abelian}
For every $\eps > 0$, and every sufficiently small $0 < \delta < \delta_0(\eps)$, there exists $n_0(\eps,\delta) \in \N$ such that the following holds for every $n \ge n_0(\eps,\delta)$. Let $G$ be an abelian group of order $2n$, and let $p \in (0,1)$ with $p \ge (\log n)^2 / n$. If $A$ is a $p$-random subset of $G$, then
  $$\Pr\Big( A \cap \O \in \SF(A) \textup{ for some } \O \in \SF(G) \Big) = \left\{
\begin{array} {c@{\quad \textup{if} \quad}l}
o(1) & p \le \big(1 - \eps \big) \sqrt{ \lambda^{(\delta)}(G) \frac{\log n}{n} } \\[+1ex]
1 + o(1) & p \ge \big(1 + \eps \big) \sqrt{ \lambda^{(\delta)}(G) \frac{\log n}{n} }.
\end{array}\right.$$
\end{thm}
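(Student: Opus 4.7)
The plan is to prove the two halves of the sharp threshold separately. Write $\lambda = \lambda^{(\delta)}(G)$ throughout, and recall from~\eqref{eq:CGS} and the paragraph following it that the maximum sum-free subsets of $A$ have size $(1/2 + o(1))p|G|$ and contain only $o(pn)$ elements outside the odd cosets.

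\textbf{The $1$-statement (supercritical regime).} Assume $p \ge (1+\eps)\sqrt{\lambda \log n / n}$. I would first strengthen the stability statement from~\eqref{eq:CGS} via the container/stability methods of~\cite{BMS2,CG,Sch,ST} to deduce that w.h.p.\ every maximum-size sum-free subset $B \subset A$ differs from some $A \cap \O$ on only $o(pn)$ elements, for some $\O \in \SF(G)$. Fix such an $\O$ and call a pair $(S,T)$ with $S \subset A \setminus \O$, $T \subset A \cap \O$, and $((A \cap \O) \setminus T) \cup S$ sum-free a \emph{profitable swap} when $|S| > |T|$. The task then reduces to showing that w.h.p.\ no profitable swap exists near any odd coset. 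I would classify the profitable swaps by the combinatorial type of the Schur-triple constraints they impose, and bound the expected number of occurrences of each type using Janson's inequality together with the FKG inequality. The regime-dependent form of $\lambda$ reflects which swap type is critical: for small $r(G)$ the ``standard'' booster from the $\Z_{2n}$ analysis of~\cite{BMS1} gives the $1/3$ constant; for $\alpha(G) > 5/6$ a family of cheaper swaps built around elements with $z = -z$ (which admit unusually few non-trivial Schur triples $x+y=z$ with $x,y \in \O$) takes over, yielding $\alpha(G) - 1/2$; and for $\beta(G) \ge \delta$ a whole coset of the 2-torsion subgroup must be swapped coherently, producing $2/(4-\beta(G))$. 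A union bound over the at-most-polynomially-many cosets $\O \in \SF(G)$ finishes this direction.

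\textbf{The $0$-statement (subcritical regime).} For $p \le (1-\eps)\sqrt{\lambda \log n / n}$, fix any $\O \in \SF(G)$, identify the dominant booster in the current regime, and show that its expected count in $A$ diverges. I would then apply either Chebyshev's inequality (after a variance calculation) or Janson's inequality for the lower tail to conclude that w.h.p.\ at least one booster is realised, which exhibits a sum-free subset of $A$ strictly larger than $A \cap \O$. The constants in $\lambda^{(\delta)}$ are calibrated exactly so that the relevant expectation transitions from $\omega(1)$ to $o(1)$ at the stated $p$, which is what yields sharpness.

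\textbf{Main obstacle.} I expect the hardest case to be the third regime, $\beta(G) \ge \delta$, where $r(G) = \Theta(n)$ and $G$ contains a large 2-torsion subgroup $H$. Here swaps interact with the coset structure of $H$ non-trivially --- a block of involutions may need to be inserted or deleted as a unit --- and the effective ``cost'' of a booster becomes a function of $\beta(G)$, which accounts for the non-constant threshold $2/(4-\beta(G))$. Matching the constants in both directions, and checking that the three regimes interpolate continuously at the boundaries $\alpha = 5/6$ and $\beta = \delta$, is where the heaviest calculation sits. A secondary difficulty is proving the granular stability statement at the top of the $1$-statement uniformly across all even-order abelian groups, with an error term small enough that stability noise does not drown out the booster count.
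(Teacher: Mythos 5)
Your proposal correctly identifies the high-level strategy (stability, then ruling out local swaps, with booster counting controlling the sharpness), but it has two genuine gaps where the paper had to do real work that you have elided.

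\textbf{The union bound in the $1$-statement does not close.} You end the supercritical argument with ``A union bound over the at-most-polynomially-many cosets $\O \in \SF(G)$ finishes this direction.'' This fails. An even-order abelian group can have as many as $|G| = 2n$ distinct maximum sum-free sets (e.g.\ the hypercube $\Z_2^k$), and the per-coset probability that a profitable swap of size $k$ exists is only about $n^{-\delta k}$; for $k=1$ that is $n^{-\delta}$, so the naive union bound gives $n \cdot n^{-\delta} \to \infty$. More to the point, the threshold for ``$A\cap\O$ is locally maximal for \emph{every} $\O$'' is genuinely larger than $\lambda^{(\delta)}(G)$, so no sharpening of the per-coset estimate can rescue the union bound. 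The paper's fix is to observe that one only needs local maximality for an $\O$ achieving $\max_{\O'}|A\cap\O'|$, and to control which cosets could be near the top by coupling with the hypergeometric distribution (Lemma~\ref{le:existsb}): fixing $|A|=m$ essentially decorrelates the variables $|A\cap\O|$, so that one can choose a threshold $b$ for which the expected number of cosets with $|A\cap\O|\ge b$ is $n^{o(1)}$ while w.h.p.\ some coset exceeds $b + 1/\delta^2$. The union bound then runs only over $n^{o(1)}$ ``high'' cosets for small $k$, while swaps with $k \ge 1/\delta^2$ are individually of probability at most $n^{-1/\delta} \ll n^{-3}$ and can be union-bounded crudely. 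Without some device of this sort your argument does not go through.

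\textbf{Chebyshev is too weak for the $0$-statement.} You propose to show the booster (``safe element'') count diverges and then apply Chebyshev or Janson's lower tail. But for the $0$-statement one must again union-bound over up to $|G|$ cosets $\O$, so the per-coset failure probability needs to be $O(n^{-2})$ or better (and, separately, the cosets with atypical structure --- the ``non-nice'' subgroups --- need to be handled via Lemma~\ref{lem:M:nice}). A second-moment bound gives only an inverse-polynomial bound controlled by $\mathrm{Var}/\mathbb{E}^2$, which is not small enough when summed over $n$ cosets. The paper instead invokes Warnke's typical bounded-differences inequality: after verifying a ``typical Lipschitz'' condition for the safe-element count (Lemmas~\ref{lem:prob:gamma} and~\ref{lem:Xconcentration}), this yields an exponential lower-tail bound which comfortably beats the union bound. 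Janson's lower-tail inequality is also not applicable here because the relevant quantity $|S^\E(A)|$ is not a count of the form ``number of $B_i$ fully contained in $X_p$.'' If you want to stick with a second-moment argument you would at minimum need to exploit, as the paper does, that only the $\O$ maximising $|A\cap\O|$ matters, and you would still need a tail bound sharper than Chebyshev for the nice subgroups.

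A smaller point: your description of the three regimes of $\lambda^{(\delta)}$ is roughly on the right track morally, but in the paper the dichotomy is not between ``swap types''; it is encoded entirely in the edge counts of the Cayley graphs $\G_S$ (Propositions~\ref{prop:edge:counts} and~\ref{prop:edge:counts:bigR}), where the single most dangerous addition is always a single element $x\in\E$, and the regimes correspond to whether $x\in R(G)$ dominates the expected safe-element count and whether $W = \{a+a : a\in\O\}$ is small.
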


Here, as usual, $o(1)$ denotes a function that tends to zero as $n \to \infty$. We shall refer to the two bounds as the 0- and 1-statements respectively.

The proof of Theorem~\ref{thm:even:abelian} uses the method of~\cite{BMS1}, but we will require several substantial new ideas in order to overcome various obstacles which do not occur in the case $G = \Z_{2n}$. Many of these arise from the fact that $\SF(G)$ can be quite large (as big as $|G|$ in the case of the hypercube), which means that we must obtain much stronger bounds than in~\cite{BMS1} if we wish to apply the union bound. For the 0-statement we shall do this using a recent concentration inequality of Warnke~\cite{Warnke}, which allows us to deduce for almost all $\O \in \SF(G)$ that, with very high probability, the set $A \cap \O$ is not a maximal sum-free set. For the 1-statement, however, such a straightforward strategy is not feasible, since the threshold for the event that $A \cap \O$ is maximal for \emph{every} odd coset $\O \in \SF(G)$ is not given by $\lambda^{(\delta)}(G)$.

In order to avoid this problem, we need to show that $A \cap \O$ is a maximal sum-free set for each $\O \in \SF(G)$ such that $|A \cap \O|$ is maximal. Unfortunately, conditioning on the size of $A \cap \O$ introduces significant dependence between odd cosets, and our first attempts to prove the 1-statement failed as a consequence. We resolve this issue by fixing the \emph{number} of elements of $A$ (i.e., coupling with the hypergeometric distribution), which essentially eliminates the positive correlation between the quantities $|A \cap \O|$ for different cosets.

\pagebreak

A third issue involves the analysis of the Cayley graphs $\G_S$ for each $S \subset \E$, where $\E$ is a subgroup of index~$2$, $V(\G_S) = \O$ (the corresponding odd coset) and $xy \in E(\G_S)$ if either $x + y \in S$ or $x - y \in S$. Although counting the edges in these graphs precisely is not entirely trivial, we are fortunate that we can absorb most of the resulting mess into an error term. However, we still need to do some rather careful (and delicate) counting of the number of sets~$S$ that contain a given number of edges of $\HH_W$, the Cayley graph of the set $W = \{ a + a : a \in \O \}$, since this controls the size of $e(\G_S)$, see Section~\ref{sec:edge:counts}.

The remainder of the paper is organized as follows. In Section~\ref{sec:preliminaries}, we recall the structural version of~\eqref{eq:CGS} for even-order abelian groups from~\cite{BMS1}, and collect some probabilistic tools and simple group-theoretic facts that will be needed later. In Section~\ref{sec:edge:counts} we analyse the Cayley graph $\G_S$ for each set $S \subset \E$, where $\E$ is a subgroup of index 2, and count the number of such sets $S$ whose Cayley graph has fewer edges than expected. In Section~\ref{sec:zero:statement} we deduce the 0-statement from Warnke's concentration inequality (see Section~\ref{sec:prob:tools}), together with some of the more straightforward bounds from Section~\ref{sec:edge:counts}. Finally, in Section~\ref{sec:one:statement} we prove the 1-statement of Theorem~\ref{thm:even:abelian} using the method of~\cite{BMS1}, combined with the coupling argument and careful counting described above. We end the paper with a short Appendix, which contains a somewhat technical calculation involving the hypergeometric distribution.

\section{Preliminaries}\label{sec:preliminaries}

In this section we shall lay the groundwork necessary for the proof of our main theorem. In particular, we will recall the asymptotic stability version of Theorem~\ref{thm:even:abelian}, which was proved in~\cite{BMS1} using the method of~\cite{BMS2,CG,Sam,ST,Sch}. We will also recall the FKG inequality and the concentration inequalities of Warnke and Janson, and state some simple facts about abelian groups that will be useful in the proof.

\subsection{Sparse stability for sum-free sets}

We begin by recalling the following  theorem from~\cite{BMS1}, which determines the asymptotic structure of the maximum-size sum-free subsets in a $p$-random subset of an even-order abelian group. The theorem follows by either the method of Conlon and Gowers~\cite{CG}, or that of Schacht~\cite{Sch} (as modified by Samotij~\cite{Sam}), or that of Balogh, Morris and Samotij~\cite{BMS2} and Saxton and Thomason~\cite{ST}, in each case using results of Lev, {\L}uczak, and Schoen~\cite{LLS} and Green and Ruzsa~\cite{GR}. We refer the reader to Sections~2 and~3 of~\cite{BMS1} for the details.

\begin{thm}[Theorem~3.1 of~\cite{BMS1}]
\label{thm:approx:stability}
For every sufficiently small $\delta > 0$, there exists a constant $C = C(\delta) > 0$ such that the following holds. Let $G$ be an abelian group of order $2n$. If
  \[
  p \, \ge \, \frac{C}{\sqrt{n}},
  \]
  then, with high probability as $n \to \infty$, for every sum-free subset $B \subset G_p$ with
  \[
  |B| \, \ge \, \left( \frac{1}{2} - \delta \right) p |G|,
  \]
  there is an $\O \in \SF(G)$ such that $|B \setminus \O| \le \delta pn$.
\end{thm}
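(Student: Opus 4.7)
\begin{psketch}
The plan is to reduce the sparse statement to a dense stability theorem for sum-free subsets of $G$, using the hypergraph container method as the bridge from sparse to dense.

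First I would apply the sum-free container theorem in $G$, either via~\cite{BMS2,ST} applied to the $3$-uniform Schur-triple hypergraph on vertex set~$G$, or via the transference arguments of~\cite{CG,Sam,Sch}. Since the Schur-triple hypergraph has $\Theta(n^2)$ edges and bounded co-degrees, this produces a family $\F \subseteq 2^G$ with $|\F| \le \exp\bigl( O(\sqrt{n} \log n) \bigr)$ such that every sum-free subset of $G$ is contained in some $F \in \F$, and each such $F$ is itself nearly sum-free (contains $o(n^2)$ Schur triples).

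Next I would invoke a classification of dense near-sum-free subsets of even-order abelian groups, combining the Freiman-type structural results of Lev, {\L}uczak and Schoen~\cite{LLS} with the Green--Ruzsa classification of maximum sum-free sets~\cite{GR}. For any sufficiently small $\delta > 0$ this yields the dichotomy that each $F \in \F$ either (i)~satisfies $|F| \le \bigl( \tfrac{1}{2} - \tfrac{\delta}{3} \bigr) |G|$, or (ii)~satisfies $|F \setminus \O| \le \tfrac{\delta}{3} |G|$ for some $\O \in \SF(G)$. The probabilistic transfer is then a Chernoff bound plus a union bound: for a fixed $F$ in case~(i), Chernoff gives $\Pr\bigl(|F \cap G_p| \ge (\tfrac{1}{2} - \delta) p |G| \bigr) \le \exp(-c \delta^2 p n)$, ruling out any sum-free $B \subseteq F \cap G_p$ of size $\ge (\tfrac{1}{2} - \delta) p |G|$; for $F$ in case~(ii), Chernoff applied to $F \setminus \O$ yields $\Pr\bigl(|(F \setminus \O) \cap G_p| > \delta p n\bigr) \le \exp(-c' p n)$. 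Multiplying the per-container failure probability by $|\F|$ and using $pn \ge C \sqrt{n}$, the total failure probability is $\exp\bigl( O(\sqrt{n}\log n) - \Omega(C \sqrt{n}) \bigr) = o(1)$ once $C = C(\delta)$ is chosen sufficiently large.

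The main technical obstacle is Step~2: for \emph{arbitrary} even-order abelian groups (as opposed to the rigid case $G = \Z_{2n}$), one must argue from near-sum-freeness of a large subset $F$ to proximity with the odd coset of \emph{some} index-$2$ subgroup, and then pin down which subgroup. This requires combining the Freiman-type structural analysis of~\cite{LLS} with the Green--Ruzsa characterisation of extremal configurations, and is exactly the content referenced from Sections~2 and~3 of~\cite{BMS1}. Once the dichotomy is in hand, the probabilistic step is entirely routine.
\end{psketch}
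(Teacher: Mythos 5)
The paper does not actually prove this theorem; it simply cites~\cite{BMS1}, where it appears as Theorem~3.1, together with the references used there. Your proposed route (hypergraph containers for the Schur-triple hypergraph, plus a robust dense stability theorem drawn from~\cite{LLS,GR}, plus Chernoff and a union bound) is exactly the strategy the authors have in mind, so the high-level approach is right. However, there are two concrete gaps in the way you have executed the probabilistic transfer.

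First, the constants in your dichotomy point in the wrong direction. In case~(i) you require only $|F| \le \big(\tfrac12 - \tfrac{\delta}{3}\big)|G|$, so $\Ex\big[|F\cap G_p|\big] \le \big(\tfrac12 - \tfrac{\delta}{3}\big)p|G|$, which is \emph{larger} than the threshold $\big(\tfrac12 - \delta\big)p|G|$ you are trying to exceed. The event $|F\cap G_p| \ge \big(\tfrac12 - \delta\big)p|G|$ is thus a deviation \emph{below} the mean and typically has probability close to~$1$, so your claimed bound $\exp(-c\delta^2 pn)$ is false. For the Chernoff step to do any work, case~(i) must read $|F| \le \big(\tfrac12 - c'\big)|G|$ with $c' > \delta$ (say $c' = 2\delta$). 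Arranging for the removal/stability step to produce a dichotomy with the size bound on one side \emph{stronger} than $\delta$ and the distance bound on the other side \emph{weaker} than $\delta$ is a genuine constraint on the parameters of the dense stability theorem; it is nontrivial to chain $\gamma \mapsto \eta(\gamma)$ so that the two directions both beat $\delta$, and this is where the real content of Sections~2--3 of~\cite{BMS1} lies.

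Second, the final union bound does not close as stated. You have $|\F| \le \exp\big(O(\sqrt{n}\log n)\big)$, while the per-container failure probability is only $\exp\big(-\Omega(C\sqrt{n})\big)$ at the critical density $p = C/\sqrt{n}$; since $C$ is a \emph{constant}, $\exp\big(O(\sqrt{n}\log n) - \Omega(C\sqrt{n})\big)$ diverges, and no choice of $C(\delta)$ absorbs the $\log n$. The standard fix, which you omit, is to union over \emph{fingerprints}~$T$ rather than containers, weighting each term by $\Pr(T\subset G_p) = p^{|T|}$: since the fingerprint is contained in the sum-free set $B \subset G_p$, the sum $\sum_{|T| \le O(\sqrt{n})} p^{|T|}$ is of order $\exp\big(O(\sqrt{n}\log C)\big)$ rather than $\exp\big(O(\sqrt{n}\log n)\big)$, and is then beaten by the Janson/Chernoff bound once $C$ is large. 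Without this observation the sparse transfer simply does not go through at the advertised density.
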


We remark that the probability of failure in Theorem~\ref{thm:approx:stability} is exponentially small in $pn$.

\subsection{Probabilistic tools}\label{sec:prob:tools}

Our main tool for the 0-statement will be the following concentration inequality, recently proved\footnote{In fact the theorem stated here is only a special case of Warnke's inequality; for the sake of simplicity, we have chosen to state only the version we need.} by Warnke~\cite[Theorem~4]{Warnke}.

\begin{Warnke}\label{thm:Warnke}
Given $N \in \N$, let $\Gamma \subset \{0,1\}^N$ be an event and $f \colon \{0, 1\}^N \to \mathbb{R}$ be a function. Let $p > 0$ and $X = (X_1, \ldots, X_N)$, where $X_k \in \{0,1\}$ and $\Pr(X_k = 1) = p$ for each $k \in [N]$, all independently, and set $\mu = \Ex\big[ f(X) \big]$. Suppose that, for some $c,d > 0$,
$$|f(x) - f(y)| \,\le\,
\begin{cases}
\;c & \textup{if } x \in \Gamma, \\
\;d & \textup{otherwise}
\end{cases}
$$
whenever $x,y \in \{0,1\}^N$ with $|x - y| = 1$, and let $\gamma \in (0,1)$.

There exists an event $\mathcal{B} = \mathcal{B}(\Gamma, \gamma) \subset \{0,1\}^N$, with $\neg \mathcal{B} \subset \Gamma$, such that
$$\Pr\big( X \in \mathcal{B} \big) \, \le \, \frac{N}{\gamma} \cdot \Pr \big( X \not\in \Gamma \big),$$
and moreover, setting $C = c + \gamma\big( d - c \big)$, we have
$$\Pr\big( f(X) \le \mu - t \text{ and } \neg \mathcal{B} \big) \, \le \, \exp \left(- \frac{t^2}{2C^2 pN + Ct} \right)$$
for any $t \ge 0$.
\end{Warnke}

We also recall two well-known probabilistic inequalities: Janson's inequality and the FKG inequality. We refer the reader to~\cite{AS} for various more general statements and their proofs.

\begin{Janson}\label{janson}
  Suppose that $\{B_i\}_{i \in I}$ is a family of subsets of a finite set $X$ and let $p \in [0,1]$. Let
  \[
  \mu = \sum_{i \in I} p^{|B_i|}, \quad \text{and} \quad
  \Delta = \sum_{i \sim j} p^{|B_i \cup B_j|},
  \]
  where $i \sim j$ denotes the fact that $i \neq j$ and $B_i \cap B_j \neq \emptyset$. Then,
  \[
  \Pr\big( B_i \not\subset X_p \text{ for all $i \in I$} \big) \le e^{-\mu + \Delta}.
  \]
  Furthermore, if $2c\mu \le \Delta$ with $c\le 1/4$, then
  \[
  \Pr\big( B_i \not\subset X_p \text{ for all $i \in I$} \big) \le e^{-c\mu^2/ \Delta}.
  \]
\end{Janson}

\begin{FKG}  \label{fkg}
  Suppose that $\{B_i\}_{i \in I}$ is a family of subsets of a finite set $X$ and let $p \in [0,1]$. Then
$$\Pr\big( B_i \not\subset X_p \text{ for all $i \in I$} \big) \, \ge \, \prod_{i \in I} \Pr\big( B_i \not\subset X_p \big).$$
\end{FKG}

\subsection{Group-theoretic facts}

In order to avoid repetition, we shall assume throughout the paper that $G$ is a finite abelian group of order $2n$. Given a subset $X \subset G$, we write
\begin{itemize}
\item $R(X)$ for the collection of elements $x \in X$ for which $x=-x$, and $r(X) = |R(X)|$.\smallskip
\item $m(X)$ for number of two-element subsets of $X$ that are of the form $\{x,-x\}$.
\end{itemize}
We will need a few basic facts about finite abelian groups. The first one is well-known.

\begin{fact}\label{fa:classification}
There exist integers $1 \le a_1 \le \dots \le a_k$ and an odd-order group $J$ such that
$$G \, \cong \, \mathbb{Z}_{2^{a_1}} \oplus \dots \oplus \mathbb{Z}_{2^{a_k}} \oplus J.$$
\end{fact}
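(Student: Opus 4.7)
The plan is simply to invoke the fundamental theorem of finite abelian groups. First I would decompose $G$ as a direct sum of cyclic groups of prime power order, writing
\[ G \,\cong\, \bigoplus_{i=1}^{m} \Z_{p_i^{b_i}}, \]
where each $p_i$ is prime and each $b_i \ge 1$. This is the classical structure theorem, which can be proved either by inducting on $|G|$ using the existence of an element of maximal order (so that the cyclic subgroup it generates splits off as a direct summand), or by first decomposing $G$ as the direct sum of its $p$-primary components and then applying the same argument inside each one.

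Next, I would partition the summands according to whether $p_i = 2$ or $p_i$ is odd; this corresponds to the decomposition of $G$ into its Sylow $2$-subgroup and its complementary Hall subgroup of odd order. Let $J$ be the direct sum of those summands with $p_i$ odd; since it is a direct sum of cyclic groups of odd prime power order, $|J|$ is odd. Reordering the remaining summands so that their exponents of~$2$ satisfy $a_1 \le \dots \le a_k$ yields the claimed decomposition $G \cong \Z_{2^{a_1}} \oplus \dots \oplus \Z_{2^{a_k}} \oplus J$.

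Because $|G| = 2n$ is even, $G$ must contain at least one cyclic summand of even order, so $k \ge 1$ and $a_1 \ge 1$; the constraint $1 \le a_1$ in the statement is then automatic. No substantial obstacle is expected here — the fact is an immediate consequence of a standard structural theorem, and I would not reprove the classification in detail but would instead cite it and note the two observations above (separating off the odd-order part and reordering the $2$-power exponents).
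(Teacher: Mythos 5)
Your proposal is correct and is exactly the standard argument; the paper itself gives no proof, simply noting that the fact is ``well-known,'' and your invocation of the structure theorem for finite abelian groups followed by separating the $2$-primary part from the odd-order part (and reordering the $2$-power exponents) is precisely how one would justify it.
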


The second fact we need is a characterization of the index $2$ subgroups of $G$.

\begin{fact}\label{mainfact}
Let $I \subset \{1, \ldots, k\}$. Writing $x \in G$ as $(x_1, \ldots, x_k, y)$ via the isomorphism of Fact~\ref{fa:classification}, the subgroup $H_I = \big\{ x \in G \, : \, \sum_{i \in I} x_i \equiv 0 \; (\textup{mod} \, 2) \big\}$ is isomorphic to $$\mathbb{Z}_{2^{a_1}} \oplus \dots \oplus \mathbb{Z}_{2^{a_i-1}} \oplus \dots \oplus \mathbb{Z}_{2^{a_k}} \oplus J,$$
where $i = \min I$. Moreover, every subgroup of $G$ of index $2$ is equal to $H_I$ for some $I \neq \emptyset$.
\end{fact}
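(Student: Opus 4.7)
The plan is to prove both assertions by elementary finite group theory.

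First, I would observe that for every non-empty $I \subset \{1,\ldots,k\}$, the map $\phi_I \colon G \to \Z_2$ defined by $\phi_I(x_1,\ldots,x_k,y) = \sum_{j \in I} x_j \pmod 2$ is a well-defined homomorphism (each reduction $\Z_{2^{a_j}} \to \Z_2$ is a homomorphism, and we sum), and it is surjective because $I \neq \emptyset$. Hence $H_I = \ker \phi_I$ has index $2$ in $G$.

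To identify the isomorphism type of $H_I$, with $i = \min I$, I would exhibit explicit generators realising the target direct sum. Write $e_1,\ldots,e_k$ for the standard generators of the cyclic summands and consider the elements $2e_i$, then $e_j$ for each $j \notin I$, and $e_i + e_j$ for each $j \in I \setminus \{i\}$, together with a generating set for $J$. All of these visibly lie in $H_I$. The element $2e_i$ has order $2^{a_i-1}$, and the crucial point is that $a_i \le a_j$ for every $j \in I$ by the choice $i = \min I$, which forces $e_i + e_j$ to have order exactly $2^{a_j}$. A coordinate-by-coordinate check then shows that these generators are $\Z$-linearly independent in $G$: at the coordinates $j \notin I$ and $j \in I \setminus \{i\}$ the corresponding coefficients must vanish, and then at coordinate $i$ one is left with $2\alpha \equiv 0 \pmod{2^{a_i}}$ for some $\alpha \in \Z_{2^{a_i-1}}$, which forces $\alpha = 0$; the $J$-part separates automatically since $J$ has odd order. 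Multiplying the orders of these generators gives exactly $|G|/2 = |H_I|$, so they generate all of $H_I$ and provide the claimed isomorphism.

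For the second assertion, I would simply count the index-$2$ subgroups of $G$. Every such subgroup is the kernel of a unique surjection $\phi \colon G \to \Z_2$. Any such $\phi$ must vanish on $J$ (an odd-order group mapping to a $2$-torsion group) and is determined by its values on $e_1,\ldots,e_k$, giving at most $2^k$ homomorphisms and therefore at most $2^k - 1$ index-$2$ subgroups. On the other hand, the $\phi_I$ for distinct non-empty $I \subset \{1,\ldots,k\}$ are visibly distinct (evaluate on the $e_j$), so the $H_I$ already exhibit $2^k - 1$ distinct index-$2$ subgroups; these must then be all of them.

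The only slightly delicate point is the independence check above, in particular verifying that $e_i + e_j$ has order exactly $2^{a_j}$ for $j \in I \setminus \{i\}$, where it is essential that $a_i = \min_{j \in I} a_j$; everything else is routine.
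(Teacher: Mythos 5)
Your proof is correct, but it takes a genuinely different route from the paper's for both assertions. For the isomorphism type, the paper reduces WLOG to $I = \{1,\ldots,k\}$ and $J = \{0\}$ and then writes down a single injective homomorphism $f\colon H_I \to \Z_{2^{a_1}} \oplus \cdots \oplus \Z_{2^{a_k}}$, $(x_1,\ldots,x_k) \mapsto (x_1+\cdots+x_k, x_2,\ldots,x_k)$, whose image is visibly the set of tuples with even first coordinate; the well-definedness of the first coordinate rests on the same inequality $a_1 \le a_j$ that your argument relies on. You instead exhibit generators $2e_i$, $e_j$ ($j\notin I$), $e_i + e_j$ ($j\in I\setminus\{i\}$) and a generating set for $J$, then verify orders and linear independence and compare orders with $|H_I| = |G|/2$. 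Your route is more hands-on and requires a careful independence check, but it also makes the isomorphism completely explicit in terms of generators; the paper's is slicker but arguably less transparent. For the second assertion, the paper directly identifies the required $I$ from $H$ via the formula $I = \{i : e_i \notin H\}$ using the homomorphism $\mathbbm{1}_{H^c}$, which is slightly more informative (it names the $I$), whereas you argue by counting that both collections of index-2 subgroups have size $2^k - 1$. Both are perfectly valid; your counting argument additionally recovers, as a free byproduct, the fact used later in the paper that $G$ has exactly $r(G) - 1 = 2^k - 1$ index-2 subgroups.
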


\begin{proof}
Without loss of generality, assume that $J = \{0\}$ (and thus omit the last coordinate of elements of $G$) and $I = \{1, \ldots, k\}$. Then the image of the (injective) homomorphism
\begin{align*}
  f \colon H_I &\to \mathbb{Z}_{2^{a_1}} \oplus \dots \oplus \mathbb{Z}_{2^{a_k}} \\
  (x_1, \ldots, x_k) &\mapsto (x_1 + \ldots + x_k, x_2, \ldots, x_k)
\end{align*}
consists of the elements of $G$ whose first coordinate is even. Observe that the addition above is well-defined because there is a natural projection from $\mathbb{Z}_{2^{a_i}}$ to $\mathbb{Z}_{2^{a_1}}$ for any $1 \le i \le k$.

Conversely, given a subgroup $H$ of index $2$, observe that $\mathbbm{1}_{H^c}$ is a homomorphism onto $\mathbb{Z}_2$, which implies that $\mathbbm{1}_{H^c} (x_1, \ldots, x_k) \equiv \sum_{i=1}^k x_i \mathbbm{1}_{H^c}(e_i) \equiv \sum_{i : e_i \notin {H}} x_i \; (\textup{mod} \, 2)$, and thus $H = \big\{x \in G \, : \, \sum_{i : e_i \notin H} x_i \equiv 0 \; (\textup{mod} \, 2) \big\}$.
\end{proof}

Note that Fact~\ref{mainfact} implies that $G$ has exactly $r(G) - 1$ index 2 subgroups. Finally, we make a simple but useful observation.

\begin{fact}
  For any subgroup $H$ of $G$ of index $2$, either $r(H) = r(G)$ or $r(H) = r(G\setminus H)$.
\end{fact}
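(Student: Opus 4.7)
\begin{psketch}
The plan is to exploit the fact that $R(G) = \{x \in G : 2x = 0\}$ is itself a subgroup of $G$ (it is the kernel of the doubling homomorphism $x \mapsto 2x$), and in particular an elementary abelian $2$-group. Combining this with the observation that every element of $R(G)$ lies either in $H$ or in the non-identity coset, we get the disjoint union $R(G) = R(H) \sqcup R(G \setminus H)$, and hence $r(G) = r(H) + r(G \setminus H)$.

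First I would consider the quotient homomorphism $\phi \colon G \to G/H \cong \Z_2$ and restrict it to the subgroup $R(G)$. The kernel of this restriction is $R(G) \cap H = R(H)$, so there are only two possibilities for its image. If $\phi|_{R(G)}$ is trivial, then $R(G) \subset H$, which gives $R(G \setminus H) = \emptyset$ and therefore $r(H) = r(G)$. Otherwise $\phi|_{R(G)}$ is surjective onto $\Z_2$, and by the first isomorphism theorem $R(H)$ has index~$2$ in $R(G)$, so $|R(H)| = |R(G)|/2$; combined with the disjoint union above this yields $r(H) = r(G \setminus H)$, as required.

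The argument is very short, and the only real content is recognising that $R(G)$ is closed under addition — this is what forces the elements fixed by negation to split evenly between the two cosets of $H$ whenever they are not all contained in~$H$. No counting or appeal to Facts~\ref{fa:classification} or~\ref{mainfact} is needed.
\end{psketch}
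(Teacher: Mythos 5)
Your proof is correct, and it takes a genuinely different route from the paper. You observe that $R(G)$ is a subgroup of $G$ (the kernel of the doubling map) and restrict the quotient homomorphism $G \to G/H \cong \Z_2$ to it, obtaining $R(H) = R(G) \cap H$ as the kernel; the dichotomy then falls out of the first isomorphism theorem, with $[R(G):R(H)] \in \{1,2\}$ and the disjoint decomposition $r(G) = r(H) + r(G\setminus H)$. The paper instead gives a hands-on bijection: it fixes any $x \in R(G \setminus H)$ (if none exists, the first alternative holds trivially) and notes that translation $y \mapsto y + x$ is an involution exchanging $R(H)$ and $R(G\setminus H)$. The two arguments are of comparable length, but they highlight different things: the paper's translation map is the more concrete and self-contained, needing no appeal to $R(G)$ being closed under addition, whereas yours makes the underlying algebraic reason transparent — the involutions form a subgroup, so its intersection with an index-$2$ subgroup has index $1$ or $2$. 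Your observation that the result follows without invoking Facts~\ref{fa:classification} or~\ref{mainfact} is accurate; the paper's proof does not use them either.
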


\begin{proof}
  For any $x \in R(G \setminus H)$, $y \mapsto y + x$ is a bijection between $R(H)$ and $R(G\setminus H)$.
\end{proof}

\section{Edge counts in Cayley graphs}\label{sec:edge:counts}

In order to bound the probability of the event ``$A \cap \O \in \SF(A)$" for some fixed maximum-size sum-free set $\O \in \SF(G)$ and its corresponding set of evens $\E = G \setminus \O$, we will need to consider events of the form
$$\textup{``$\big( (A \cap \O) \cup S \big) \setminus T$ is sum-free"}$$
where $S \subset A \cap \E$, $T \subset A \cap \O$ and $|S| \ge |T|$. This event is contained in the event that $(A \cap \O) \setminus T$ is an independent set in the Cayley graph $\G_S$, defined below, and to bound its probability we will need to analyse carefully the number of edges in this Cayley graph for each such set $S$ of evens. In particular, there may be an exceptional collection of sets $S$ with too few edges for our purposes (that is, for our application of the union bound over all sets $S$), and we will need to bound the size of this collection.

Let us begin by stating precisely the main results we will prove in this section. We fix throughout an arbitrary $\eps > 0$, a sufficiently small $\delta > 0$ and a sufficiently large $n \in \N$.\footnote{We think of $\delta$ as a function of $n$ which tends to zero sufficiently slowly as $n \to \infty$.} We also fix an abelian group $G$ of order $2n$, an odd coset $\O \in \SF(G)$, and its corresponding set of evens $\E = G \setminus \O$, which is a subgroup of $G$ of index 2. For each set $S \subset \E$, we define the Cayley graph $\G_S$ of $S$ to have vertex set $\O$ and edge set
$$E(\G_S) \, = \, \bigg\{ \{y,z\} \in \binom{\O}{2} \,:\, y+z \in S \textup{ or } y-z \in S \bigg\},$$
where (for simplicity) we do not permit $\G_S$ to have loops. Recall that we write $r(X)$ for the number of order 2 elements in $X \subset G$, and $m(X)$ for the number of pairs $\{x,-x\} \subset X$.

We will prove the following propositions.

\begin{prop}\label{prop:edge:counts}
Let $k \in \N$. For every $0 \not\in S \subset \E$ with $|S| = k$ and $m(S) = 0$, we have
$$ \left( \ds\frac{3k - r(S)}{2} \right) n - O\big( r(G) \cdot k^2 \big) \le e(\G_S) \le \left( \ds\frac{3k - r(S)}{2} \right) n.$$
Moreover, if $r(G) \le \delta n$ and $4\delta \le a \le 1$, then there are at most $\big(6/\delta^2\big)^k\big( n / k \big)^{k - (a/2 - \delta) k}$ sets $0 \not\in S \subset \E$ with
$$e(\G_S) \le \left( \ds\frac{3k - r(S)}{2} - ak \right) n$$
such that $|S| = k$ and $m(S) = 0$.
\end{prop}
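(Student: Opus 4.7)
The plan is to prove both parts by a direct decomposition of $E(\G_S)$, and then exploit the coset structure of $W = 2\O$ inside $\E$ for the counting step.

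For the edge count, I split $E(\G_S) = E^+ \cup E^-$ into sum-edges (those $\{y,z\}$ with $y+z \in S$) and difference-edges (those with $y - z \in S$ or $z - y \in S$), and compute each piece separately. For each $s \in S$, the map $y \mapsto s - y$ is an involution on $\O$ whose fixed points (solutions of $2y = s$ in $\O$) form either the empty set or a coset of $R(G) \cap \E$ of size $r(\E)$; summing yields $|E^+| = (kn - r(\E)|S \cap 2\O|)/2$. The hypothesis $m(S) = 0$ forces distinct elements of $S$ to generate disjoint difference-edges (since $-s \in S$ only when $s = -s$): each $s \in R(S)$ contributes $n/2$ edges via the involution $y \mapsto y + s$, and each $s \in S \setminus R(S)$ contributes $n$ edges, yielding $|E^-| = (2k - r(S))n/2$. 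The overlap $|E^+ \cap E^-|$ is bounded by $r(\E) k^2 = O(r(G) k^2)$, because each pair $(s_1, s_2) \in S^2$ admits at most $r(\E)$ ordered solutions $(y,z) \in \O^2$ to $y + z = s_1$ and $y - z = s_2$. Combining via inclusion-exclusion,
$$e(\G_S) \, = \, \frac{(3k - r(S))n}{2} - \frac{r(\E)|S \cap 2\O|}{2} - |E^+ \cap E^-|,$$
from which the upper bound is immediate (both loss terms are nonnegative) and the lower bound follows from $|S \cap 2\O| \le k$ together with the overlap estimate.

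For the counting part, suppose $e(\G_S) \le \big((3k - r(S))/2 - ak\big)n$. Rearranging the identity gives $r(\E)|S \cap 2\O|/2 + |E^+ \cap E^-| \ge akn$; under $r(\E) \le r(G) \le \delta n$ and $a \ge 4\delta$, the first term is at most $\delta kn/2 \le akn/8$, and hence $|E^+ \cap E^-| \ge (7a/8)kn$. A key algebraic simplification is that $s_1 + s_2 \in 2\O$ if and only if $s_1 - s_2 \in 2\O$, since $2\O$ is a coset of $2\E$ in $\E$ and $2s_2 \in 2\E$; hence $|E^+ \cap E^-|$ equals, up to the factor $r(\E)/2$, the number of unordered edges of the Cayley graph $\HH_W$ contained in $S$, where $\HH_W$ has vertex set $\E$ and edges $\{e,f\}$ whenever $e + f \in W = 2\O$. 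The overlap bound then says $S$ must contain $\Omega(akn/r(\E))$ edges of $\HH_W$, and $\HH_W$ has a rigid block structure: adjacency depends only on the coset of $2\E$ in $\E$, so $\HH_W$ decomposes into a disjoint union of complete bipartite graphs between paired cosets (each of size $|2\E| = n/r(\E)$), plus complete graphs on any self-paired cosets.

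To count $k$-subsets of $\E$ containing this many $\HH_W$-edges, one exploits this block structure: if $|S \cap C_i| = k_i$ for the cosets $C_i$ of $2\E$, then $|E(\HH_W, S)| = \sum_i k_i k_{\sigma(i)}$ summed over the pairing $\sigma$, so the edge bound forces $S$ to be heavily concentrated in a small number of cosets. Parameterizing by this concentration pattern, at least $(a/2 - \delta)k$ elements of $S$ must lie in cosets determined by the pattern (each offering only $|2\E| = n/r(\E)$ choices), while the remaining $(1 - a/2 + \delta)k$ elements range more freely over $\E$. Choosing the pattern contributes the factor $(6/\delta^2)^k$, and balancing the binomial coefficients $\prod_i \binom{n/r(\E)}{k_i}$ against $\binom{n}{k}$-type bounds yields the claimed $(6/\delta^2)^k (n/k)^{k - (a/2 - \delta)k}$. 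The main obstacle, and the reason the authors describe this step as ``rather careful (and delicate),'' lies in aligning the exponent $(a/2 - \delta)$ with the per-constraint cost $r(\E)/n$, and in absorbing the contribution of self-paired cosets (where the edge count is $\binom{k_i}{2}$ rather than $k_i k_{\sigma(i)}$) without worsening the bound; a naive greedy union-bound argument overshoots by a factor of $(n/r(\E))^{\Theta(k)}$ that must be canceled using the precise structure of $\E/2\E$.
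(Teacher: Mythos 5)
Your proof of the first part is correct and clean. You decompose $E(\G_S)$ directly into sum-edges $E^+$ and difference-edges $E^-$, compute each via involutions, and correct for the overlap; the paper instead sums $e(\G_x)$ over $x \in S$ (Lemma~\ref{lem:count:Gx}) and then subtracts pairwise intersections via Observations~\ref{obs:GxGy:exists} and~\ref{obs:GxGy:size}. These are essentially the same computation in two different bookkeeping schemes, and your identity
$e(\G_S) = \tfrac{(3k-r(S))n}{2} - \tfrac{r(\E)|S\cap W|}{2} - |E^+\cap E^-|$
does recover Lemma~\ref{lem:count:Gx} when $k=1$. Your reduction of the second part to the statement ``$S$ spans many edges of $\HH_W$'' is also correct and parallels the paper's.

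The genuine gap is in the counting step. After reducing to $e(\HH_W[S]) = \Omega(akn/r(\E))$, you propose to exploit the block structure of $\HH_W$ (a disjoint union of complete bipartite graphs between paired cosets of $2\E$, together with cliques on self-paired cosets). That structural observation is correct, but the ensuing argument is only described, not carried out: the central claim that ``at least $(a/2-\delta)k$ elements of $S$ must lie in cosets determined by the pattern (each offering only $n/r(\E)$ choices)'' \emph{is} the counting lemma, and you do not prove it. Making it precise requires analyzing which vectors $(k_i)_i$ with $\sum_i k_i = k$, $k_i \le n/r(\E)$ satisfy $\sum_i k_i k_{\sigma(i)} + \sum_{\text{self}} \binom{k_i}{2} \ge \Omega(akn/r(\E))$, bounding $\prod_i \binom{n/r(\E)}{k_i}$ over these, and summing over the choice of which cosets are occupied; you acknowledge the difficulty (``a naive argument overshoots by $(n/r(\E))^{\Theta(k)}$'') but do not resolve it. The paper sidesteps the block structure entirely: Lemma~\ref{lemma:choices_for_s} counts ordered sequences $(v_1,\dots,v_k)$, classifies each $v_j$ as high- or low-degree according to its $\HH_W$-edges into $\{v_1,\dots,v_{j-1}\}$, and uses only the bound $\Delta(\HH_W) \le n/r(\E)$ to show there are at most $k/(\delta a)$ choices for a high-degree vertex and that at least $(1-\delta)ak$ vertices must be high-degree. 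This greedy argument is short and fully rigorous; if you wish to pursue the block-structure route you would need to supply the missing optimization over the concentration vectors $(k_i)$, and it is not obvious that this yields a cleaner proof.
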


When $r(G) \ge \delta n$ the edge counts are slightly different.

\begin{prop}\label{prop:edge:counts:bigR}
If $r(G) \ge \delta n$, then, for every $k \in \N$ and $0 \le s \le k$, there are at most $\big(12/\delta\big)^k \big( n / k \big)^s$ sets $0 \not\in S \subset \E$ with
\begin{equation}\label{eq:prop:edgecounts:bigR}
e(\G_S) < \big( s + 1 \big) \bigg( n - \frac{r(\O)}{2} \bigg)
\end{equation}
such that $|S| = k$ and $m(S) = 0$.
\end{prop}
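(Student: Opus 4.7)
The plan is to reduce $e(\G_S)$ to an explicit sum depending on the structure of $S$ inside a Cayley-type graph $\HH_W$ associated to $W = 2\O$, and then count $S$ with small $e(\G_S)$ via this structure. For each $s \in S$, decompose its contribution as $E(s) = E_1(s) \cup E_2(s)$, where $E_1(s) = \{\{y,z\} \subset \O : y + z = s\}$ and $E_2(s) = \{\{y,z\} \subset \O : y - z \in \{s, -s\}\}$. The hypothesis $m(S) = 0$ ensures the $E_1(s)$ are pairwise disjoint across $s \in S$, and likewise the $E_2(s)$. A direct count gives $|E_1(s)| = (n - d^+(s))/2$ with $d^+(s) := |\{y \in \O : 2y = s\}| \in \{0, r(\E)\}$, and $|E_2(s)|$ equals $n$ or $n/2$ according to whether $s$ is non-2-torsion. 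Inclusion-excluding,
\[
e(\G_S) \,=\, \tfrac{1}{2}(3k - r(S)) n - \tfrac{r(\E)}{2}|S \cap W| - |E_1 \cap E_2|.
\]
An edge in $E_1(s) \cap E_2(s')$ forces $2y \in \{s + s', s - s'\}$, which requires $s \pm s' \in W$ and constrains $\{y, z\}$ to at most $r(\E)$ possibilities. Summing over $(s, s') \in S^2$ yields $|E_1 \cap E_2| \le k\, r(\O) + O\big(r(\E) N_W(S)\big)$, where $N_W(S) := |\{(s, s') \in S^2 : s \ne s',\ s + s' \in W\}|$.

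Now suppose $e(\G_S) < (s+1)(n - r(\O)/2)$. Combining the above with $r(\E) \ge r(G)/2 \ge \delta n / 2$ (which follows from the last fact of Section~\ref{sec:preliminaries} combined with $r(\E) + r(\O) = r(G)$), a routine rearrangement gives a bound of the form $N_W(S) \ge c(k - s - 1) n / r(\E)$ for a positive absolute constant $c$. The ordered pairs counted by $N_W(S)$ correspond to edges of the Cayley graph $\HH_W$ on $\E$, defined by $\{s, s'\} \in E(\HH_W) \iff s + s' \in W$. Since $W$ is a single coset of $2\E \le \E$ (of size $|2\E| = n / r(\E) \le 2/\delta$) and the identity $W - s = s + W$ holds for every $s \in \E$, the graph $\HH_W$ has maximum degree $|W| \le 2/\delta$ and decomposes as a disjoint union of cliques or bipartite cliques, each of order at most $2|W| \le 4/\delta$. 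In particular, $\HH_W[S]$ has at most $s + 1$ connected components.

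To count such $S$, I would run a greedy spanning-forest construction: pick at most $s + 1$ root vertices (one per component, each contributing at most $n$ choices) and grow each component by successively adjoining $\HH_W$-neighbours of earlier vertices (at most $2/\delta$ new options per step). The rigid coset structure of $\HH_W$ — each component lies in one or two cosets of $2\E$ of size $\le 2/\delta$ — lets us consolidate the root selection into a choice of coset (out of $r(\E) \le n$), after which the remaining vertices of each component are forced into a set of size $\le 4/\delta$. A Stirling-type passage then converts $n^{s+1}/(s+1)!$ into $(n/k)^s$, with the remaining combinatorial factors absorbed into the base $(12/\delta)^k$. The main obstacle will be this final counting step: extracting $(n/k)^s$ rather than the naive $n^{s+1}$ requires carefully exploiting the coset structure above, while the explicit constant $12$ emerges after absorbing factors like $\binom{k}{s+1} \le 2^k$, parent-index factors $\le k^{k-s-1}$, and $|W|^{k-s-1}$ into the base.
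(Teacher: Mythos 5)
Your approach is genuinely different from the paper's. Where the paper runs a direct sequential argument over the sequence $(v_1,\ldots,v_k)$, you instead derive an exact inclusion-exclusion identity for $e(\G_S)$, convert the hypothesis $e(\G_S) < (s+1)(n-r(\O)/2)$ into a lower bound on the number of $\HH_W$-edges inside $S$, and then try to count $S$ by growing a spanning forest in $\HH_W$. The formula
$e(\G_S) = \tfrac{1}{2}(3k-r(S))n - \tfrac{r(\E)}{2}|S\cap W| - |E_1\cap E_2|$
does check out (the within-family disjointness of the $E_1(s)$ and of the $E_2(s)$ is correct under $m(S)=0$), and the structure of $\HH_W$ (degree $\le |W| \le 2/\delta$, union of cliques/complete bipartite graphs on pairs of cosets of $2\E$) is accurately described.

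However, there is a genuine gap at the pivotal step. You claim that the lower bound $N_W(S) \ge c(k-s-1)n/r(\E)$ implies ``\emph{$\HH_W[S]$ has at most $s+1$ connected components}.'' This does not follow: a large edge count in $\HH_W[S]$ places no upper bound on its number of components. For instance, $S$ could consist of $k-s-1$ vertices concentrated in a single $\HH_W$-clique (contributing $\binom{k-s-1}{2} \gtrsim (k-s-1)|W|$ edges once $k-s \gg |W|$) together with $s+1$ further vertices each isolated in $\HH_W[S]$, giving $s+2$ or more components. Converting the hypothesis about $e(\G_S)$ into a single global statement about $e(\HH_W[S])$ throws away exactly the per-vertex information that makes the count go through. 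The paper instead works vertex-by-vertex: using Lemma~\ref{lem:count:Gx} (Table~\ref{table:thetable}) and Observation~\ref{obs:GxGy:exists} together with $m(S)=0$, each $v_j$ that is both outside $W$ and $\HH_W$-nonadjacent to $\{v_1,\ldots,v_{j-1}\}$ contributes at least $n-r(\O)/2$ pairwise-disjoint new edges to $\G_S$; hence there are at most $s$ such ``bad'' indices, and every remaining $v_j$ is confined to $W\cup N_{\HH_W}(S_{j-1})$, a set of size $O(k/\delta)$.

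Even granting a correct bound on components, the spanning-forest count as sketched would not yield $(n/k)^s$: you allow up to $s+1$ roots at cost $n$ each, but the correct picture has up to $s$ ``bad'' roots at cost $n$ each plus up to $|W| \le 2/\delta$ additional roots lying inside $W$, which must be charged only $|W|$ choices each, not $n$. Without separating these two kinds of roots the exponent on $n$ comes out too large. Finally, your derivation of $N_W(S) \ge c(k-s-1)n/r(\E)$ is lossy; carrying through the diagonal term $\sum_s |E_1(s)\cap E_2(s)| = r(\O)(k - r(S)/2)$ and the $|S\cap W|$ term carefully gives something closer to $N_W(S) \gtrsim (k-s-2)(n-r(\O)/2)/r(\E)$, which is vacuous for $k-s \le 2$ and must be supplemented by a trivial bound in that range. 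None of these issues is fatal to the general plan, but as written the proposal has a real logical gap at the component-count step and an unfinished, miscalibrated counting argument.
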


In order to prove Propositions~\ref{prop:edge:counts} and~\ref{prop:edge:counts:bigR}, we will first count edges in $\G_x = \G_{\{x\}}$ for each $x \in \E$, and then study the intersections between these graphs. These will depend on the parameter $r(S)$, as the reader can see from the statement. However, they will also depend on the intersection of $S$ with the set $$W = \{a+a : a \in \O\},$$ and with its Cayley graph. We will use several times the fact that $|W| = n / r(\E)$.

\subsection{Edge counts in $\G_x$}

We begin with the relatively simple task of counting the edges in the Cayley graph of a single vertex $x$. To be precise, we will prove the following lemma.

\begin{lemma}\label{lem:count:Gx}
For every $0 \neq x \in \E$,
$$e(\G_x) \, = \, n - \frac{r(\O)}{2}  - \frac{r(\E)}{2}\mathbbm{1}\big[ x \in W \big] + \left(\frac{n-r(\O)}{2}\right)\mathbbm{1}\big[ x \notin R(G) \big],$$
and $\Delta(\G_x) \le 3$.
\end{lemma}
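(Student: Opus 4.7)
I would decompose $E(\G_x)=P_x\cup Q_x$ into sum edges $P_x=\{\{y,z\}\subset\O:y\ne z,\ y+z=x\}$ and difference edges $Q_x=\{\{y,z\}\subset\O:y\ne z,\ y-z\in\{x,-x\}\}$, and then apply inclusion--exclusion. Two dichotomies control every case: whether $x\in R(G)$ (equivalently, since $x\in\E$, whether $x\in R(\E)$, which decides whether $\{x,-x\}$ has one or two elements), and whether $x\in W$ (which decides whether $2y=x$ is solvable in $\O$).

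For sum edges, $y\mapsto(y,x-y)$ is a bijection from $\O$ to ordered solutions of $y+z=x$ in $\O\times\O$ (using $x\in\E$ to see $x-y\in\O$), producing $n$ ordered pairs, of which exactly $r(\E)\cdot\mathbbm{1}[x\in W]$ have $y=z$, since the set of $y$ with $2y=x$ is empty when $x\notin W$ and otherwise a coset of $R(\E)$. Thus $|P_x|=\tfrac12(n-r(\E)\mathbbm{1}[x\in W])$. For difference edges, $z\mapsto(x+z,z)$ gives $n$ ordered solutions of $y-z=x$ in $\O\times\O$, all with $y\ne z$ since $x\ne 0$. If $x\notin R(G)$ the conditions $y-z=x$ and $z-y=x$ are disjoint, each unordered pair realises exactly one of them, and $|Q_x|=n$; if $x\in R(G)$ the two conditions coincide, each unordered pair is counted twice, and $|Q_x|=n/2$.

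The delicate step is the overlap. From $y+z=x$ and $y-z=x$ subtracting gives $2z=0$, so $z\in R(\O)$ and $y=x-z$, producing $r(\O)$ ordered solutions; the mirror condition $z-y=x$ yields another $r(\O)$. When $x\notin R(G)$ the two sub-conditions are genuinely distinct -- one forces $z\in R(\O)$ and $y\notin R(\O)$, the other the opposite assignment -- each unordered overlap pair is covered by exactly one ordered solution of each, and so $|P_x\cap Q_x|=r(\O)$. When $x\in R(G)$ the two sub-conditions coincide, both endpoints of an overlap pair lie in $R(\O)$, every unordered pair carries two ordered solutions, and so $|P_x\cap Q_x|=r(\O)/2$. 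Plugging these values into $e(\G_x)=|P_x|+|Q_x|-|P_x\cap Q_x|$ in each case and collecting the extra $(n-r(\O))/2$ that appears when $x\notin R(G)$ into a single indicator yields exactly the claimed formula. The degree bound is immediate: any neighbour $z$ of $y$ solves one of $y+z=x$, $y-z=x$, $z-y=x$, so $z\in\{x-y,\ y-x,\ y+x\}$ and $\Delta(\G_x)\le 3$.

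\textbf{Main obstacle.} The computation ultimately reduces to the bijection $y\mapsto x-y$ on $\O$; the one place where a careless argument could slip is in the overlap count, where the factor-of-two change between $r(\O)$ and $r(\O)/2$ hinges entirely on whether $\{x,-x\}$ has one or two elements and on tracking exactly which endpoint of an overlap pair carries the 2-torsion.
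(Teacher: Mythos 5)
Your proposal is correct and follows essentially the same route as the paper: both decompose $\G_x$ into sum edges $\G_x^+$ (your $P_x$) and difference edges $\G_x^-$ (your $Q_x$), and the paper's computation of $e(\G_x) = e(\G_x^-) + e(\G_x^+\setminus\G_x^-)$ via its partition $O_1,\ldots,O_4$ of $\O$ is precisely the inclusion--exclusion $|P_x|+|Q_x|-|P_x\cap Q_x|$ in disguise, with the same case split on $x\in R(G)$ and $x\in W$. Your counts of $|P_x|$, $|Q_x|$ and the overlap (and in particular the factor-of-two distinction at $r(\O)$ versus $r(\O)/2$) all check out against the four entries of Table~\ref{table:thetable}.
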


\begin{proof}
Let us denote by $\G_x^+$ the edges of the form $x = y + z$, and by $\G_x^-$ the edges of the form $x = y - z$, so $\G_x = \G_x^+ \cup \G_x^-$. Note first that the graph $\G_x^-$ has a very simple structure, since every vertex has degree either one or two. More precisely, if $x \not\in R(G)$ then it is a union of cycles, and so $e( \G_x^- ) = n$; if $x \in R(G)$ then it is a matching, and so $e( \G_x^- ) = n/2$.

In order to count the edges of $\G_x^+ \setminus \G_x^-$, let us partition the vertex set $\O$ into (up to) four parts, as follows:
\begin{itemize}
\item[$(a)$] Set $O_1 = \{ a \in \O : a+a = x\}$. If $|O_1| \neq 0$, then $x \in W$, and moreover $|O_1| = r(\E)$, since the property $a \in O_1$ is invariant under the addition of an order 2 element. Moreover $O_1$ contains no edges of $\G_x^+$, and $O_1 \cap R(\O) = \emptyset$, since $x \neq 0$.
\item[$(b)$] Set $O_2 = R(\O)$, the collection of order 2 elements in $\O$. If $x \in R(G)$ then $O_2$ induces a matching in $\G_x^+$, since $a \in R(\O)$ if and only if $b = x - a \in R(\O)$.
\item[$(c)$] Set $O_3 = \{b \in \O \setminus O_2 : x - b \in R(\O) \}$, and observe that if $x \in R(G)$ then $|O_3| = 0$ (as above), whereas if $x \not\in R(G)$ then $|O_3| = |O_2|$, since if $a \in R(\O)$ then $b = x - a \not\in R(\O)$. Moreover $\G_x^+$ contains one edge for each element of $O_3$.
\item[$(d)$] Set $O_4 = \O \setminus \big( O_1 \cup O_2 \cup O_3 \big)$, and note that $\G_x^+$ induces a perfect matching on $O_4$.
\end{itemize}
Now, observe that an edge of $\G_x^+$ is also contained in $\G_x^-$ if and only if it has an endpoint in~$R(G)$, since if $a + b = x$ then $b \in R(G)$ if and only if $a - b = x$. Therefore
$$e(\G_x) \, = \, \big( 1 + \mathbbm{1}\big[ x \not\in R(G) \big] \big) \frac{n}{2} + \frac{|O_4|}{2}$$
and
$$|O_4| \, = \, n - \mathbbm{1}\big[ x \in W \big] r(\E) - \big( 1 + \mathbbm{1}\big[ x \not\in R(G) \big]  \big) r(\O),$$
and so the lemma follows.
\end{proof}

Lemma~\ref{lem:count:Gx} has the following simple consequence,  which we shall use several times.

\begin{obs}\label{obs:edge_lower_bound} For every $0\neq x\in\E$, we have $e(\G_x) \ge \max\{n-r(G), n/2\}$. Moreover, if $0\notin S\subset \E$ satisfies $m(S) = 0$, then $e(\G_S) \ge \sum_{x \in S} e(\G_x)/2$.
\end{obs}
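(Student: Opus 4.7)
The statement splits into two separable parts. For $e(\G_x) \ge \max\{n - r(G), n/2\}$, I plan to extract each lower bound from the formula provided by Lemma~\ref{lem:count:Gx}. To obtain $e(\G_x) \ge n - r(G)$, I will simply discard the nonnegative indicator term $\frac{n-r(\O)}{2}\mathbbm{1}[x \notin R(G)]$ and bound $\mathbbm{1}[x \in W] \le 1$, yielding $e(\G_x) \ge n - r(\O)/2 - r(\E)/2 = n - r(G)/2 \ge n - r(G)$. For $e(\G_x) \ge n/2$, rather than split cases on the formula, I will invoke the structural picture appearing in the proof of Lemma~\ref{lem:count:Gx}: the subgraph $\G_x^- \subset \G_x$ of difference edges has each vertex $a \in \O$ joined to $a \pm x \in \O$, and these two neighbors are distinct unless $2x = 0$ (and never equal $a$ since $x \neq 0$). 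Hence $\G_x^-$ is either $1$-regular or $2$-regular on $\O$, so $e(\G_x) \ge e(\G_x^-) \ge n/2$.

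For the second inequality, my plan is to write $\G_S = \bigcup_{x \in S} \G_x$ and control the multiplicity with which a fixed edge of $\G_S$ is counted in $\sum_{x \in S} e(\G_x)$. An edge $\{a, b\}$ belongs to $\G_x$ precisely when $x \in \{a+b,\, a-b,\, b-a\}$. The last two entries either coincide (when $a - b \in R(G)$) or form a genuine inverse pair $\{y, -y\}$ with $y \neq -y$; in the latter case the hypothesis $m(S) = 0$ guarantees that $S$ contains at most one of the two. Either way $|S \cap \{a+b,\, a-b,\, b-a\}| \le 2$, so summing over $x \in S$ gives $\sum_{x \in S} e(\G_x) \le 2\, e(\G_S)$, which rearranges to the desired bound.

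I do not anticipate any genuine obstacle: both parts reduce to the explicit formula of Lemma~\ref{lem:count:Gx} together with a short combinatorial observation. The only subtle point is verifying that the hypothesis $m(S) = 0$ plays exactly its intended role, namely ruling out inverse pairs $\{y, -y\} \subset S$ with $y \neq -y$, while the collapsed situation $y = -y$ automatically contributes at most one label to any given edge and so needs no further hypothesis.
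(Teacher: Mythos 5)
Your argument is correct. Parts one and three mirror the paper: the bound $e(\G_x)\ge n-r(G)$ drops the $\mathbbm{1}[x\notin R(G)]$ term from Lemma~\ref{lem:count:Gx} and uses $r(\O)+r(\E)=r(G)$, and the multiplicity bound comes from noting that $\{x\in S:\{a,b\}\in E(\G_x)\}=S\cap\{a+b,\,a-b,\,b-a\}$ has at most two elements when $m(S)=0$, since $\{a-b,\,b-a\}$ is an inverse pair (or a singleton if $a-b\in R(G)$). For the bound $e(\G_x)\ge n/2$, however, you take a slightly different and arguably cleaner route: the paper deduces it from the formula of Lemma~\ref{lem:count:Gx} via a case split, observing that either $r(\O)\le r(\E)\le n/2$ (using the group-theoretic fact $r(\O)\le r(\E)$ together with $r(\E)\mid n$) or $r(\E)=n$ and $|W|=1$; you instead appeal directly to the degree structure of the difference subgraph $\G_x^-$, which appears inside the proof of Lemma~\ref{lem:count:Gx}: each $a\in\O$ has neighbours $a\pm x\in\O$, distinct from $a$ since $x\neq0$, so $\G_x^-$ is $1$- or $2$-regular and $e(\G_x)\ge e(\G_x^-)\ge n/2$. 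Your variant avoids the auxiliary group facts and is self-contained; the paper's variant is a one-liner given that Lemma~\ref{lem:count:Gx} has already been stated. Both are valid.
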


\begin{proof}
If $x \neq 0$, Lemma~\ref{lem:count:Gx} implies that
\begin{equation*}
e(\G_x) \ge n - \frac{r(\O)}{2} - \frac{r(\E)}{2} \mathbbm{1}[x \in W]
\end{equation*}
and, in particular, $e(\G_x) \ge n - r(G)$. In addition, either $r(\O) \le r(\E) \le n/2$ or $|W| = n/r(\E) = 1$, and so $e(\G_x) \ge n/2$. Further, when $m(S) = 0$, the set $\{x \in S : \{a, b\} \in E(G_x)\}$ contains at most two elements for any edge $\{a,b\}$.
\end{proof}

Before continuing to the proof of Proposition~\ref{prop:edge:counts}, let us note how to obtain (heuristically) the function $\lambda^{(\delta)}(G)$ from Lemma~\ref{lem:count:Gx}. We call an element $0 \ne x \in \E$ \emph{safe} if $(A \cap \O) \cup \{x\}$ is sum-free, and let $S^\E(A)$ denote the collection of safe elements in~$\E$. Note that an element $x \in \E$ is safe if\footnote{This is only true if we ignore sums of the form $x = y + y$. However, such sums will never play a significant role in any of the calculations below.} and only if $A \cap \O$ is an independent set in $\G_x$.

We need one more definition, whose slightly odd appearance will be motivated by the lemmas below.

\begin{defn}
A subgroup $\E \subset G$ is \emph{nice} if either $r(G) \le \delta n$ or $r(\O) = r(\E)$.
\end{defn}

The next lemma says that almost all index~$2$ subgroups are nice.

\begin{lemma}
$G$ has at most $2/\delta$ index $2$ subgroups that are not nice.
\end{lemma}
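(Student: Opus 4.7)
If $r(G) \le \delta n$ there is nothing to prove, so I would assume $r(G) > \delta n$. Using the fact that $r(\E)$ is always either $r(G)$ or $r(\O)$, a subgroup $\E$ of index 2 is non-nice exactly when $r(\E) = r(G)$, i.e. when $\E$ contains every element of $R(G)$. The task thus reduces to counting the index-2 subgroups of $G$ that contain $R(G)$, under the constraint $r(G) > \delta n$.

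I would then apply the structure theorem (Fact~\ref{fa:classification}) to write $G \cong \Z_{2^{a_1}} \oplus \cdots \oplus \Z_{2^{a_k}} \oplus J$ with $1 \le a_1 \le \cdots \le a_k$ and $|J|$ odd. The elements of $R(G)$ are precisely the tuples $(x_1, \ldots, x_k, 0)$ with $x_j \in \{0, 2^{a_j-1}\}$, so $r(G) = 2^k$. Setting $L := \{j : a_j = 1\}$ and $\ell := |L|$, the key observation is that the reduction of $2^{a_j-1}$ modulo $2$ equals $1$ if $a_j = 1$ and $0$ if $a_j \ge 2$. Combined with the explicit formula $H_I = \{x : \sum_{i \in I} x_i \equiv 0 \pmod 2\}$ from Fact~\ref{mainfact}, this shows that $R(G) \subset H_I$ if and only if $I \cap L = \emptyset$. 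Hence the non-nice index-2 subgroups are in bijection with the non-empty subsets of $[k] \setminus L$, and there are exactly $2^{k-\ell} - 1$ of them.

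Finally, I would compare this count to $r(G)/n$. Since $a_j = 1$ for $j \in L$ and $a_j \ge 2$ otherwise, we have $\sum_j a_j \ge \ell + 2(k-\ell) = 2k - \ell$, whence $n = |G|/2 \ge 2^{2k - \ell - 1}$ and $r(G)/n \le 2 \cdot 2^{-(k-\ell)}$. The hypothesis $r(G) > \delta n$ then yields $2^{k-\ell} < 2/\delta$, giving at most $2/\delta$ non-nice subgroups. The only mildly subtle step is the parity observation classifying which $H_I$ contain $R(G)$; once that is in hand, the bound falls out mechanically from the structure theorem and the inequality $|G| \ge 2^{2k-\ell}$.
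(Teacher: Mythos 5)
Your proof is correct and follows essentially the same route as the paper: both invoke Facts~\ref{fa:classification} and~\ref{mainfact} to reduce to counting the index-$2$ subgroups $H_I$ with $I$ disjoint from the set of $\Z_2$-summand indices, and both use $|G| \ge 2^{\sum_j a_j}$ together with $r(G) > \delta n$ to bound that count by $2/\delta$. The only difference is presentational: you identify the non-nice subgroups via the explicit parity test $R(G) \subset H_I \iff I \cap L = \emptyset$, whereas the paper characterizes them by isomorphism type, observing that $\E \cong \Z_2^{k-1} \oplus H$ forces $r(\O) = r(\E)$.
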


\begin{proof}
Clearly if $r(G) \le \delta n$ then all subgroups are nice, so let us assume $r(G) \ge \delta n$. By Fact~\ref{fa:classification}, we can write $G\cong \Z_2^k\oplus H$, where $H=\Z_{2^{a_1}} \oplus \ldots \oplus \Z_{2^{a_\ell}}\oplus J$ with $2 \le a_1 \le \cdots \le a_\ell$ and $|J|$ odd. Since $r(G)=2^{k+\ell}$ and $|G|\ge 2^{k+2\ell}$, Fact~\ref{mainfact} implies that there are at most $2^{\ell}\le 2 / \delta$ subgroups $\E \subset G$ of index $2$ that are not isomorphic to $\Z_2^{k-1} \oplus H$. But if $\E \cong \Z_2^{k-1} \oplus H$, then $r(\O)=r(\E)$, as required.
\end{proof}

We now prove the following bound on the expected number of safe elements, which we will use in the proof of the 0-statement of Theorem~\ref{thm:even:abelian}.

\begin{lemma}\label{lem:expected:safe}
If $\frac{\log n}{n} \ll p \le \big(1 - \eps \big) \sqrt{ \lambda^{(\delta)}(G) \frac{\log n}{n} }$ and $\E$ is nice, then
$$\Ex\big[ |S^\E(A)| \big] \, \gg \, \frac{\log n}{p}.$$
\end{lemma}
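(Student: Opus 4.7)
\begin{psketch}
Since $\O$ is itself sum-free (as $\O+\O\subset\E$), an element $x\in\E\setminus\{0\}$ is safe precisely when no sum inside $(A\cap\O)\cup\{x\}$ involves $x$. Checking cases, this unfolds into two monotone decreasing events in $A$: (i)~$A\cap\O$ is an independent set in $\G_x$, and (ii)~$A\cap\{a\in\O:a+a=x\}=\emptyset$ (trivial if $x\notin W$). The FKG inequality therefore yields
$$\Pr\bigl(x\text{ is safe}\bigr)\,\ge\,(1-p)^{r(\E)\mathbbm{1}[x\in W]}(1-p^2)^{e(\G_x)}\,\ge\,\exp\bigl(-(1+o(1))\bigl(p\,r(\E)\mathbbm{1}[x\in W]+p^2 e(\G_x)\bigr)\bigr).$$
The plan is, in each of the three cases defining $\lambda^{(\delta)}(G)$, to isolate a large subset $X^\E\subset\E\setminus\{0\}$ on which Lemma~\ref{lem:count:Gx} keeps $e(\G_x)$ small, sum the above estimate over $X^\E$, and compare the total with $\log n/p$.

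When $\alpha(G)\le 5/6$ (so $\lambda^{(\delta)}(G)=1/3$), I take $X^\E=\E\setminus(R(G)\cup\{0\})$, of size $(1-o(1))n$ since $r(\E)\le n^{5/6}$, and Lemma~\ref{lem:count:Gx} gives $e(\G_x)\le 3n/2$ uniformly. The $(1-p)^{r(\E)}$ correction for $x\in W$ reduces the effective count by at most $\min(n/r(\E),n p\, r(\E))\le\min(n/r(\E),np)$, and both are $o(n)$ in this regime since $p=o(1)$. When $\alpha(G)>5/6$ and $\beta(G)<\delta$ (so $\lambda^{(\delta)}(G)=\alpha-1/2$), the final fact of Section~\ref{sec:preliminaries} forces $r(\E)\in\{r(G),r(G)/2\}$, so $r(\E)\ge n^\alpha/2$ whereas $|W|=n/r(\E)\le 2n^{1-\alpha}\ll r(\E)$; I take $X^\E=R(\E)\setminus W$, of size $(1-o(1))r(\E)$, on which $e(\G_x)=n-r(\O)/2\le n$. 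When $\beta(G)\ge\delta$ (so $\lambda^{(\delta)}(G)=2/(4-\beta)$), niceness forces $r(\O)=r(\E)=\beta n/2$ and $|W|=2/\beta=O(1)$, so again $X^\E=R(\E)\setminus W$ has size $\ge\beta n/3$ and $e(\G_x)=n(4-\beta)/4$.

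Substituting the hypothesis $p^2\le(1-\eps)^2\lambda^{(\delta)}(G)\log n/n$ converts $\exp(-p^2 e(\G_x))$ into $n^{-\eta}$ for an explicit exponent $\eta$, and multiplying by $|X^\E|$ yields lower bounds of the form $n^{1-(1-\eps)^2/2-o(1)}$, $\tfrac{1}{4}n^{\alpha-(1-\eps)^2(\alpha-1/2)-o(1)}$ and $\tfrac{\beta}{3}n^{1-(1-\eps)^2/2-o(1)}$ in the three cases. A short rearrangement writes each exponent as $1/2$ plus a positive constant proportional to $\eps(2-\eps)=1-(1-\eps)^2$ (times $\alpha-1/2\ge 1/3$ in the middle case); this comfortably dominates $\log n/p\le O(\sqrt{n\log n})$ at the top of the $p$-range. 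For smaller $p$ the bound is immediate, since $p^2 e(\G_x)=o(1)$ makes $\Ex[|S^\E(A)|]=(1-o(1))|X^\E|=\Theta(n)$, and $n\gg\log n/p$ follows from the hypothesis $pn\gg\log n$.

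The delicate point is the middle case: the value $\lambda^{(\delta)}(G)=\alpha-1/2$ is tuned precisely so that the contribution of $R(\E)$ (rather than of all of $\E$) drives the expectation, and this only works because the group-theoretic fact at the end of Section~\ref{sec:preliminaries} guarantees $r(\E)\ge r(G)/2=n^\alpha/2$, not merely $O(1)$. Once this structural input is in hand, the three cases reduce to routine substitutions into the FKG bound.
\end{psketch}
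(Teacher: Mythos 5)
Your approach is essentially the paper's: bound $\Pr(x\text{ safe})$ from below via FKG, plug in Lemma~\ref{lem:count:Gx}, and sum over a suitable set of $x$. You are in fact slightly more careful than the paper in treating the extra decreasing event $A\cap\{a\in\O:a+a=x\}=\emptyset$ explicitly (the paper dismisses it in a footnote), and your estimate for that correction is fine. Splitting into the three regimes of $\lambda^{(\delta)}(G)$ rather than the paper's two cases $r(G)\le\delta n$ and $r(G)\ge\delta n$ is a cosmetic difference.

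There is, however, a genuine gap in your middle case ($\alpha(G)>5/6$ and $\beta(G)<\delta$). You take $X^\E=R(\E)\setminus W$, so $|X^\E|=\Theta(r(\E))=\Theta(n^{\alpha(G)})$ with $\alpha(G)<1$, and then assert that for small $p$ one gets $\Ex\big[|S^\E(A)|\big]=(1-o(1))|X^\E|=\Theta(n)$. The last equality is false in this case, and the conclusion actually fails near the bottom of the $p$-range: if $p=\omega\cdot\frac{\log n}{n}$ with $\omega\to\infty$ slowly, you need $\Ex\big[|S^\E(A)|\big]\gg\frac{\log n}{p}=\frac{n}{\omega}$, but your lower bound is only $\approx n^{\alpha(G)}$, which is $\ll n/\omega$ whenever $\omega\ll n^{1-\alpha(G)}$. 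The contribution from $R(\E)$ is the right thing to look at near the top of the $p$-range, but near the bottom you must also count the $(1-o(1))n$ elements of $\E\setminus R(G)$, which contribute roughly $(n-r(G))e^{-p^2\cdot 3n/2}$. This is precisely why the paper keeps \emph{both} terms in the bound $\Ex\big[|S^\E(A)|\big]\ge r(\E)e^{-p^2(n+o(n))}+(n-r(\E))e^{-p^2(3n/2+o(n))}$. A related, smaller issue: checking only the top of the range and the regime $p^2n=o(1)$ leaves the intermediate range $p^2n=\Theta(1)$ to $\Theta(\log n)$ uncovered; the paper closes this by appealing to unimodality of $x\mapsto xe^{-cx^2}$, and you should do the same.
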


\begin{proof}
Suppose first that $r(G) \le \delta n$, and to simplify the notation let us write $\delta = o(1)$ (as noted above, we may assume that this holds as $n \to \infty$), and thus $r(G) = o(n)$. It follows from Lemma~\ref{lem:count:Gx} that
\begin{equation}\label{eq:edges:rGsmall}
e(\G_x) = \left\{
\begin{array} {c@{\quad \textup{if} \quad}l}
n + o(n) & x \in R(G) \\[+1ex]
3n/2 + o(n) & x \not\in R(G).
\end{array}\right.
\end{equation}
Now, by the FKG inequality, the expected number of safe elements $x \in \E$ is at least
$$\Ex[|S^\E(A)|] \, \ge \, \sum_{x \in \E} \big( 1 - p^2 \big)^{e(\G_x)} \ge \, r(\E) e^{-p^2 ( n + o(n) )} + \big( n - r(\E) \big) e^{-p^2 ( 3n/2 + o(n) )} \, \gg \, \frac{\log n}{p}.$$
To see the final step, it suffices to check that the claimed inequality holds at the endpoints of the claimed range of $p$, since $x e^{-cx^2}$ is unimodal. At the lower end this is immediate; at the upper end, note that $e^{-p^2n} \ge n^{(1-\eps)^2 \lambda^{(\delta)}(G)}$ and $r(\E) = n^{\alpha(G) + o(1)}$, and that
$$\max\bigg\{\alpha(G) - \lambda^{(\delta)}(G) , \, 1 - \frac{3\lambda^{(\delta)}(G)}{2} \bigg\} \, = \, \frac{1}{2},$$
since $\lambda^{(\delta)}(G) = \max\big\{ 1/3, \alpha(G) - 1/2 \big\}$.

When $r(G) \ge \delta n$, the (asymptotic) number of edges of $\G_x$ depends on both whether $x \in R(G)$ and whether $x \in W$. Indeed, the following table summarizes the content of Lemma~\ref{lem:count:Gx}.
\begin{table}[H]
\begin{tabular}{| c | c | c |} \cline{2-3} \multicolumn{1}{c|}{}
 &&\\[-2ex]
\multicolumn{1}{c|}{} & $x \in R(G)$ & $x \notin R(G)$ \\[+0.3ex] \hline
   &&\\[-2.2ex]
     $x \in W$ & $\displaystyle n - \frac{r(\O)}{2} - \frac{r(\E)}{2}$ & $\displaystyle \frac{3n}{2} - r(\O) - \frac{r(\E)}{2}$ \\[+1.5ex]  \hline
      &&\\[-2ex]
        $x \not\in W$ & $\displaystyle n - \frac{r(\O)}{2}$ & $\displaystyle \frac{3n}{2} - r(\O)$ \\[+1.5ex] \hline
\end{tabular}
\caption{Summary of Lemma \ref{lem:count:Gx}}
\label{table:thetable}\end{table}
\noindent Fortunately, however, $|W| = n / r(\E) = O(1/\delta)$.  We can therefore easily deduce a lower bound on $\Ex[|S^\E(A)|]$ for nice subgroups. Indeed, since $r(\O) = r(\E) = \beta(G) n/2$, and again using the unimodality of $x e^{-c x^2}$, it follows from Table \ref{table:thetable} above that
\begin{equation}\label{eq:safe:hypercube}
\Ex\big[ |S^\E(A)| \big] \, \ge \, \sum_{x \in R(\E)} \big( 1 - p^2 \big)^{e(\G_x)} = \, \Omega \Big( r(\E) e^{-p^2 ( n - r(\O)/2 )} \Big)  \, \gg \, \frac{\log n}{p},
\end{equation}
as required, where the last step follows since $1 - \big( 1 - \beta(G)/4 \big) \lambda^{(\delta)}(G) = 1/2$.
\end{proof}

\subsection{Intersections between the graphs $\G_x$ and edge counts in $\G_S$}

We now return to the proof of Proposition~\ref{prop:edge:counts}. In order to deduce the claimed bounds on $e(\G_S)$, we will need to control the size of the intersections between different graphs $\G_x$. Recall that we have fixed an odd coset $\O \in \SF(G)$, and that $W = \{a+a : a \in \O\}$. The following observation is key.

\begin{obs}\label{obs:GxGy:exists}
Let $x,y \in \E$ with $x \not\in \{y,-y\}$. If $E(\G_x) \cap E(\G_y) \neq \emptyset$, then $x+y \in W$.
\end{obs}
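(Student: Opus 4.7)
The proof will be a short case analysis expanding the definition of $\G_x$ and $\G_y$. Suppose $\{a,b\}$ is a common edge. By the definition of the Cayley graph (and since the edge is unordered), the fact that $\{a,b\} \in E(\G_x)$ means one of three equations holds: $a+b=x$, $a-b=x$, or $b-a=x$. Similarly for $\{a,b\} \in E(\G_y)$. The plan is to go through the $3 \times 3$ grid of combinations and verify that the hypothesis $x \notin \{y,-y\}$ rules out exactly the ``parallel'' cases, leaving only cases in which $x+y$ is forced to equal $2a$ or $2b$.

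More concretely, I will observe the following. If the ``sum'' equation holds for both (so $a+b=x=y$) or the same ``difference'' equation holds for both, we get $x=y$, which is excluded. If opposite ``difference'' equations hold (i.e.\ $a-b=x$ and $b-a=y$, or vice versa), we get $x=-y$, also excluded. In each of the remaining four combinations---where one of the two equations is the sum and the other is a difference---summing them yields either $x+y = 2a$ or $x+y = 2b$. Since $a,b \in \O$, both $2a = a+a$ and $2b = b+b$ lie in $W = \{c+c : c \in \O\}$, giving the conclusion.

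No real obstacle is anticipated; the whole argument is a direct unpacking of definitions and can be written in a few lines. The one minor point to be careful about is treating the difference edges symmetrically, since $\{a,b\}$ is unordered and $a-b \in S$ is equivalent (as an edge condition) to $b-a \in -S$; this is why three rather than two equations have to be considered for membership in $E(\G_x)$. Once this is spelled out, the case check is mechanical.
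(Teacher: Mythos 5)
Your proof is correct and takes essentially the same approach as the paper; the paper compresses your explicit $3\times 3$ case check into a single ``without loss of generality, $a+b=x$ and $a-b=y$'' (exploiting the $a\leftrightarrow b$ and $x\leftrightarrow y$ symmetries), but the underlying argument is identical.
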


\begin{proof}
Suppose the edge $\{a,b\}$ lies in both $\G_x$ and $\G_y$. Then, without loss of generality, we have $a + b = x$ and $a - b = y$, and so $x + y = a + a$, as claimed.
\end{proof}

Moreover, we can bound the size of each intersection.

\begin{obs}\label{obs:GxGy:size}
$\big| E(\G_x) \cap E(\G_y) \big| \le 2 \cdot r(\E)$ for every $x,y \in \E$ with $x \not\in \{y,-y\}$.
\end{obs}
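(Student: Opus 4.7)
My plan is a short case analysis on which defining relation of each Cayley graph the edge $\{a,b\}$ satisfies. Membership in $\G_x$ forces $a+b = x$ or $a - b \in \{x, -x\}$, and analogously for $\G_y$. The hypothesis $x \notin \{y,-y\}$ immediately rules out both the sum-sum case (which would force $x = y$) and the difference-difference case (which would force $x = \pm y$), leaving exactly two mixed cases: either $a+b = x$ with $a - b \in \{y, -y\}$, or $a+b = y$ with $a - b \in \{x, -x\}$. Since $a+b$ takes distinct values in the two cases, they contribute disjoint sets of edges, and by the symmetric roles of $x$ and $y$ it suffices to bound the first case by $r(\E)$.

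For that case, solving the two linear equations yields $2a \in \{x+y, x-y\}$, so $a$ must lie in one of two fibres of the doubling map restricted to $\O$. The key finite-counting input is that each nonempty such fibre is a coset of $R(\E)$ in $\O$, and therefore has size exactly $r(\E)$; indeed if $2a = 2a' = w$ then $a - a' \in \E$ satisfies $2(a-a') = 0$. Hence there are at most $2 r(\E)$ admissible values of $a$, and these split into two fibres that are interchanged by the involution $a \leftrightarrow x - a$ on $\O$, since $2(x-a) = x - y$ whenever $2a = x+y$, and vice versa. As this involution simultaneously swaps the two endpoints $a$ and $b = x - a$ of each edge, the case contributes at most $r(\E)$ unordered edges, yielding $2 r(\E)$ in total.

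The one subtlety is the degenerate situation $y \in R(\E)$, where $x + y = x - y$ and the two fibres collapse into one; the involution then merely permutes this single fibre rather than swapping two disjoint ones, and the count drops to $r(\E)/2$, still within the claimed bound. I do not foresee any serious obstacle: the whole statement reduces to a clean bookkeeping exercise once the right case split has been made, and the only delicate point is making sure that the ordered/unordered edge accounting is consistent across the degenerate and non-degenerate sub-cases.
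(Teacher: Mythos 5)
Your argument is correct. The core algebraic fact it uses --- that a valid endpoint $a$ necessarily satisfies $2a \in \{x+y, x-y\}$, and that each nonempty fibre of the doubling map $a \mapsto a+a$ on $\O$ is a coset of $R(\E)$ --- is the same one the paper relies on, but the bookkeeping is organised differently. The paper normalizes every edge (by swapping $a \leftrightarrow b$) so that $\{a+b,\,a-b\} = \{x,y\}$, thereby merging your two cases into one; it then fixes a reference edge $\{a,b\}$ in the intersection and counts the other edges $\{c,d\}$ by $c - a \in R(\E)$ (giving a factor $r(\E)$) and $d \in \{x-c,\,y-c\}$ (giving a factor $2$). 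You instead split the intersection directly by the value of $a+b \in \{x,y\}$, count each half by fibre size, and handle the ordered-versus-unordered distinction explicitly via the involution $a \leftrightarrow x - a$. Your route is a bit more self-contained (no reference edge, no implicit relabeling) and makes the degenerate $y \in R(\E)$ sub-case visible; the paper's is a touch shorter at the cost of leaving that bookkeeping implicit. Both decompositions yield the bound $2 r(\E)$, so there is no gap.
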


\begin{proof}
  Consider $\{a,b\}, \{c,d\} \in E(\G_x) \cap E(\G_y)$. Since $x \not\in \{y,-y\}$, we may assume that $\{a+b,a-b\}=\{x,y\}=\{c+d,c-d\}$. It follows that $a + a = x + y = c + c$, and thus $c - a \in R(\E)$. Moreover $d \in \{x-c, y-c\}$, and therefore, given $\{a,b\}$, there are at most $2 \cdot r(\E)$ choices for $\{c,d\}$, as claimed.
\end{proof}

Let us denote by $\HH_W$ the graph on vertex set $\E$ with edge set $\{xy : x+y \in W \}$, and note that we have $\Delta\big( \HH_W \big) \le d$, where $d := |W| = n/r(\E)$. By Observations~\ref{obs:GxGy:exists} and~\ref{obs:GxGy:size}, we have
\begin{equation}\label{eq:EGxEGy}
\sum_{x,y \in S, \, x \ne y} \big| E(\G_x) \cap E(\G_y) \big| \, \le \, 2 \cdot r(\E) \cdot e(\HH_W[S])
\end{equation}
for every $S \subset \E$ with $m(S) = 0$. Since, by Lemma~\ref{lem:count:Gx}, we have good bounds on the sum of $e(\G_x)$ over $x \in S$, the following lemma is all we need to complete the proof of Proposition~\ref{prop:edge:counts}.

\begin{lemma}\label{lemma:choices_for_s}
For every $\delta \le a \le 1/2$, there are at most $\big(6/\delta^2\big)^k\big( n / k \big)^{k - (1 - \delta) a k}$ sets $S \subset \E$ with $|S| = k$ and
\begin{equation}\label{eq:edges:HU}
e\big( \HH_W[S] \big) \,\ge\, \frac{akn}{r(\E)}.
\end{equation}
\end{lemma}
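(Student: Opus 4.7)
\begin{psketch}
The plan is a direct encoding argument that exploits the $d$-regularity of $\HH_W$, where $d = |W| = n/r(\E)$. Given $S \subset \E$ with $|S| = k$ and $e(\HH_W[S]) \ge akd$, for any ordering $s_1, \ldots, s_k$ of $S$ let $X_i$ be the number of indices $j < i$ with $s_j s_i \in E(\HH_W)$. Then $\sum_i X_i = e(\HH_W[S]) \ge akd$ and each $X_i \le d$, so at least $ak$ positions are \emph{linked} (have $X_i \ge 1$). Keeping the first $\lceil (1-\delta)ak \rceil$ such positions produces a \emph{witness set} $T \subset S$, whose complement $F := S \setminus T$ has size $k-(1-\delta)ak$, and such that each element of $T$ has a back-neighbor in $F \cup \{\text{earlier elements of } T\}$ (any linked position earlier than one in $T$ must itself lie in $T$, since $T$ consists of the earliest linked positions).

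Next I would encode $S$ as the data $\big(F,(t_1,\ldots,t_{(1-\delta)ak})\big)$, where for each linked element $t_i$ we additionally record a back-neighbor (a predecessor in $F \cup \{t_1,\ldots,t_{i-1}\}$ together with an $\HH_W$-edge out of it). The number of such encodings is at most
$$\binom{n}{k-(1-\delta)ak} \cdot (kd)^{(1-\delta)ak},$$
and each set $S$ admits at least $((1-\delta)ak)!$ of them. Applying $\binom{n}{q} \le (en/q)^q$ and $q! \ge (q/e)^q$, and then bounding the resulting $e/(1-(1-\delta)a)$ and $e/((1-\delta)a)$ factors using $a \in [\delta, 1/2]$ together with the entropy inequality $\alpha^{-\alpha}(1-\alpha)^{-(1-\alpha)} \le 2$, this simplifies to
$$(2e)^k \left(\frac{n}{k}\right)^{k-(1-\delta)ak} d^{(1-\delta)ak}.$$

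The main obstacle is to absorb the residual factor $d^{(1-\delta)ak}$ into the prefactor $(6/\delta^2)^k$. The crucial input is that whenever the lemma is non-vacuous one has $akd \le e(\HH_W[S]) \le \binom{k}{2}$, which forces $d \le k/(2a) \le k/(2\delta)$, and therefore $d^{(1-\delta)ak} \le (k/(2\delta))^{(1-\delta)ak}$. In the regime where $k$ is not too large compared to $1/\delta$ this bound, combined with the Stirling constants, fits comfortably inside the budget $(6/\delta^2)^k$; in the complementary regime where $k$ is close to $n$, the trivial bound $\binom{n}{k} \le (en/k)^k$ already lies below the target since the factor $(n/k)^{(1-\delta)ak}$ is then small enough to be absorbed by $(6/\delta^2)^k$ alone. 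A short case analysis reconciling these two regimes---supplemented if necessary in an intermediate range by a sharper encoding via a high-degree vertex $v \in S$ of $\HH_W[S]$ (which must satisfy $\deg_{\HH_W[S]}(v) \ge 2ad$) together with a subset of $N_{\HH_W}(v)$---completes the proof. The final constant $6/\delta^2$ reflects the accumulated Stirling overhead together with the factor $(k/(2\delta))^{(1-\delta)ak}$ inherited from the substitution $d \le k/(2\delta)$.
\end{psketch}
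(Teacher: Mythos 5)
Your skeleton mirrors the paper's: order $S$ as a sequence, track the number of back-edges at each position, isolate roughly $(1-\delta)ak$ special positions, bound the choices for each special position, and divide out the overcounting. The crucial divergence — and where the argument breaks — is in what you declare "special." You flag a position as \emph{linked} as soon as it has \emph{at least one} back-edge, which gives you at most $kd$ choices for each such position (a back-neighbour among the $\le k$ earlier elements, times the $\le d$ edges of $\HH_W$ out of it). The paper instead calls a position \emph{high-degree} only when it has at least $\delta a d$ back-edges. That threshold is exactly what eliminates the degree of $\HH_W$ from the count: since at most $kd$ edges of $\HH_W$ leave the already-chosen set, a vertex with $\ge \delta a d$ of them is one of at most $kd/(\delta a d) = k/(\delta a)$ candidates. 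The paper's per-vertex factor $k/(\delta a)$ is independent of $d$; yours, $kd$, is not.

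The residual $d^{(1-\delta)ak}$ factor this leaves you with is a genuine gap, not a technicality. Even after invoking $d \le k/(2a)$ (which is correct when the lemma is non-vacuous), what remains is a factor of roughly $\big(k/(2a)\big)^{(1-\delta)ak}$, and for this to fit inside $(6/\delta^2)^k$ you need $\log k = O(\log(1/\delta))$, i.e.\ $k = \delta^{-O(1)}$. Your complementary "$k$ close to $n$" regime, where $\binom{n}{k} \le (en/k)^k$ suffices, requires $n/k = \delta^{-O(1)}$, i.e.\ $k = \Omega(\delta^{O(1)} n)$. For fixed small $\delta$ and large $n$ these two regimes leave out essentially all intermediate $k$ (for instance $k \approx \sqrt{n}$), and the suggested patch — a single high-degree vertex $v$ of $\HH_W[S]$ together with a subset of $N_{\HH_W}(v)$ — only discharges one vertex's worth of structure and does not deliver a bound of the required shape. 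There is also a secondary soft spot: the assertion that each admissible $S$ admits at least $\big((1-\delta)ak\big)!$ encodings is not justified, because the order of the linked block is tied to the ordering of $S$ (a later linked element may have its only back-neighbour among earlier linked elements), so the linked elements cannot be freely permuted; the cleaner route, as in the paper, is to count ordered $k$-tuples and divide by $k!$ at the end. In short, the ordering/back-edge framework you set up is sound, but you need the paper's quantitative notion of high degree (threshold $\delta a d$, not $1$) to make the per-vertex count $d$-free; without it the budget $(6/\delta^2)^k(n/k)^{k-(1-\delta)ak}$ is not met.
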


\begin{proof}
We shall first bound the number of sequences $(v_1, \ldots, v_k) \in \E^k$ such that the set $S = \{v_1, \ldots, v_k\}$ satisfies $|S| = k$ and~\eqref{eq:edges:HU}. Given such a sequence, let us say (for each $j \in [k]$) that the vertex $v_j$ is of `low degree' if it is connected (by edges of $\HH_W$) to fewer than $\delta a d = \delta a n / r(\E)$ vertices of the set $\{v_1, \ldots, v_{j-1}\}$, and say it is of high degree otherwise.

Since $\Delta(\HH_W) \le d$, it follows from~\eqref{eq:edges:HU} that in each such sequence there must be at least $(1 - \delta)ak$ high-degree vertices, since the low-degree vertices contribute fewer than $\delta akd$ edges. Moreover, since there are at most $(j-1)d < kd$ edges of~$\HH_W$ leaving the set $\{v_1, \ldots, v_{j-1}\}$, there are at most $k / \delta a$ choices for a high-degree vertex, given the collection of vertices which have already been chosen.

Now, given a set $J \subset [k]$ of size at least $(1-\delta)ak$, corresponding to the positions of vertices which are required to have high degree, there are at most
$$\left(\frac{k}{\delta a}\right)^{|J|}n^{k - |J|}$$
possible sequences, and this value is maximised when $|J|$ is minimised. Therefore, considering all possible choices for $J$, it follows that there are at most
$$2^k  \left(\frac{k}{\delta a}\right)^{(1-\delta)ak}n^{k - (1-\delta)ak}$$
sequences with the desired properties.

Finally, note that each set appears exactly $k!$ times as a sequence, and therefore the number of sets $S \subset \E$ with $|S| = k$ satisfying~\eqref{eq:edges:HU} is at most
$$\bigg( \frac{2e}{k} \bigg)^k \left(\frac{k}{\delta a}\right)^{(1-\delta)ak} n^{k - (1-\delta)ak} \, \le \, \bigg(\frac{2e}{\delta^2}\bigg)^k\bigg( \frac{n}{k}  \bigg)^{k - (1 - \delta) a k},$$
since $a \ge \delta$, as required.
\end{proof}

We are now ready to prove the two propositions.

\begin{proof}[Proof of Proposition~\ref{prop:edge:counts}]
Let $0 \not\in S \subset \E$ with $|S| = k$ and $m(S) = 0$. By Lemma~\ref{lem:count:Gx} and~\eqref{eq:EGxEGy}, and noting that $|W| = n / r(\E)$, we have
\begin{align*}
e(\G_S) & \, \ge \, \sum_{x \in S} \bigg( n - \frac{r(\O)}{2}  - \frac{r(\E)}{2}\mathbbm{1}\big[ x \in W \big] + \left(\frac{n-r(\O)}{2}\right)\mathbbm{1}\big[ x \notin R(G) \big] \bigg) - 2 \cdot r(\E) e(\HH_W[S])\\
& \, \ge \, k \big( n - r(G) \big) + \left(\frac{n-r(\O)}{2}\right) \big( k - r(S) \big) - 2 \cdot r(\E) e(\HH_W[S])\\
& \, \ge \, \bigg( \frac{3k - r(S)}{2} \bigg) n - O\big( r(G) \cdot k^2 \big),
\end{align*}
as required, and the upper bound follows similarly. Moreover, the same calculation implies that if $e(\G_S) \le \big( \frac{3k - r(S)}{2} - ak \big) n$ and $r(G) \le \delta n$, then
$$e\big( \HH_W[S] \big) \, \ge \, \frac{(a - 3\delta/4)kn}{2 \cdot r(\E)},$$
and by Lemma~\ref{lemma:choices_for_s} there are at most $\big(6/\delta^2\big)\big( n / k \big)^{k - (a/2 - \delta) k}$ such sets $S \subset \E$ with $|S| = k$.
\end{proof}

\begin{proof}[Proof of Proposition~\ref{prop:edge:counts:bigR}]
The proof is similar to that of Lemma~\ref{lemma:choices_for_s}, but for completeness we give the details. We will count sequences $(v_1,\ldots,v_k) \in \E^k$ such that the set $S = \{v_1, \ldots, v_k\}$ satisfies $|S| = k$ and~\eqref{eq:prop:edgecounts:bigR}. Let $S_j=\{v_1,\ldots,v_j\}$, and observe that, since $m(S) = 0$, each $0 \ne x \not\in W$ that sends no edges of $\HH_W$ into $S_j$ adds at least $n - r(\O)/2$ edges to $\G_S$, by Lemma~\ref{lem:count:Gx} (see Table \ref{table:thetable}) and Observation~\ref{obs:GxGy:exists}. There are therefore at most $s$ such `bad' vertices, since $e(\G_s)<(s+1)(n-r(\O)/2)$.

Now, since $\Delta(\HH_W) \le |W| = n / r(\E) \le 2/\delta$ and $|S_j| = j < k$, it follows that there are at most $2k/\delta$ vertices in $W \cup N_{\HH_{W}}(S_j)$, and hence at most this many choices for each `good' vertex. Note that there are at most $2^k$ choices for the indices~$j$ such that $v_j$ is bad, and each set $S$ is counted $k!$ times as a sequence. Thus, the number of sets $0 \not\in S \subset \E$ with $|S| = k$ satisfying~\eqref{eq:prop:edgecounts:bigR} is at most
$$\frac{2^k}{k!} \cdot \bigg( \frac{2k}{\delta} \bigg)^{k-s} n^s \, \le \, \bigg(\frac{4e}{\delta}\bigg)^k\bigg(\frac{n}{k}\bigg)^s,$$
as claimed.
\end{proof}

\section{Proof of the 0-Statement}\label{sec:zero:statement}

In this section we will prove that if $A \subset G$ is a $p$-random set and
\begin{equation}\label{eq:pbounds:zerostatement}
\frac{\log n}{n} \, \ll \, p \le \big(1 - \eps \big) \sqrt{ \lambda^{(\delta)}(G) \frac{\log n}{n} },
\end{equation}
then $A \cap \O \not\in \SF(A)$ for every $\O \in \SF(G)$ with high probability as $n \to \infty$. The main step will be proving the following proposition.\footnote{We remark that the bound $1/n^2$ could easily be replaced by $1/n^C$ for any $C > 0$.}

\begin{prop}\label{prop:zero:statement}
For every $\eps > 0$, the following holds for every sufficiently large $n \in \N$. Let $G$ be an abelian group of order $2n$, let $\O \in \SF(G)$ and suppose that $\E = G \setminus \O$ is nice and that $p \in (0,1)$ satisfies~\eqref{eq:pbounds:zerostatement}. If $A$ is a $p$-random subset of $G$, then
$$\Pr\big( A \cap \O \in \SF(A) \big) \le \, \frac{1}{n^2}.$$
\end{prop}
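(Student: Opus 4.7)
The strategy is to reduce to a lower-tail concentration bound for $|S^\E(A)|$ and then apply Warnke's inequality. Since $\O$ is sum-free, so is $A \cap \O$, and $A \cap \O \in \SF(A)$ forces it to be maximal in $A$: no $x \in A_\E := A \cap \E$ can be safe. By independence of $A_\O := A \cap \O$ and $A_\E$,
\[
\Pr\bigl(A \cap \O \in \SF(A)\bigr) \,\le\, \Pr\bigl(A_\E \cap S^\E(A) = \emptyset\bigr) \,=\, \Ex_{A_\O}\!\bigl[(1-p)^{|S^\E(A)|}\bigr].
\]
Setting $M := 3 \log n / p$ and splitting according to whether $|S^\E(A)| \ge M$ bounds this by $n^{-3} + \Pr(|S^\E| < M)$, so it suffices to show $\Pr(|S^\E| < M) \le n^{-2}/2$.

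I apply Warnke's inequality to $f(A_\O) := |S^\E(A)|$, a function of the $N = n$ independent Bernoullis indexed by $\O$. Flipping one coordinate $y \in \O$ changes the safety only of those $x \in \E$ with $x = y \pm a$ for some $a$ in $A_\O$ or equal to $y$, giving Lipschitz at most $2(|A_\O|+1)$. Take $\Gamma := \{|A_\O| \le (1+\eta) pn\}$ for a small constant $\eta > 0$; by Chernoff, $\Pr(\neg\Gamma) \le \exp(-\eta^2 pn/3)$ is super-polynomially small since $pn \ge (\log n)^2$. Choosing $\gamma := n^{-10}$ makes the Warnke leakage $\Pr(\B) \le N \Pr(\neg\Gamma)/\gamma$ negligible and keeps the effective Lipschitz $C \le 3 pn$. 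By Lemma~\ref{lem:expected:safe}, together with Lemma~\ref{lem:count:Gx} and the identity $\lambda^{(\delta)}(G)(1 - \beta(G)/4) = 1/2$ (in the case $\beta(G) \ge \delta$, with analogous computations otherwise), one has $\mu := \Ex|S^\E| \ge c_0 \, n^{1/2 + \eps(1 - \eps/2)}$, so $\mu \gg M$ and $t := \mu - M \ge \mu/2$. Warnke then gives
\[
\Pr\bigl(|S^\E| < M\bigr) \,\le\, n^{-5} + \exp\!\left(-\frac{(\mu/2)^2}{2 C^2 pn + C\mu/2}\right).
\]

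The main obstacle I foresee is bounding this Warnke exponent below by $2\log n$ for \emph{all} $\eps > 0$. In the ``exponential regime'' $\mu \ge p^2 n^2$ the denominator is dominated by $C\mu/2 \le 2pn\mu$, giving exponent $\gtrsim \mu/(pn) = n^{\Omega(\eps)}/\sqrt{\log n} \gg \log n$. The difficult case is the ``Gaussian regime'' $\mu < p^2 n^2$ (which applies for small $\eps$), where the exponent becomes $\mu^2/(p^3 n^3) \sim n^{2\eps(1-\eps/2) - 1/2}/\mathrm{polylog}(n)$ --- only exceeding $2\log n$ once $\eps > 1 - 1/\sqrt{2} \approx 0.293$. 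For smaller $\eps$ one must refine $\Gamma$ further: for example requiring, in addition, that $|A_\O \cap ((S^\E - y) \cup (y - S^\E))| \le C' p \mu$ for every $y \in \O$. By reverse FKG (``$a \in A_\O$'' is increasing while ``$y \pm a$ is safe'' is decreasing), this quantity has expectation at most $2 p\mu$, so imposing it reduces the effective Lipschitz to $\tilde O(p\mu)$ and inflates the Warnke exponent to $\gtrsim \sqrt{n}/\mathrm{polylog}(n)$. Bounding the probability that this refined $\Gamma$ fails is where the edge-count estimates of Section~\ref{sec:edge:counts} (Lemma~\ref{lem:count:Gx} and Propositions~\ref{prop:edge:counts}--\ref{prop:edge:counts:bigR}) should enter, via a union bound over $y \in \O$ and over the possible ``safe neighborhoods'' that could cluster around $y$.
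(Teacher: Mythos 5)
Your reduction to a lower-tail bound for $|S^\E(A)|$, and your recognition that a crude Lipschitz constant of order $pn$ only survives the Warnke exponent calculation for $\eps > 1 - 1/\sqrt{2}$, both match the logic of the paper. Your proposed fix — augmenting $\Gamma$ with a bound on $|A_\O \cap ((S^\E-y)\cup(y-S^\E))|$ for all $y$ — is also essentially the same idea as the paper's event $\Gamma$, which is defined directly by the Lipschitz condition $\max_u \big||S^\E(A)|-|S^\E(A\triangle\{u\})|\big|\le c(\E,p)$ with $c(\E,p)=n^{-(1/4+\delta)}\Ex[|S^\E(A)|]$. (Minor quibble: the relevant elements of $A_\O$ are $(S^\E-y)\cup(y-S^\E)\cup(S^\E+y)$, not just the first two; and your choice $\gamma=n^{-10}$ is needlessly aggressive, forcing $\Pr(\neg\Gamma)\ll n^{-13}$ rather than the paper's $\ll n^{-5}$.)

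The genuine gap is that the argument stops at exactly the point where the real work begins: you need a high-probability upper bound on $\Pr(\neg\Gamma)$, and the reverse-FKG first-moment bound you invoke (expectation $\le 2p\mu$) gets you nowhere near a polynomially small tail. Your closing sentence defers this to ``a union bound over $y\in\O$ and over the possible safe neighbourhoods that could cluster around $y$,'' but a naive union bound over subsets $Z\subset\E$ of candidate safe elements near $y$ is hopelessly lossy — there are $\binom{n}{k}\approx n^k$ such sets of size $k$, while $\Pr(Z\subset S^\E(A))$ is only about $\Ex[|S^\E|]^k/n^{k+o(k)}$ in favourable cases, and one also has to control the event that $\ge|Z|$ of the covering vertices land in $A_\O$ simultaneously; the two events are not independent. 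The paper handles precisely this by (i) fixing $u$ and passing to the $k$-th moment $\Ex[|Y_u^\E(A)|^k]$ of the ``damage set,'' (ii) restricting the sum to cover-maximal sets $Z$ and charging at most $12^{|Z|}$ other sets to each via Lemma~\ref{lem:fewcovers} (so that the combinatorics don't blow up), (iii) decoupling $\{|A\cap N_{\G_Z}(u)|\ge|Z|\}$ from $\{Z\subset S^\E(A)\}$ with FKG, (iv) estimating $\Pr(Z\subset S^\E(A))$ via Janson's inequality with edge-count input from Lemma~\ref{lem:count:Gx}, Observation~\ref{obs:GxGy:size} and Proposition~\ref{prop:edge:counts:bigR}, and finally (v) resumming over $Z$ to recover $\Ex[|S^\E(A)|]^\ell$ up to $n^{O(\delta\ell^2)}$ loss. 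None of these steps is a routine union bound, and without them the Lipschitz event $\Gamma$ cannot be shown to hold with probability $1-n^{-\omega(1)}$, so the proposal as written does not close the argument.
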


Recall also that at most $O(1/\delta)$ of the index 2 subgroups of $G$ are not nice. We will use the following simple-sounding lemma to deal with these subgroups.

\begin{lemma}\label{lem:M:nice}
Let $\M$ denote the collection of odd cosets $\O \in \SF(G)$ such that $|A \cap \O|$ is maximal. Then with high probability there is an $\O \in \M$ such that $\E = G \setminus \O$ is nice.
\end{lemma}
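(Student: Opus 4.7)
\begin{psketch}
The plan is to split into two cases based on $r(G)$. If $r(G) \le \delta n$ then every index-$2$ subgroup of $G$ is nice by definition, so the lemma is immediate. Assume then that $r(G) > \delta n$, in which case the preceding lemma gives that among the $K := r(G) - 1 \ge \delta n - 1$ odd cosets of $G$, at most $2/\delta$ correspond to non-nice subgroups, and hence at least $\delta n/2$ are nice.

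It suffices to show that, with high probability, $\max_{\O \text{ nice}} |A \cap \O| \ge \max_{\O \text{ non-nice}} |A \cap \O|$, since in that event $\M$ contains a nice coset. Each $|A \cap \O|$ is marginally $\Bin(n,p)$-distributed, so Chernoff's inequality and a union bound over the $\le 2/\delta$ non-nice cosets give $\max_{\O \text{ non-nice}} |A \cap \O| \le pn + t$ with high probability, where $t := C\sqrt{pn \log(1/\delta)}$ for a suitable constant $C$.

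The main step is a matching lower bound $\max_{\O \text{ nice}} |A \cap \O| \ge pn + t$. Heuristically this should hold comfortably, since the typical maximum of $\Omega(\delta n)$ variables each $\Bin(n,p)$-distributed lies near $pn + c\sqrt{pn \log(\delta n)}$, substantially exceeding $pn + t$ (because $\log(\delta n) \gg \log(1/\delta)$). The main obstacle is the positive correlation between coset sizes: any two distinct cosets share exactly $n/2$ elements, so their sizes have pairwise correlation $1/2$, which defeats a naive second-moment argument. I would handle this by conditioning on $A \cap \O_0$ for a fixed non-nice coset $\O_0$: this turns $|A \cap \O|$ for every other $\O$ into a deterministic offset $c_\O := |A \cap \O \cap \O_0|$ plus an independent contribution $Y_\O := |A \cap \O \cap \E_0| \sim \Bin(n/2,p)$. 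A careful tail estimate for the family of events $\{Y_\O \ge pn + t - c_\O\}$ over nice $\O$, exploiting the fact that the pairwise intersections $|\O \cap \O' \cap \E_0|$ take only a small number of possible values (determined by the index-$2$ subgroup lattice in Fact~\ref{mainfact}), then yields the required high-probability lower bound. A final union bound over the $\le 2/\delta$ choices of $\O_0$ completes the argument.
\end{psketch}
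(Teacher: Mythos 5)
Your reduction to the case $r(G) > \delta n$, the observation that at most $2/\delta$ subgroups fail to be nice, and the Chernoff bound on $\max_{\O \textup{ non-nice}} |A \cap \O|$ all match the paper, and you have correctly identified the central obstacle: the strong positive correlation between the coset sizes $|A \cap \O|$. The gap is that your proposed fix---conditioning on the set $A \cap \O_0$ for a fixed $\O_0$---does not remove that correlation. After the conditioning, the remaining randomness is a $p$-random subset of $\E_0$, and for distinct nice cosets $\O, \O' \neq \O_0$ one typically has $|\O \cap \O' \cap \E_0| = n/4$, so $\operatorname{Corr}(Y_\O, Y_{\O'}) = 1/2$: you have simply reproduced the original problem on the group $\E_0$ of order $n$, whose odd cosets are exactly the sets $\O \cap \E_0$. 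With your threshold $t \asymp \sqrt{pn \log(1/\delta)}$ and correlation $1/2$, the joint upper-tail probability for a pair exceeds the product of the marginals by a factor $\delta^{-\Theta(1)}$, so the second moment of the exceedance count is $\delta^{-\Theta(1)}$ times the square of the first moment, and Paley--Zygmund yields only a success probability that \emph{decays} as $\delta \to 0$. Noting that $|\O \cap \O' \cap \E_0|$ takes only the values $n/4$ and $n/2$ does not help, since those give correlations $1/2$ and $1$.

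The decorrelating device the paper uses---and which is missing from your sketch---is to condition not on a set but on the \emph{cardinality} $|A| = m$, i.e., to couple with the hypergeometric measure $\Pr_m$. Under $\Pr_m$ a surplus of $A$ on $\O$ is exactly compensated by a deficit on $\E$, and since every other odd coset $\O'$ meets $\O$ and $\E$ in equal halves ($n/2$ each), the conditional mean of $|A \cap \O'|$ given $|A \cap \O|$ is $m/2$ irrespective of $|A \cap \O|$. Lemma~\ref{2ndmom} makes this precise, showing $\Pr_m\big(M^\O_k(A) \cap M^{\O'}_k(A)\big) \sim \Pr_m\big(M^\O_k(A)\big)^2$, which is exactly the approximate pairwise independence needed for the second-moment argument of Lemma~\ref{le:existsO}; Lemmas~\ref{le:existsb} and~\ref{le:expXtheta} then supply part~$(b)$ of the sub-lemma, and the passage back to $\Pr_p$ is by a Chernoff bound on $|A|$. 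I do not see how to carry out the ``careful tail estimate'' you invoke without this (or an equivalent) decorrelation step.
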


The proof of Lemma~\ref{lem:M:nice}, although not difficult, is surprisingly technical, and so we shall postpone it to the appendix. Note that the 0-statement in Theorem~\ref{thm:even:abelian} follows from Proposition~\ref{prop:zero:statement} and Lemma~\ref{lem:M:nice} by taking a union bound over nice subgroups.

Recall that an element $x \in \E$ is called \emph{safe} if $(A \cap \O) \cup \{x\}$ is sum-free, and  that $S^\E(A)$ denotes the collection of safe elements in $\E$. We will bound the probability of the event $A \cap \O \in \SF(A)$ by the probability that there exists no safe element $x \in A \cap \E$. Since the random variable $S^\E(A)$ is independent of the set $A \cap \E$, it follows that
\begin{equation}\label{eq:zero:Chernoff}
\Pr\bigg( \Big( A \cap \O \in \SF(A) \Big) \cap \bigg( |S^\E(A)| \ge \frac{3 \log n}{p} \bigg) \bigg) \, \le \, \big( 1 - p \big)^{(3 \log n) / p} \, \le \, \frac{1}{n^3},
\end{equation}
and so it is enough to consider the event that $|S^\E(A)| \le (3 \log n) / p$.

We will bound the probability of this event using Warnke's concentration inequality, which was stated in Section~\ref{sec:prob:tools}. The first step -- showing that $|S^\E(A)|$ has large expected value -- was already carried out in the previous section. Indeed, we have

\begin{equation}\label{eq:safe:logn}
\Ex\big[ |S^\E(A)| \big] \, \gg \, \frac{\log n}{p}
\end{equation}
whenever $p \in (0,1)$ satisfies~\eqref{eq:pbounds:zerostatement}, by Lemma~\ref{lem:expected:safe}. Our main task will be to prove the following lemma, which shows that $|S^\E(A)|$ is concentrated around its expected value.

\begin{lemma}\label{lem:Xconcentration}
If $p \in (0,1)$ satisfies~\eqref{eq:pbounds:zerostatement}, then
$$\Pr\bigg( |S^\E(A)| \le \, \frac{\Ex\big[ |S^\E(A)| \big]}{2} \bigg) \le \, \frac{1}{n^3}.$$
\end{lemma}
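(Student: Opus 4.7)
The plan is to apply Warnke's inequality (stated in Section~\ref{sec:prob:tools}) to the random variable $X := |S^\E(A)|$, viewed as a function $f$ of the independent Bernoullis $Y_g = \mathbbm{1}[g \in A]$ for $g \in G$. Since $X$ depends only on $B := A \cap \O$, only coordinates $Y_a$ with $a \in \O$ can contribute to the Lipschitz analysis. Setting $\mu := \Ex[X]$ and $t := \mu/2$, the goal is to show $\Pr(X \le \mu - t) \le n^{-3}$.

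Toggling a single coordinate $Y_a$ (with $a \in \O$) flips the safety status of only those $x \in \E$ that lie in the sum/difference neighborhood $N(a, B) := \{a + b,\, a - b,\, b - a : b \in B\} \cup \{2a\}$ and are, moreover, safe for the pre-toggle configuration. The Lipschitz constant of $f$ at $B$ is therefore bounded by $\max_{a \in \O} X_a(B)$, where $X_a(B) := \big|\big\{ x \in N(a,B) : x \textup{ is safe for } B \big\}\big|$. Since $\{x \in N(a,B)\}$ is increasing in $B$ while $\{x \textup{ safe}\}$ is decreasing, Harris's inequality (the FKG inequality applied to one increasing and one decreasing event) yields
\[
\Ex[X_a] \,\le\, \sum_{x \in \E} \Pr\big(x \in N(a,B)\big) \Pr\big(x \textup{ safe}\big) \,\le\, 3p \mu + 1,
\]
using $\Pr(x \in N(a,B)) \le 3p$ via the union bound over the three possible $b \in B$ (namely $b \in \{x-a,\, a-x,\, x+a\}$).

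We then let $c$ be a modest polynomial-in-$n^\eps$ multiple of $3p\mu$ (large enough to absorb the concentration slack, small enough for the Warnke bound), take $\Gamma := \{\max_{a \in \O} X_a \le c\}$, and use the trivial bound $d := n$ off $\Gamma$. Picking $\gamma$ small enough that $(N/\gamma)\Pr(\neg \Gamma) \le n^{-3}$ and $\gamma(d-c) \ll c$ (both comfortable once $\Pr(\neg \Gamma)$ is super-polynomially small), we obtain $C \approx c$, so that the Warnke tail
\[
\exp\bigg(-\frac{t^2}{2C^2 pN + Ct}\bigg)
\]
is dominated by the term $2C^2 pN \lesssim c^2 pn$ in the denominator, and becomes $\le n^{-3}$ as soon as $c \ll \mu/\sqrt{pn \log n}$. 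This last inequality reduces to $p^3 n \log n \ll 1$, which holds with ample room since $p^3 n \log n \le (\log n)^{5/2}/\sqrt{n} = o(1)$ at $p \le \sqrt{\log n / n}$; one therefore has polynomial slack (a factor of $n^{1/4}/\text{polylog}$) between $3p\mu$ and the Warnke constraint.

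The main obstacle is establishing $\Pr(\neg \Gamma) \le n^{-K}$ for a sufficiently large constant $K$, which by union bound over $a \in \O$ reduces to the upper-tail concentration $\Pr(X_a > c) \le n^{-K-1}$. Since $X_a$ is a sum over $x \in \E$ of products of one increasing and one decreasing event, it is neither a sum of independent indicators nor monotone in $B$, and standard Chernoff or Azuma tools do not apply directly. Our plan is to compute the high moments $\Ex[X_a^m]$ for $m = \Theta(\log n)$ by expanding and applying Harris termwise, and then use Markov's inequality; alternatively, Kim--Vu polynomial concentration or a careful exposure-martingale argument exploiting the sparsity of the Cayley neighborhood should suffice. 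Because $\Ex[X_a] = O(p\mu)$ is polynomially smaller than $c$, the argument has room to yield super-polynomial tail decay.
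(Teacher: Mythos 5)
Your overall framework is exactly the paper's: apply Warnke's inequality to $A \mapsto |S^\E(A)|$, identify the one-step increment as $|Y_u^\E(A)|$ (your $X_a$), note that its expectation is $O(p\mu)$ via Harris/FKG, and observe that the valid window for $c$ is roughly $[p\mu,\, n^{1/4-o(1)}p\mu]$. The paper takes $c = c(\E,p) = n^{-(1/4+\delta)}\mu$, near the top of this window, precisely so that only a bounded number of moments is needed later. All of this is aligned with the argument in Section~\ref{sec:zero:statement}.

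The genuine gap is the concentration of $X_a$, which you flag as ``the main obstacle'' but leave unresolved, and the sketch you give would not go through as stated. First, ``applying Harris termwise'' to bound $\Ex[X_a^k]$ runs into two problems. Writing $X_a = \sum_{x}\mathbbm{1}[x \text{ safe}]\,\mathbbm{1}[x \text{ covered}]$ and expanding the $k$-th moment, one gets terms $\Pr(\text{all } z_i \text{ safe and covered})$. FKG decouples safety from covering (one decreasing, one increasing), but then $\Pr(\text{all covered})$ is \emph{not} $\approx (3p)^k$ for a generic $k$-tuple: if several $z_i$ share the same Cayley neighbour of $a$ in $\bigcup_i N_{\G_{z_i}}(a)$, a single element of $B$ covers them all and the joint covering probability is $\Theta(p)$, not $\Theta(p^k)$. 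The paper's cover-maximal machinery (the map $g$, Lemma~\ref{lem:fewcovers}, Observation~\ref{obs:nopairs:coverminimal}) is exactly the device that reduces to tuples $Z$ with pairwise disjoint neighbourhoods, with a bounded multiplicative loss of $12^{|Z|}$; without it the moment bound fails. Second, $\Pr(\text{all } z_i \text{ safe}) = \Pr(Z \subset S^\E(A))$ cannot be bounded above by the product of the marginals via FKG — the safety events are all decreasing, so FKG gives the inequality in the wrong direction (positive correlation). The paper instead uses Janson's inequality together with the edge-count estimates from Section~\ref{sec:edge:counts} (Observation~\ref{obs:GxGy:size}, Proposition~\ref{prop:edge:counts:bigR}) to control the $\Delta$ term and prove $\Pr(Z \subset S^\E(A)) \le n^{o(1)}\prod_{z\in Z}(1-p^2)^{e(\G_z)}$; none of this is captured by ``Harris termwise.'' Your alternative suggestions (Kim--Vu, exposure martingale) are also problematic: $\mathbbm{1}[x\text{ safe}]$ is a degree-$\Theta(n)$ polynomial in the indicators of $B$, so Kim--Vu gives vacuous bounds, and a naive martingale on $|B|$ exposures meets exactly the same worst-case Lipschitz obstruction this lemma is designed to circumvent. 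Finally, a caution on parameters: if you really take $c = n^{o(1)}p\mu$ and $m = \Theta(\log n)$, the $n^{O(\delta k^2)}$ losses in the moment bound (which appear even after the cover-maximal reduction, cf.\ \eqref{eq:expectedsafe:rsmall}) become superpolynomial unless $\delta = o(1/\log^2 n)$; the paper avoids this by taking $c$ polynomially larger than $p\mu$ and $k \le \sqrt{1/\delta}$ a large constant.
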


We will prove Lemma~\ref{lem:Xconcentration} by applying Warnke's inequality to the function $A \mapsto |S^\E(A)|$. In order to do so, we need to define an event $\Gamma \subset \P(\O)$, and prove the `typical Lipschitz condition'
\begin{equation}\label{eq:typLips}
\big| |S^\E(A)| - |S^\E(B)| \big| \, \le \, \left\{
\begin{array} {c@{\quad}l}
c(\E,p) := n^{-(1/4 + \delta)} \cdot \Ex\big[ S^\E(A) \big] & \textup{if } A \in \Gamma, \\
n & \textup{otherwise}
\end{array}\right.
\end{equation}
for every $A,B \subset \O$ with $| A \triangle B | = 1$ (note that $c(\E, p) \gg 1$, by~\eqref{eq:safe:logn}). We define the event $\Gamma$ so that~\eqref{eq:typLips} holds by definition:
\begin{equation}\label{def:gamma}
\Gamma \, := \, \Big\{ A \subset \O \,:\, \max\big\{ \big| |S^\E(A)| - |S^\E(B)| \big| : | A \triangle B | = 1 \big\} \le c(\E,p) \Big\}.
\end{equation}
We would like to show that $\Pr \big( A \not\in \Gamma \big) \le n^{-5}$, since this will imply the desired upper bound on the probability of the event $\B$ given by Warnke's inequality.

The main technical step in the proof of Lemma~\ref{lem:Xconcentration} is proving such a bound on the probability that $A \not\in \Gamma$.  To do so, note first that if $A \notin \Gamma$ then there exists $u \in \O$ such that $\big||S^\E(A)|-|S^\E(A \Delta \{u\})|\big|>c(\E,p)$. Let $\Gamma^c(u)$ be the set of choices of $A$ for which this property holds, so that $\Gamma^c = \bigcup_{u \in \O}\Gamma^c(u)$, and note that, by symmetry,\footnote{Indeed, if $B = A \Delta \{u\}$ then $A \in \Gamma^c(u) \Leftrightarrow B \in \Gamma^c(u) \Leftrightarrow \big| |S^\E(A)| - |S^\E(B)| \big| > c(\E,p)$.}
\begin{equation}\label{eq:xnotinA}
\Pr\big( A \in \Gamma^c(u) \,\big|\, u \in A \big) \, = \, \Pr\big( A \in \Gamma^c(u) \,\big|\, u \not\in A \big).
\end{equation}
We will bound $\Pr\big(A \in \Gamma^c(u) \big)$ for each fixed $u \in \O$, and then sum over $u$.

Motivated by~\eqref{eq:xnotinA}, let us fix $u \in \O$, assume that $u \not\in A$, and write
$$Y_u^\E(A) \, = \, S^\E(A) \setminus S^\E(A \cup \{u\}).$$
Observe that $A \in \Gamma^c(u)$ if and only if $|Y_u^\E(A)| > c(\E,p)$. We will prove the following lemma.

\begin{lemma}\label{lem:prob:gamma}
For every $k$ satisfying $25 < k \le \sqrt{1/\delta}$,
$$\Pr\big( A \not\in \Gamma \big) \le \, c(\E,p)^{-k} \sum_{u \in \O} \Ex\left[ \big| Y_u^\E(A) \big|^k \right] \ll \, \frac{1}{n^5}$$
as $n \to \infty$.
\end{lemma}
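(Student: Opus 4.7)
\begin{psketch}
The first inequality of the lemma is a straightforward consequence of Markov's inequality applied to $|Y_u^\E(A)|^k$, combined with a union bound over $u \in \O$ and the symmetry in~\eqref{eq:xnotinA}. Indeed, by the symmetry, $\Pr(A \in \Gamma^c(u)) = \Pr\big( |Y_u^\E(A)| > c(\E,p) \,\big|\, u \notin A \big)$, and Markov then gives this quantity is at most $c(\E,p)^{-k} \Ex\big[|Y_u^\E(A)|^k \,\big|\, u \notin A\big]$, which summed over $u$ gives the stated bound.

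The real content is showing $c(\E,p)^{-k} \sum_{u} \Ex[|Y_u^\E(A)|^k] \ll n^{-5}$. To bound the $k$-th moment, I would expand
$$\Ex\big[|Y_u^\E(A)|^k \,\big|\, u \notin A \big] \,=\, \sum_{(x_1,\ldots,x_k) \in \E^k} \Pr\big( x_i \in Y_u^\E(A) \text{ for all } i \,\big|\, u \notin A\big),$$
and observe that $x_i \in Y_u^\E$ requires both (a) $A \cap \O$ is independent in $\G_{x_i}$, and (b) some neighbor of $u$ in $\G_{x_i}$ lies in $A$. For a fixed tuple $(x_1,\ldots,x_k)$ with $S = \{x_1,\ldots,x_k\}$, the event under (a) for all $i$ is $\{A \cap \O \text{ is independent in } \G_S\}$, which is a decreasing event in $A \cap \O$, while the event under (b) for all $i$ is increasing. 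Thus, by the FKG inequality (negative correlation between events of opposite monotonicity), their joint probability is at most the product of the marginals. Janson's inequality (applicable since $\Delta(\G_S) \le 3k$ and $p = o(1/k)$) gives
$$\Pr(A \cap \O \text{ is independent in } \G_S) \,\le\, (1-p^2)^{(1-o(1))e(\G_S)},$$
while for the witness event the standard union bound over the tuples $(v_1,\ldots,v_k) \in \prod_i N_{\G_{x_i}}(u)$ of potential witnesses, each of size at most $3$, bounds the marginal probability by $(3p)^k$ after observing that for most tuples $(x_i)$ the neighborhoods $N_{\G_{x_i}}(u)$ are disjoint.

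I would then bound $\sum_{S \subset \E, |S| = k} (1-p^2)^{e(\G_S)}$ using the edge counts from Section~\ref{sec:edge:counts}. For the case $r(G) \le \delta n$, Proposition~\ref{prop:edge:counts} gives $e(\G_S) \ge (3k - r(S))n/2 - O(\delta n k^2)$, and splitting by the value of $r(S)$ and using the bounds $\binom{r(\E)}{r}\binom{(n-r(\E))/2}{k-r}2^{k-r}$ on the number of $S$ with $r(S) = r$ yields a bound matching the heuristic $\sum_S (1-p^2)^{e(\G_S)} \lesssim \Ex[|S^\E(A)|]^k / k!$, up to error factors absorbed by $k \le \sqrt{1/\delta}$. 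In the case $r(G) \ge \delta n$, I would instead use Proposition~\ref{prop:edge:counts:bigR} (stratifying $S$ by the parameter $s$), combined with the lower bound on $\Ex[|S^\E(A)|]$ from~\eqref{eq:safe:hypercube}. Both cases produce $\Ex[|Y_u^\E(A)|^k] \le k! \cdot (C p \cdot \Ex[|S^\E(A)|])^k$ for some absolute constant $C$, up to negligible polynomial factors in $n$.

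Putting everything together and dividing by $c(\E,p)^k = n^{-(1/4 + \delta)k} \Ex[|S^\E(A)|]^k$, we get
$$c(\E,p)^{-k} \sum_{u \in \O} \Ex\big[|Y_u^\E(A)|^k\big] \,\le\, k! \cdot n \cdot \bigl( C p \cdot n^{1/4 + \delta} \bigr)^k.$$
Using $p \le (1-\eps)\sqrt{\lambda^{(\delta)}(G) \log n/n} \le n^{-1/2 + o(1)}$, the right-hand side is at most $n \cdot n^{(-1/4 + \delta + o(1))k}$, and for $k > 25$ and $\delta$ sufficiently small, this is $\ll n^{-5}$ as required. The main obstacle will be controlling the witness factor: the naive witness-union bound only yields $p^{|V|}$ with $|V| \ge k/3$ when $(x_i)$ are distinct, which is too weak. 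The key to obtaining $(3p)^k$ is to exploit that each element $v \in \O$ appears in $N_{\G_x}(u)$ for at most three values of $x$ (the three solutions of $x = u \pm v, v - u$), and then carefully sum $\sum_{(v_i)} p^{|V|}$ over witness tuples and compatible $(x_i)$'s via a generating function identity (the relevant one being $((w+1)^3 - 1)^\ell$), showing that the contribution from tuples with few distinct witnesses is dominated by a constant factor relative to the disjoint-$N_i$ case.
\end{psketch}
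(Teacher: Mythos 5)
The first inequality (Markov plus symmetry) and the overall shape of the argument (FKG to separate the ``witness'' event from $Z\subset S^\E(A)$, Janson for the independence probability, then the edge-count propositions) match the paper. But you have not closed the gap you yourself flag as ``the main obstacle,'' and that is precisely where the paper's proof does its real work.

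The difficulty is that the witness event depends only on the set $S=\{x_1,\dots,x_k\}$, and when $S$ has a small cover (a set $Y\subset\O$ with $|Y|<|S|$ such that every $x\in S$ has a neighbour of $u$ in $Y$), the witness probability is $\approx p^{|Y|}$ rather than $\approx p^{|S|}$. Your generating-function hint $((w+1)^3-1)^\ell$ is vague and, as stated, does not control this: you would still need to show that sets $S$ with covers much smaller than $|S|$ are sufficiently rare, and that is a combinatorial statement, not one that follows from the bipartite degree bound alone. The paper resolves exactly this by introducing the \emph{cover-maximal} subset $g(Z)\subset Z$ (a largest subset whose every cover has size at least $|g(Z)|$) and proving Lemma~\ref{lem:fewcovers}: each cover-maximal $Z$ has at most $12^{|Z|}$ preimages under $g$. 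This reduction costs only an exponential factor $12^k$ and ensures that for the surviving sets the witness probability really is $\le 2^{3\ell}p^\ell$. Without an explicit substitute for Lemma~\ref{lem:fewcovers}, your bound $\Ex[|Y_u^\E(A)|^k]\le k!\,(Cp\,\Ex[|S^\E(A)|])^k$ is unjustified.

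A second omission: Propositions~\ref{prop:edge:counts} and~\ref{prop:edge:counts:bigR}, which you invoke to bound $\sum_S(1-p^2)^{e(\G_S)}$, require $m(S)=0$, and your sum ranges over all $S$. The paper again gets this for free from cover-maximality via Observation~\ref{obs:nopairs:coverminimal} (a cover-maximal set cannot contain a pair $\{a,-a\}$ with $a\ne -a$). You would need to handle $m(S)>0$ separately, e.g.\ by passing to a subset $\hat S$ with $m(\hat S)=0$, and account for the lost witness constraints; this is doable but is another detail your sketch skips. In short, the endgame arithmetic is right, but the central reduction that makes the witness bound and the $m=0$ hypothesis simultaneously available is missing.
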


Note that the first inequality follows from the comments above and Markov's inequality. The intuition behind the second inequality is based on our expectation that $|Y_u^\E(A)| =\Theta(p\big|S^\E(A)|\big)$, and that the events $\big\{ z \in Y_u^\E(A) : z \in \E \big\}$ are more or less independent of one another. We expect $|Y_u^\E(A)|$ to take roughly this value since $Y_u^\E(A) \subset S^\E(A)$, and moreover for each $z \in Y_u^\E(A)$ there is a $v \in \O$ with $uv \in E(\G_z)$ such that $v \in A$.

In order to make this argument precise, the following notion will be crucial. Fix $u \in \O$, and say that a set $0 \ne Z \subset \E$ is \emph{covered} by $Y \subset \O$ if for each $z \in Z$ there is a $y \in Y$ such that $uy \in E(\G_z)$. Say that $Z$ is \emph{cover-maximal} if $|Y| \ge |Z|$ for every set $Y$ that covers $Z$, and for each $Z \subset \E$ choose a maximum-size cover-maximal subset $g(Z) \subset Z$. Note that since any singleton in $Z$ is cover-maximal, $g(Z)$ is non-empty. The following lemma is key.

\begin{lemma}\label{lem:fewcovers}
For each $Z \subset \E$, there are at most $12^{|Z|}$ sets $Z' \subset \E$ such that $g(Z') = Z$.
\end{lemma}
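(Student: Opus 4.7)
\begin{psketch}
The plan is to translate cover-maximality into graph independence and then bound extensions via a per-vertex analysis.

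Defining the auxiliary graph $L$ on $\E$ by $z_1 \sim_L z_2$ iff $W_{z_1} \cap W_{z_2} \ne \emptyset$, a case analysis on the nine coincidences between $W_{z_1} = \{z_1-u, u-z_1, z_1+u\}$ and $W_{z_2}$ reduces this condition to $z_1 + z_2 \in \{0, 2u\}$ or $z_1 - z_2 \in \{\pm 2u\}$. Hence $Z$ is cover-maximal iff $Z$ is independent in $L$, and each vertex has at most four $L$-neighbours,
\[
N_L(z) = \{-z,\; 2u - z,\; z + 2u,\; z - 2u\}.
\]

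The condition $g(Z') = Z$ says that $Z$ is a maximum independent set in $L[Z']$, which forces $Z'' := Z' \setminus Z \subseteq N_L(Z)$: otherwise $Z \cup \{z''\}$ for any $z'' \notin N_L(Z)$ would be a larger independent set. This already yields the crude bound $|\{Z' : g(Z') = Z\}| \le 2^{4|Z|} = 16^{|Z|}$.

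To improve this to $12^{|Z|}$, I would analyse $L[N_L(z)]$ more carefully. A direct computation shows it is generically a 4-vertex path $(z+2u) - (-z) - (2u - z) - (z-2u)$, containing three independent pairs. For each $z \in Z$ let $Z''_z := Z'' \cap N_L(z)$. The key claim is that if $Z''_z$ contains an independent pair $\{t_1, t_2\}$, then the augmenting swap $(Z \setminus \{z\}) \cup \{t_1, t_2\}$ produces a strictly larger cover-maximal subset of $Z'$, contradicting $g(Z') = Z$, unless some element of $\{t_1, t_2\}$ has an $L$-neighbour in $Z \setminus \{z\}$. A canonical-parent reassignment (re-attributing any such rescued element of $N_L(z)$ to its rescuer in $Z$, so that it ends up in some $Z''_w$ with $w \ne z$ rather than in $Z''_z$) reduces this to a single forbidden configuration per vertex: the translational pair $\{z+2u,\, z-2u\}$. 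Since the number of subsets of the 4-element set $N_L(z)$ avoiding this pair is $2^4 - 2^2 = 12$, and the map $Z' \mapsto (Z''_z)_{z \in Z}$ is injective (as $Z'' = \bigcup_{z \in Z} Z''_z$), we obtain $\prod_{z \in Z} 12 = 12^{|Z|}$, as required.

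The main technical obstacle is ensuring that the canonical-parent reassignment is consistent and uniform across $z \in Z$, so that the two "mixed" independent pairs $\{z+2u,\, 2u-z\}$ and $\{-z,\, z-2u\}$ genuinely disappear from the local obstruction list without introducing new obstructions elsewhere. A small additional case analysis would also be needed to handle the degenerate situations (such as $2u = 0$ or $z \in R(\E)$) in which $L[N_L(z)]$ fails to be a path, but there the neighbourhood is strictly smaller and the bound is easier.
\end{psketch}
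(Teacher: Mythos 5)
Your reformulation of cover-maximality as independence in the auxiliary graph $L$ is a clean and correct observation, and the immediate consequence $Z' \setminus Z \subseteq N_L(Z)$ together with $|N_L(z)| \le 4$ gives $16^{|Z|}$ painlessly. That weaker bound would in fact suffice for everything the paper needs, since Lemma~\ref{lem:fewcovers} is only ever invoked to absorb a factor $C^k$ with $k \le \sqrt{1/\delta}$, and any absolute constant $C$ makes this $n^{o(1)}$. The paper reaches $12^{|Z|}$ by a different count: it enumerates transversals $Y \subset \O$ of $\{N_{\G_z}(u)\}_{z \in Z}$ of size exactly $|Z|$ (at most $3^{|Z|}$, using disjointness of the neighbourhoods and $\Delta(\G_z) \le 3$), and then notes that every $z' \in Z'\setminus Z$ is covered by such a $Y$, which covers at most $2|Z|$ elements outside $Z$, giving $3^{|Z|}\cdot 2^{2|Z|}$.

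The refinement from $16^{|Z|}$ to $12^{|Z|}$ via ``canonical-parent reassignment'' has a genuine gap, and not just the polish issues you flag. The augmenting-swap argument shows that if $\{t_1,t_2\}\subset Z''\cap N_L(z)$ is $L$-independent then one $t_i$ has an $L$-neighbour in $Z\setminus\{z\}$; but this applies symmetrically to \emph{all three} non-edges of the $4$-path $L[N_L(z)]$, including the translational pair $\{z+2u,z-2u\}$ (e.g.\ $z+2u$ can be rescued by $z+4u\in Z$ or $-z-2u\in Z$, neither of which is excluded by $L$-independence of $Z$). So there is no structural reason why the reassignment should eliminate precisely the two ``mixed'' pairs while sparing the translational one --- the identity of the surviving obstruction is not determined. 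In fact, if the reassignment did work as advertised for \emph{every} independent pair, it would force each reassigned $Z''_z$ to be an $L$-clique inside $N_L(z)$, giving at most $8$ options per vertex, which is too strong to be plausible. In addition, the re-attribution map is not shown to be well-defined (elements can lie in $N_L(z_1)\cap N_L(z_2)$) or terminating (cascading re-attributions could cycle), so the injectivity of $Z' \mapsto (Z''_z)_{z\in Z}$ is not established. You should either settle for the clean $16^{|Z|}$, which suffices, or switch to a scheme like the paper's, where the object being enumerated is the transversal $Y$ rather than per-vertex neighbourhood subsets.
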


\begin{proof}
Consider a set $Z' \subset \E$ such that $g(Z') = Z$. Then for any $z \in Z' \setminus Z$, there must exist some set $Y \subset \O$ of size $|Z|$ that covers $Z \cup \{z\}$ (and hence also covers $Z$), otherwise the set $Z \cup \{z\}$ contradicts the maximality in the definition of $g(Z')$.

We claim that there are at most $3^{|Z|}$ sets $Y \subset \O$ of size $|Z|$ covering $Z$. Indeed, since $Z$ is cover-maximal, $Y$ must contain exactly one element of $N_{\G_z}(u)$ for each $z \in Z$, and these neighbourhoods must be disjoint. Since $\Delta(\G_z) \le 3$, it follows that we have at most $3^{|Y|} = 3^{|Z|}$ choices for $Y$. But each such set $Y$ covers at most $3|Z|$ elements (since each is in $(Y \pm u) \cup (u - Y)$), and each $z \in Z' \setminus Z$ must be covered by some such $Y$, by the comments above. We therefore have at most $3^{|Z|} \cdot 2^{2|Z|} = 12^{|Z|}$ possible pre-images of $Z$, as claimed.
\end{proof}

We also need the following simple observation, which follows easily from the definition.

\begin{obs}\label{obs:nopairs:coverminimal}
If $Z$ is cover-maximal and $\{a,-a\} \subset Z$, then $a = -a$.
\end{obs}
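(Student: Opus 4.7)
The plan is to argue by contrapositive: assuming $\{a, -a\} \subset Z$ with $a \ne -a$, I will construct a cover of $Z$ of size at most $|Z| - 1$, contradicting the cover-maximality of $Z$. The entire argument pivots on finding a single vertex in $\O$ that simultaneously covers both $a$ and $-a$.

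The crucial candidate is $y^{\star} := u + a$. Quick verifications (using $u \in \O$, $a \in \E$, and $0 \notin Z$) show that $y^{\star} \in \O$, $y^{\star} \ne u$, and $y^{\star} \ne 0$. The key point is that both $y^{\star} - u = a$ and $u - y^{\star} = -a$ hold, so the single edge $\{u, y^{\star}\}$ lies simultaneously in $E(\G_a)$ and $E(\G_{-a})$. Hence $y^{\star}$ alone covers both $a$ and $-a$.

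To complete the cover, for each $z \in Z \setminus \{a, -a\}$ I pick any witness $y_z \in N_{\G_z}(u) \cap \O$; the default choice $y_z = z - u$ works except possibly when $z = 2u$, in which case $y_z = 3u$ serves instead (this subcase forces $u \ne -u$, since $u = -u$ would give $z = 0 \notin Z$, and then $3u - u = 2u = z$ shows $\{u, 3u\} \in E(\G_z)$). Setting $Y := \{y^{\star}\} \cup \{y_z : z \in Z \setminus \{a, -a\}\} \subset \O$, I obtain a cover of $Z$ with $|Y| \le 1 + (|Z| - 2) = |Z| - 1$, contradicting cover-maximality.

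The only genuinely creative step is spotting the vertex $y^{\star} = u + a$: once one observes that the pair of identities $y^{\star} - u = a$ and $u - y^{\star} = -a$ forces $\{u, y^{\star}\}$ into both Cayley graphs, the rest is routine bookkeeping. I do not anticipate any substantial obstacle beyond this insight.
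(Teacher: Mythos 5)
Your proof is correct and is essentially the paper's own argument: the sole idea is to observe that $y^\star = u+a \in \O$ covers both $a$ and $-a$ simultaneously (via $y^\star - u = a$ and $u - y^\star = -a$), which yields a cover of size $|Z|-1$ and contradicts cover-maximality. The extra bookkeeping you supply (handling $z = 2u$, checking $y^\star \ne u$, etc.) is a slightly more explicit version of what the paper leaves implicit, but it is the same route.
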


\begin{proof}
The element $u + a \in \O$ covers both $a$ and $-a$, and so if $\{a,-a\} \subset Z$ and $a \neq -a$ then there exists a set $Y$ with $|Y| \le |Z| - 1$ which covers $Z$.
\end{proof}

We are ready to prove Lemma~\ref{lem:prob:gamma}.

\begin{proof}[Proof of Lemma~\ref{lem:prob:gamma}]
Consider the family $\M_k$ of non-empty cover-maximal sets $Z \subset \E$ with $|Z|=k$, and note that if $Z' \subset Z$, then trivially
$$\Pr\big( Z' \subset Y_u^\E(A) \big) \ge \, \Pr\big( Z \subset Y_u^\E(A) \big).$$
Thus, by Lemma~\ref{lem:fewcovers}, we have
$$\sum_{Z \le |k|} \Pr\big( Z \subset Y_u^\E(A) \big) \, \le \, 12^k \sum_{\ell=1}^k\sum_{Z \in \M_{\ell}} \Pr\Big( \big( | A \cap N_{\G_Z}(u) | \ge |Z| \big) \cap \big( Z \subset S^\E(A) \big) \Big),$$
since each set $Z$ contains a non-empty cover-maximal set $g(Z)$, and each such set is counted at most $12^k$ times. Now, since $|N_{\G_Z}(u)| \le 3|Z|$, the right-hand side is at most
\begin{equation}\label{eq:bound:sumoverM}
12^k \sum_{\ell = 1}^k \sum_{Z \in \M_{\ell}} 2^{3\ell} p^\ell \cdot \Pr\big( Z \subset S^\E(A) \big),
\end{equation}
by the FKG inequality, since $\big\{ Z \subset S^\E(A) \big\}$ is decreasing in $A$, whereas $\big\{ | A \cap N_{\G_Z}(u)| \ge |Z| \big\}$ is clearly increasing.

We will apply Janson's inequality to bound $\Pr\big( Z \subset S^\E(A) \big)$ for each $Z \in \M_{\ell}$. Note that $m(Z) = 0$, by Observation~\ref{obs:nopairs:coverminimal}, and that $Z \subset S^\E(A)$ implies that $A \cap \O$ is an independent set in $\G_Z$, and suppose first that $r(G) \le \delta n$. Then,
$$\mu \,:=\, p^2 e(\G_Z) \,\ge\, \bigg( p^2 \sum_{z \in Z} e(\G_z) \bigg) - O\big( \delta \ell^2 p^2 n \big) \quad \textup{and} \quad \Delta \,:=\, p^3 \sum_{v \in \O} \binom{d_{\G_Z}(v)}{2} \,=\, O\big( \ell^2 p^3 n \big),$$
since $\big| E(\G_y) \cap E(\G_z) \big| \le 2 \cdot r(\E) = O(\delta n)$ for every $y,z \in Z$ by Observation~\ref{obs:GxGy:size}. Therefore, since $e(\G_z) \ge n/2$ for every $0 \ne z \in \E$ by Observation~\ref{obs:edge_lower_bound}, and $\ell \le k\le 1/\sqrt{\delta}$, it follows by Janson's inequality and~\eqref{eq:pbounds:zerostatement} that
$$\Pr\big( Z \subset S^\E(A) \big) \, \le \, n^{O(\delta \ell^2)} \exp\bigg( - p^2 \sum_{z \in Z} e(\G_z) \bigg)  \, = \, n^{O(\delta \ell^2)} \prod_{z \in Z} \big( 1 - p^2 \big)^{e(\G_z)},$$
since $1 - p^2 \ge e^{- p^2 - p^4}$ when $p$ is sufficiently small, and $p^4 e(\G_z) = o(1)$. Thus
\begin{align}
& \sum_{Z \in \M_\ell} \Pr\big( Z \subset S^\E(A) \big) \, \le \, n^{O(\delta \ell^2)} \sum_{Z \in \M_\ell}  \prod_{z \in Z} \big( 1 - p^2 \big)^{e(\G_z)} \nonumber\\
& \hspace{3.5cm} \, \le \, n^{O(\delta \ell^2)} \bigg( \sum_{z \in \E} \big( 1 - p^2 \big)^{e(\G_z)} \bigg)^\ell \, \le \, n^{O(\delta \ell^2)} \cdot \Ex\big[ |S^\E(A)| \big]^\ell, \label{eq:expectedsafe:rsmall}
\end{align}
where the final inequality follows by the FKG inequality.

On the other hand, if $r(G) \ge \delta n$ then, by  Proposition~\ref{prop:edge:counts:bigR}, there are at most $n^{s + o(1)}$ sets $Z \subset \E$ with $|Z| = \ell$, $m(Z) = 0$ and
$$ s \bigg( n - \frac{r(\O)}{2} \bigg) \le e(\G_Z) < (s+1) \bigg( n - \frac{r(\O)}{2} \bigg).$$
Thus, applying Janson's inequality as before, we obtain\footnote{When $s = \ell$, we trivially bound the number of sets $Z$ such that $e(\G_Z) \ge \ell \left(n - \frac{r(\O)}{2}\right)$ by $n^\ell$.}
\begin{align}
& \sum_{Z \in \M_\ell} \Pr\big( Z \subset S^\E(A) \big) \, \le \, n^{o(1)} \sum_{s = 1}^{\ell} \Big( n \big( 1 - p^2 \big)^{n - r(\O)/2} \Big)^s \nonumber\\
& \hspace{3.5cm} \, = \, n^{o(1)} \Big( 1 + n \big( 1 - p^2 \big)^{n - r(\O)/2} \Big)^\ell \, \le \, n^{o(1)} \cdot \Ex\big[ |S^\E(A)| \big]^\ell, \label{eq:expectedsafe:rbig}
\end{align}
by~\eqref{eq:safe:hypercube}. Combining~\eqref{eq:bound:sumoverM},~\eqref{eq:expectedsafe:rsmall} and~\eqref{eq:expectedsafe:rbig}, it follows that
$$\Ex\left[ \big| Y_u^\E(A) \big|^k \right] \, \le \, n^{o(1)} \cdot \sum_{\ell = 1}^k \sum_{Z \in \M_\ell} p^\ell \cdot \Pr\big( Z \subset S^\E(A) \big) \, \le \, n^{O(\delta k^2)} \Big( p \cdot \Ex\big[ |S^\E(A)| \big] \Big)^k,$$
and the lemma follows, since $c(\E,p)^{-1} \cdot p \cdot \Ex\big[ |S^\E(A)| \big] \ll n^{-1/5 - \eps}$.
\end{proof}

It is now straightforward to deduce Lemma~\ref{lem:Xconcentration}, and hence Proposition~\ref{prop:zero:statement}.

\begin{proof}[Proof of Lemma~\ref{lem:Xconcentration}]
We apply Warnke's inequality to the function $A \mapsto |S^\E(A)|$ and the event $\Gamma$ defined in~\eqref{def:gamma}, with
$$c = c(\E,p) \gg 1, \qquad d = n, \qquad \gamma = \frac{c(\E,p)}{n} \qquad \textup{and} \qquad t = \frac{\Ex\big[ |S^\E(A)| \big]}{2}.$$
We obtain an event $\B$ such that
$$\Pr\big( A \in \mathcal{B} \big) \, \le \, \frac{n^2}{c(\E,p)} \cdot \Pr \big( A \not\in \Gamma \big) \, \ll \, \frac{1}{n^3},$$
where the last inequality follows by Lemma~\ref{lem:prob:gamma}, such that
\begin{align*}
\Pr\bigg( |S^\E(A)| \le \, \frac{\Ex\big[ |S^\E(A)| \big]}{2} \bigg) & \, \le \, \Pr\big( A \in \mathcal{B} \big) + \exp \left(- \frac{t^2}{4c(\E,p)^2 p n + 2c(\E,p) t} \right)\\
& \, \le \, \frac{o(1)}{n^3} \,+\, \exp \big( - n^\delta \big) \, \le \, \frac{1}{n^3},
\end{align*}
as required.
\end{proof}

\begin{proof}[Proof of Proposition~\ref{prop:zero:statement}]
We split the event $A \cap \O \in \SF(A)$ into two parts, depending on whether or not $|S^\E(A)| \le (3 \log n) / p$. By Lemmas~\ref{lem:expected:safe} and~\ref{lem:Xconcentration}, the probability that $|S^\E(A)| \le (3 \log n) / p$ is at most $1 / n^3$. On the other hand, by \eqref{eq:zero:Chernoff}, the probability that $A \cap \O \in \SF(A)$ and $|S^\E(A)| \ge (3 \log n) / p$ is at most $1/n^3$. Therefore
$$\Pr\big( A \cap \O \in \SF(A) \big) \le \, \Pr\bigg( |S^\E(A)| \le \frac{3 \log n}{p} \bigg) + \frac{1}{n^3} \, \le \, \frac{1}{n^2},$$
as required.
\end{proof}

The 0-statement now follows immediately.

\begin{proof}[Proof of the $0$-statement in Theorem~\ref{thm:even:abelian}]
Recall that an abelian group $G$ has at most $|G|$ index 2 subgroups. Thus, by Proposition~\ref{prop:zero:statement} and the union bound, it follows that with high probability $A \cap \O \not\in \SF(A)$ whenever $\E = G \setminus \O$ is nice. However, by Lemma~\ref{lem:M:nice}, with high probability there is an odd coset $\O \in \SF(G)$ such that $|A \cap \O|$ is maximal and $\E = G \setminus \O$ is nice. Hence with high probability $A \cap \O \not\in \SF(A)$ for every $\O \in \SF(G)$, as required.
\end{proof}

\section{Proof of the 1-statement}\label{sec:one:statement}

In this section we will prove that if $A \subset G$ is a $p$-random set and
$$p \ge \big(1 + \eps \big) \sqrt{ \lambda^{(\delta)}(G) \frac{\log n}{n} },$$
then every $B \in \SF(A)$ is equal to $A \cap \O$ for some $\O \in \SF(G)$, with high probability as $n \to \infty$. The proof has three steps: an application of Theorem~\ref{thm:approx:stability} to obtain an asymptotic version, an argument for a given odd coset $\O \in \SF(G)$, using the method of~\cite{BMS1} (see Lemma~\ref{lem:BMSmethod}), and a comparison with the hypergeometric distribution, which allows us to a partition the odd cosets depending on the size of $A \cap \O$ (see Lemma~\ref{le:existsb}). Recall throughout that we have already fixed an arbitrary $\eps>0$, a sufficiently small $\delta >0$ and a sufficiently large $n\in\N$.

We begin by proving the statement we will require for a given odd coset $\O \in \SF(G)$.
For each $k \in \N$, let $\B_k^\O(A)$ denote the event that there exist sets $S \subset A \cap \E$ and $T \subset A \cap \O$, with $|S| = k \ge |T|$, such that $\big( (A \cap \O ) \cup S \big)\setminus T$ is sum-free.

\begin{lemma}\label{lem:BMSmethod}
Let $G$ be an abelian group of order $2n$, and let $\O \in \SF(G)$. Suppose that
$$p \ge \big(1 + \eps \big) \sqrt{ \lambda^{(\delta)}(G) \frac{\log n}{n} },$$
and let $p_1 = (1-\delta)p$ and $p_2 = (1+\delta)p$. Set $A = A_1 \cup A_2$, where $A_1$ is a $p_1$-random subset of $\O$ and $A_2$ is a $p_2$-random subset of $\E = G \setminus \O$. Then
$$\Pr\big(\B^\O_k(A) \big) \, \le \, \max\big\{ n^{-\delta k}, e^{-\sqrt{n}} \big\}$$
for every $1 \le k \le \delta p n$.
\end{lemma}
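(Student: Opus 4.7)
The proof will follow the method of~\cite{BMS1}, in three main steps: a reduction of the event $\B_k^\O(A)$ to a graph-theoretic independence condition in the Cayley graphs of Section~\ref{sec:edge:counts}; a union bound over $S$ and $T$, with Janson's inequality applied to each fixed pair; and a careful summation over $S$ partitioned by the parameter $r(S)$ and by whether $S$ is deficient in the sense of Proposition~\ref{prop:edge:counts}.

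For the first step, since $\O$ is itself sum-free (as an odd coset of an index-$2$ subgroup) and $\O + \O \subset \E$, the set $B := ((A_1 \cap \O) \cup S) \setminus T$ is sum-free iff (i)~$S$ is sum-free in $\E$, and (ii)~there is no pair $u, v \in (A_1 \cap \O) \setminus T$ with $u + v \in S$ or $u - v \in S$. Condition~(ii) is exactly that $(A_1 \cap \O) \setminus T$ is independent in $\G_S$; discarding~(i) and using independence of $A_1, A_2$, the union bound yields
\[
\Pr\bigl(\B_k^\O\bigr) \;\le\; \sum_{\substack{S \subset \E \\ |S| = k}} p_2^k\, \sum_{j=0}^{k}\, \binom{n}{j}\, p_1^{j}\, \max_{\substack{T \subset \O \\ |T| = j}} \Pr\bigl(A_1 \cap (\O \setminus T) \text{ indep.\ in } \G_S - T\bigr).
\]
For each fixed $(S, T)$, I will apply Janson's inequality to the edges of $\G_S - T$. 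Using $\Delta(\G_S) \le 3k$ from Lemma~\ref{lem:count:Gx}, we have $e(\G_S - T) \ge e(\G_S) - 3k|T|$. The first form of Janson dominates when $p_1 k = O(1)$ and gives a bound of $\exp\bigl(-p_1^2 e(\G_S)(1-o(1))\bigr)$; the second form applies when $p_1 k$ is large and gives $\exp(-\Theta(p_1 n)) \le e^{-\sqrt n}$, which accounts for the second branch of the max.

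Concentrating on the first-form regime, the inner sum over $T$ of size $j \le k$ is at most $\binom{n}{j} p_1^j \exp(3 k j p_1^2)$, where the exponential absorbs into $n^{o(k)}$; since $k \le \delta p n \le n p_1 / 2$ the binomial is increasing in $j$, so the $T$-sum is at most $(k+1) \binom{n}{k} p_1^k \cdot n^{o(k)}$. The remaining task is to show
\[
\sum_{\substack{S \subset \E \\ |S| = k}} p^{3k+o(k)} \binom{n}{k}\, \exp\bigl(-p_1^2 e(\G_S)\bigr) \;\le\; n^{-\delta k}.
\]
I will partition $S$ by $r := r(S)$. For non-deficient sets, Proposition~\ref{prop:edge:counts} (when $r(G) \le \delta n$) or Proposition~\ref{prop:edge:counts:bigR} (when $r(G) \ge \delta n$) gives $e(\G_S) \ge \tfrac{3k-r}{2}\,n - o(kn)$, and the number of such $S$ is at most $\binom{r(\E)}{r}\binom{n-r(\E)}{k-r}$; the deficient sets are controlled by the second halves of these propositions, whose sharper set-counts more than compensate for the drop in $e(\G_S)$.

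The hard part will be the optimisation over $r$ in each of the three regimes of $\lambda^{(\delta)}(G)$. When $\alpha(G) \le 5/6$ the minimum is attained near $r = 0$, and $\lambda = 1/3$ is exactly strong enough to beat the factor $(pn)^k = n^{k/2+o(k)}$ coming from $p^k \binom{n}{k}$. When $\alpha(G) > 5/6$ and $\beta(G) < \delta$, the large binomial $\binom{r(\E)}{r}$ shifts the optimum to $r \approx k$, and matching $r(\E)^r \approx n^{\alpha(G) r}$ against the Janson exponent $n^{-(1+\eps)^2\lambda(3k-r)/2}$ forces $\lambda = \alpha(G) - 1/2$. When $\beta(G) \ge \delta$ the Cayley graph $\HH_W$ of squares has non-trivial degree, and Proposition~\ref{prop:edge:counts:bigR} with its parameter $s$ is the sharper tool, leading via a careful count to $\lambda = 2/(4 - \beta(G))$. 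In each case the $o(k)$ error terms arising from the Janson correction, the deficient-set contribution, the discrepancy between $p_1, p_2$ and $p$, and the polynomial prefactors must all be strictly smaller than the $\delta k$ gap provided by the $(1+\eps)^2$ factor in $p_1^2$, which will hold provided $\delta$ is chosen sufficiently small in terms of $\eps$.
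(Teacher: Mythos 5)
Your overall architecture — reduce to independence in $\G_S$, apply Janson, partition $S$ by $r(S)$ and by whether $e(\G_S)$ is deficient, optimise over the three regimes of $\lambda^{(\delta)}(G)$ — follows the paper, and the Janson/case analysis you sketch for the $S$-sum is essentially what the paper does. However, there is a genuine gap in your treatment of $T$, and it is exactly the step where the paper's proof departs most from a naive union bound.

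You union-bound over all $T \subset \O$ with $|T| = j \le k$, which costs $\binom{n}{j} p_1^{j}$; taking $j = k$ gives a factor of order $\binom{n}{k} p^k = n^{k/2 + o(k)}$. Combined with the $S$-sum (another $\binom{n}{k} p^k = n^{k/2 + o(k)}$), the prefactor in front of the Janson exponential is $n^{k + o(k)}$, while for a typical $S$ with $r(S) = m(S) = 0$ Janson yields $\exp\bigl(-p^2 \tfrac{3k}{2} n (1 - o(1))\bigr) = n^{-\frac{3}{2} C k + o(k)}$. At $C = (1+\eps) \lambda^{(\delta)}(G) = (1+\eps)/3$ this gives $n^{k - (1+\eps)k/2 + o(k)}$, which diverges. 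Your final display writes $p^{3k + o(k)} \binom{n}{k}$ inside the $S$-sum, but your intermediate steps only produce $p_2^k \cdot p_1^k = p^{2k + o(k)}$ for the $S$- and $T$-inclusion events; the extra $p^k$ is unaccounted for, and it is precisely the factor that makes or breaks the bound.

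The paper supplies this missing $p^k$ via Claim 2 of~\cite{BMS1}: if $\B_k^\O(A)$ holds, one may choose $T$ minimal, take a maximal matching from $T$ into $A \setminus T$ in $\G_S$, and let $U$ be the set of matched endpoints in $A \setminus T$. This produces a \emph{good triple} $(S,T,U)$ with $U \subset A \cap \O$, $|U| \le |T| \le k$, and $T \subset N_{\G_S}(U)$. Bounding the expected number of such triples then charges $p^{|U|}$ for $U \subset A$ and only $\binom{3k|U|}{|T|}$ for the placement of $T$ inside $N_{\G_S}(U)$, so that $\Ex[W(S,\ell,j)] \le (3 e^2 p^2 n)^k = n^{o(k)}$ rather than $\binom{n}{k} p^k = n^{k/2 + o(k)}$. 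This is the $n^{k/2}$ you are missing. (In the regime $k \ge \sqrt{\delta}/p$ your naive count does suffice, because there Janson's second form gives a super-polynomial bound $e^{-\Omega(\sqrt{\delta} p n)}$ that absorbs it; but for $k$ up to $\sqrt{\delta}/p$ — the range that actually determines the threshold — the good-triple structure is indispensable.) Without it, your proof does not close.
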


Let us denote by $\Pr_{p^\pm} = \Pr^\O_{p^\pm}$ the probability distribution in Lemma~\ref{lem:BMSmethod}, in which each element of $\O$ is chosen (independently) with probability $(1-\delta)p$ and each element of $\E$ is chosen with probability $(1+\delta)p$. Note that the event $\B^\O_k(A)$ is increasing in $A \cap \E$ and decreasing in $A \cap \O$, so $\Pr_p\big(\B^\O_k(A) \big) \le \, \Pr_{p^\pm}\big(\B^\O_k(A) \big)$ for every $\delta \ge 0$.

\begin{proof}[Proof of Lemma~\ref{lem:BMSmethod}]
The proof of the lemma follows closely the method of Balogh, Morris and Samotij~\cite[Section~5]{BMS1}, and so we shall skip some of the details. We will bound the expected number of {\emph{good triples}} $(S,T,U)$ with the following properties:
 \begin{itemize}
  \item[$(i)$] $S \subset A \cap \E$ with $|S| = k$, \\[-2ex]
  \item[$(ii)$]  $T,U \subset A \cap \O$ are disjoint sets with $|U| \le |T| \le k$,\\[-2ex]
  \item[$(iii)$] $(A \cap \O) \setminus T$ is an independent set in $\G_S$, \\[-2ex]
  \item[$(iv)$] $T \subset N_{\G_S}(U)$.
  \end{itemize}
It was shown in~\cite[Claim~2]{BMS1} that if $\B^\O_k(A)$ holds, then there exists such a triple. Indeed, this follows by first  taking $T$ minimal, and then taking a maximal matching $M$ from $T$ to $A \setminus T$ in $\G_S$. We set $U$ equal to the set of vertices in $A \setminus T$ that are incident to $M$.

Let $Z(k,\ell,j,m,r)$ denote the number of such triples $(S,T,U)$ with $|S| = k$, $|T| = \ell$, $|U| = j$, $m(S) = m$ and $r(S) = r$. We note that by definition $2m+r\le k$, and define
$$Z_k \, := \, \sum_{\ell = 0}^k \sum_{j=0}^{\ell} \sum_{m = 0}^{k/2} \sum_{r = 0}^{k - 2m}  Z(k,\ell,j,m,r).$$
By the discussion above,
\begin{equation}\label{PatmostZ}
\Pr\big(\B^\O_k(A) \big) \, \le \, \Ex\big[ Z_k \big] \, = \, \sum_{\ell = 0}^k \sum_{j=0}^{\ell} \sum_{m = 0}^{k/2} \sum_{r = 0}^{k - 2m}   \Ex\big[ Z(k,\ell,j,m,r) \big],
\end{equation}
and therefore it will suffice to bound $\Ex[ Z(k,\ell,j,m,r) ]$ for each $k$, $\ell$, $j$, $m$ and $r$. Let $p^2 n = C \log n$, where $C \ge (1 + \eps ) \lambda^{(\delta)}(G)$. We will prove that
\begin{equation}\label{eq:Zbounds:claim}
\Ex[ Z(k,\ell,j,m,r) ] \, \le \, \left\{
\begin{array} {c@{\quad}l}
n^{-\delta k} & \textup{if} \quad k \le \delta / p \\[+1ex]
e^{-\sqrt{n}} & \textup{otherwise.}
\end{array}\right.
\end{equation}

Let us fix $k$, $\ell$, $j$, $m$ and $r$, and count the triples $(S,T,U)$ that contribute to $Z(k,\ell,j,m,r)$.  First, for each $S \subset \E$ and $\ell,j \in \N$, let $W(S,\ell,j)$ denote the number of disjoint pairs $(T,U)$ such that $T,U \subset A \cap \O$ and $T \subset N_{\G_S}(U)$, with $|T| = \ell$ and $|U| = j$. It was proved in~\cite{BMS1} that if $|S| = k$ and $0 \le j \le \ell \le k \le \delta pn$, then
$$\Ex\big[ W(S,\ell,j) \big] \, \le \, (3e^2p^2n)^k \, \ll \, \big( C \log n \big)^{2k} \, = \, n^{o(k)}$$
assuming that $C = n^{o(1)}$, as we may since the case $C \gg 1$ was already dealt with in~\cite{BMS1}.\footnote{Alternatively, we may simply carry this factor of $C^{2k}$ through the proof, and perform an easy but tedious calculation later on.}

Let $\S(k,m,r)$ denote the collection of sets $S \subset \E$ with $|S| = k$, $m(S) = m$ and $r(S) = r$. If $(S, T, U)$ is good, then no edge of the graph
$$\G_{S,T,U} \, := \, \G_S\big[ \O \setminus (T \cup U) \big]$$
has both its endpoints in $A$. Since the vertex set of $\G_{S,T,U}$ is disjoint from $S \cup T \cup U$, it follows that the events $e(\G_{S,T,U}[A]) = 0$ and $S \cup T \cup U \subset A$ are independent. Therefore,
\begin{align}
\Ex\big[ Z(k,\ell,j,m,r) \big] & \, \le \sum_{S \in \S(k,m,r)} \Pr(S \subset A) \cdot \Ex\big[ W(S,\ell,j) \big] \cdot \max_{T,U} \left\{ \Pr\Big( e\big( \G_{S,T,U}[A] \big) = 0 \Big) \right\} \nonumber\\
& \, \le \, p^k \cdot n^{o(k)} \sum_{S \in \S(k,m,r)} \max_{T,U} \left\{ \Pr\Big( e\big( \G_{S,T,U}[A] \big) = 0 \Big) \right\}, \label{eq:Zbound}
\end{align}
where the maximum is taken over all pairs $(T,U)$ as in the definition of $W(S,\ell,j)$. We will bound the probability that $A$ is an independent set in $\G_{S,T,U}$ using Janson's inequality. Indeed, let
$$\mu \,:=\, p^2 e\big( \G_{S,T,U} \big) \qquad \textup{and} \qquad \Delta \,:= \sum_{v \in \O \setminus (T \cup U)} p^3 \binom{d(v)}{2},$$
where $d(v)$ denotes the degree of $v$ in $\G_{S,T,U}$.

We break into two cases, depending on the number of elements of order 2 in $G$.

\medskip
\noindent \textbf{Case 1:} $r(G) \le \delta n$.
\medskip

For each $S \in \S(k,m,r)$ let us choose a subset $\hat{S} \subset S$ with $|\hat{S}| = k - m$, $r(\hat{S}) = r$ and $m(\hat{S}) = 0$. Applying Proposition~\ref{prop:edge:counts} to $\hat{S}$, it follows that
\begin{equation}\label{eq:eGSTU}
e(\G_{S,T,U}) \, \ge \, e(\G_{\hat{S}}) - O(k^2) \, \ge \, \left( \ds\frac{3(k-m) - r}{2} \right) n - O\big( r(G) \cdot k^2 \big),
\end{equation}
and that, for every $4\delta \le a \le 1$, the number of sets $\hat{S} \in \S(k-m,0,r)$ with
\begin{equation}\label{eq:eGhat}
e(\G_{\hat{S}}) \, \le \, \left( \ds\frac{3(k - m) - r}{2} - ak \right) n
\end{equation}
is at most $\big(6/\delta^2\big)^k \big( n / k \big)^{k - (a/2 - \delta) k}$. Moreover, for each such set $\hat{S}$ there are at most $2^k$ corresponding sets $S \in \S(k,m,r)$.
There are three sub-cases to consider: \medskip

$(a)$ Suppose first that $k \le \min\big\{ \sqrt{\delta} / p, \, \delta n / r(G) \big\}$. Then, by~\eqref{eq:eGSTU},
$$\mu \, \ge \, \left( \ds\frac{3(k-m) - r}{2} - O\big( \delta k \big) \right) p^2 n \qquad \textup{and} \qquad \Delta \, = \, O\big( k^2 p^3 n \big) = O\big( \sqrt{\delta}k p^2 n \big),$$
since $d(v) \le 3k$ for every $v \in V(\G_{S,T,U})$. Thus, by Janson's inequality, it follows that
$$\Pr\Big( e\big( \G_{S,T,U}[A] \big) = 0 \Big) \, \le \, \exp\bigg( - \left( \ds\frac{3(k-m) - r}{2} - O\big(\sqrt{\delta}k   \big) \right) p^2 n \bigg),$$
and hence, by~\eqref{eq:Zbound},
$$\Ex\big[ Z(k,\ell,j,m,r) \big] \, \le \, p^k \cdot r(\E)^r \cdot n^{k - m - r + o(k)} \cdot \exp\bigg( - \left( \ds\frac{3(k-m) - r}{2} - O\big( \sqrt{\delta}k \big) \right) p^2 n \bigg).$$
Since $p = n^{-1/2 + o(1)}$, $r(G) = n^{\alpha(G) + o(1)}$ and $p^2 n = C \log n$, it follows that
\begin{multline*}
\frac{\log \Ex\big[ Z(k,\ell,j,m,r) \big]}{\log n} \, \le \, \frac{k}{2} - m - \big( 1 -  \alpha(G) \big) r - C \bigg( \frac{3(k-m) - r}{2} - O\big( \sqrt{\delta}k \big) \bigg) + o(k)\\
 \, \le \, \bigg( \frac{1 - 3C}{2} \bigg) k \,-\, \bigg( \frac{2 - 3C}{2} \bigg) m \,+\, \bigg(  \alpha(G) - \frac{2 - C}{2} \bigg) r \,+\, O\big( C\sqrt{\delta}k \big) \, \le \, - \frac{\eps k}{4}.
\end{multline*}
Indeed, the second term is decreasing in $m$ for all $C \le 2/3$,\footnote{If $C \ge 2/3$ then simply note that the previous line is decreasing in $C$, since $3(k - m) - r \ge 2k - m \ge k$.} and we have (considering the cases $r = 0$ and $r = k$ separately) $\frac{1 - 3C}{2} \le - \eps/2$ and $\frac{1 - 3C}{2} + \alpha(G) - \frac{2 - C}{2} \le - \eps/3$, since (by assumption) we have $C \ge (1 + \eps) \max\big\{ 1/3, \alpha(G) - 1/2 \big\}$.\medskip

$(b)$ Next, suppose that $k \ge \delta n / r(G)$ but $k \le \sqrt{\delta} / p$. We partition the space according to the size of $e(\G_{\hat{S}})$: to be precise, we define $i = i\big(\hat{S}\big)$ by the inequalities
$$e(\G_{\hat{S}}) \in \left( \ds\frac{3(k - m) - r}{2} - \delta \big( 2 i \pm 1 \big) \big( k - m \big) \right) n.$$
Since $(1 - \delta)(k-m)n/2 \le e(\G_{\hat{S}}) \le (3(k-m)n - r)/2$ by Observation~\ref{obs:edge_lower_bound} and Proposition~\ref{prop:edge:counts}, we have $0 \le 2 \delta i(k-m) \le (1 + \delta)(k - m) - r/2$ for every set $\hat{S}$. Summing over $i$\footnote{The case $i = O(1)$ was already covered by the proof in part~$(a)$.}, applying Janson's inequality as in case~$(a)$, and using~\eqref{eq:eGhat}, we obtain
$$\Ex\big[ Z(k,\ell,j,m,r) \big] \, \le \, n^{O(\sqrt{\delta}k)} \sum_{i \ge 3} p^k \left( \frac{n}{k} \right)^{k-m- a_i/2} \exp\bigg( - \left( \ds\frac{3(k - m) - r}{2} - a_i \right) p^2 n \bigg),$$
where $a_i = 2\delta i(k-m)$. Substituting $p = n^{-1/2 + o(1)}$ and $p^2 n = C \log n$, and using the bound $k \ge n^{1 - \alpha(G) + o(1)}$, it follows that
$$\frac{\log \Ex\big[ Z(k,\ell,j,m,r) \big]}{\log n} \le  \max_a \bigg\{ - \frac{k}{2} \,+\, \alpha(G)\bigg( k - m - \frac{a}{2} \bigg) \,-\, C\bigg(\frac{3(k-m)-r}{2} - a \bigg) \bigg\} \,+\, O\big(\sqrt{\delta}k\big).$$

To bound the right-hand side, it suffices to check the extremal points. When $a = 0$, we note that $r \le k-m$ and $\alpha(G)-C \le 1/2-\eps/3$ to obtain a bound of
$$-k + 2(\alpha(G)-C)(k-m)+O\big(\sqrt{\delta}k\big) \le -\frac{\eps k}{4}.$$
At the other extreme, when $a = (1 + \delta)(k - m) - r/2$, we obtain analogously that
$$-k + (\alpha(G)-C)(k-m)+\frac{\alpha(G)k}{2}+O\big(\sqrt{\delta}k\big) \le -\frac{\eps k}{4}.$$

$(c)$ Finally, suppose that $k \ge \sqrt{\delta} / p$. Note first that $e(\G_{S,T,U}) \ge e(\G_{\hat{S}}) - O(k^2) = \Omega(kn)$. The inequality here is as in \eqref{eq:eGSTU}, whereas the equality is by Observation~\ref{obs:edge_lower_bound}. We thus have $$\frac{\mu}{\Delta} = O\left(\frac{n}{p\cdot e(\G_{S,T,U})}\right) = O\left(\frac{1}{\sqrt{\delta}}\right)\qquad \textrm{ and }\qquad \frac{\mu^2}{\Delta} = \Omega\left(p \cdot \frac{e(\G_{S,T,U})^2}{k^2n}\right) = \Omega(pn).$$ This follows because $\Delta = O(k^2 p^3 n)$, since $d(v) \le 3k$ for every $v \in V(\G_{S,T,U})$, and $\Delta = \Omega\big( p^3 e(\G_{S,T,U})^2 / n \big)$, by convexity. Janson's inequality then implies that
$$\Pr\Big( e\big( \G_{S,T,U}[A] \big) = 0 \Big) \, = \, e^{-\Omega\left(p n \sqrt{\delta}\right)},$$
from which it follows immediately that
\begin{align*}
\Ex\big[ Z(k,\ell,j,m,r) \big] \, & \le \, p^{k+\ell+j}\binom{n}{k}\binom{n}{\ell}\binom{n}{j}e^{-\Omega\left(p n \sqrt{\delta}\right)}
\\& \le \, p^{3k} \binom{n}{k}^3 e^{-\Omega\left(p n \sqrt{\delta}\right)} \, \le \, e^{-\Omega\left(p n \sqrt{\delta}\right)} \le \, e^{- 2\sqrt{n}},
\end{align*}
since $k \le \delta p n$. This completes the proof of~\eqref{eq:Zbounds:claim} in the case $r(G) \le \delta n$.

\medskip
\noindent \textbf{Case 2:} $r(G) \ge \delta n$.
\medskip

We now repeat the calculation above, replacing the bounds of Proposition~\ref{prop:edge:counts} with those of Proposition~\ref{prop:edge:counts:bigR}. Suppose first that $k \le \sqrt{\delta} / p$, and partition the space according to the maximum $s \in \{0,\ldots,k\}$ such that
$$e\big( \G_{\hat{S}} \big) \, \ge \, s \bigg( n - \frac{r(\O)}{2} \bigg).$$
By Proposition~\ref{prop:edge:counts:bigR}, there are at most $(12/\delta)^k (n/k)^s = O\big(n^{s + \sqrt{\delta}k}\big)$ such sets $S$ with $|S| = k$. Applying Janson's inequality, we obtain\footnote{When $s = k$, we trivially bound the number of sets $Z$ such that $e(\G_{\hat{S}}) \ge k \left(n - \frac{r(\O)}{2}\right)$ by $n^k$.}
$$\Ex\big[ Z(k,\ell,j,m,r) \big] \, \le \, n^{O(\sqrt{\delta}k)} \sum_{s = 0}^k p^k \cdot n^s \cdot \exp\bigg( - p^2 s \bigg( n - \frac{r(\O)}{2} \bigg) \bigg),$$
and hence
$$\frac{\log \Ex\big[ Z(k,\ell,j,m,r) \big]}{\log n} \, \le \, \max_s \bigg\{ s - C s \bigg( \frac{4 - \beta(G)}{4} \bigg) \bigg\} - \frac{k}{2} + O\big(\sqrt{\delta}k\big) \, \le \, - \frac{\eps k}{4}$$
since $C \ge (1+\eps)\cdot2 / \big( 4 - \beta(G) \big)$. The case $k \ge \sqrt{\delta} / p$ is exactly the same as case~$(c)$, above.

  Having bounded $\Ex\big[ Z(k,\ell,j,m,r) \big]$ in all cases, the result now follows easily by summing over $\ell$, $j$, $m$ and $r$. Indeed, by~\eqref{PatmostZ}, we have
$$\Pr\big(\B^\O_k(A) \big) \, \le \, \sum_{\ell = 0}^k \sum_{j=0}^{\ell} \sum_{m = 0}^{k/2} \sum_{r = 0}^{k - 2m}  \Ex\big[ Z(k,\ell,j,m,r) \big] \, \le \, \max\left\{ n^{-\delta k}, e^{-\sqrt{n}}\right\}$$
as claimed. This completes the proof of the lemma.
\end{proof}

In order to deduce the 1-statement in Theorem~\ref{thm:even:abelian} from Lemma~\ref{lem:BMSmethod}, we cannot simply apply the union bound over odd cosets $\O \in \SF(G)$, since an even-order abelian group $G$ can have as many as $|G|$ distinct maximum-size sum-free subsets. On the other hand, Lemma~\ref{lem:BMSmethod} (together with Theorem~\ref{thm:approx:stability}) does imply that the maximum-size sum-free subset of $A$ contains (with high probability) only $O(1)$ even elements, and moreover that any \emph{given} collection of $n^{o(1)}$ odd cosets are all likely to be `locally' maximal.

Motivated by these observations, it is natural to attempt to partition the odd cosets into two classes, depending on whether or not $|A \cap \O|$ is within $O(1)$ of $\max_{\O'} |A \cap \O'|$. However, the random variables $\{ |A \cap \O'| : \O' \in \SF(G) \}$ are highly correlated with one another, due to the large (size $n/2$) overlap between different odd cosets, and for this reason the maximum is not easy to control.\footnote{The behaviour of the random variable $\max_{\O'} |A \cap \O'|$ is in fact somewhat mysterious, and we believe that it merits further investigation.}

We resolve this problem by coupling with the hypergeometric distribution, for which the positive correlation between the variables $|A \cap \O|$ is greatly diminished. (In fact, these variables are roughly pairwise independent of one another.) For each $0 \le m \le 2n$, let $\Pr_m$ denote the probability measure on subsets of $G$ obtained by choosing each subset of size $m$ with equal probability. Note that, since any pair of distinct subgroups $\E,\E' \subset G$ of index $2$ intersect in a subgroup of index $4$, the information that $|A \cap \O| \ge a$ (and therefore $|A \cap \E| \le m - a$) has very little influence on the probability that $|A \cap \O'| \ge a$.

This crucial property of the hypergeometric distribution is captured by the following lemma. Given $k \in \N$ and an odd coset $\O \in \SF(G)$, define $M^\O_k(A)$ to be the event that $|A \cap \O| \ge k$, and let
$$X_k(A) \, := \, \sum_{\O \in \SF(G)} \mathbbm{1}\big[ M^\O_k(A) \big]$$
denote the number of odd cosets $\O \in \SF(G)$ for which $|A \cap \O| \ge k$.

\begin{lemma}\label{le:existsb}
Fix $\gamma > 0$ and $h\in\N$, and let $1 \ll m \le 2n$. There exists $b=b(G,m) \in [m]$ such that the following holds. If $A$ is chosen according to $\Pr_m$, then
\begin{enumerate}
\item[$(a)$] $\Ex\big[ X_b(A) \big] \le n^\gamma$ and \smallskip
\item[$(b)$] $X_{b+h}(A) \ge 1$ with high probability.
\end{enumerate}
\end{lemma}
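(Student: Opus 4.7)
The plan is to choose $b$ so as to balance two competing requirements: on one hand, $\Ex[X_b]$ should be at most $n^\gamma$ (giving part~$(a)$); on the other, $\Ex[X_{b+h}]$ should remain so large that a second-moment argument forces $X_{b+h} \ge 1$ with high probability. By symmetry, if we set $q_k := \Pr_m(|A \cap \O| \ge k)$, then $\Ex[X_k] = (r(G) - 1) q_k$ is independent of the choice of $\O \in \SF(G)$. Take $b$ to be the smallest positive integer with $(r(G) - 1) q_b \le n^\gamma$, which yields $(a)$ immediately; if already $r(G) - 1 \le n^\gamma$, we may take $b=1$ and $(b)$ is trivial, since $\Ex[|A \cap \O|] = m/2 \to \infty$ gives $\Pr_m(|A\cap\O| \ge 1+h) = 1-o(1)$ for any single coset.

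The second step is a ratio estimate for the hypergeometric: since $|A\cap\O|$ is hypergeometric with parameters $(2n, n, m)$, a Stirling / local central limit calculation shows that $q_{k+1}/q_k = 1 - o(1)$ uniformly in the range $q_k \ge n^{-O(1)}$, provided $m \gg \log n$ (which holds throughout the relevant range of $p$). Applying this $h+1$ times,
$$\Ex[X_{b+h}] \, \ge \, (1 - o(1))^{h+1}\Ex[X_{b-1}] \, \ge \, (1 - o(1))n^\gamma \, \to \, \infty.$$

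The main obstacle is controlling the variance of $X_{b+h}$, where we must overcome the correlation between different odd cosets. The key structural observation is that, under $\Pr_m$, the variables $|A\cap\O|$ and $|A\cap\O'|$ have zero covariance for any two distinct odd cosets $\O \ne \O'$: since the subgroups $\E = G \setminus \O$ and $\E' = G \setminus \O'$ have index $2$, their intersection has index $4$ in $G$, giving a partition $G = P_1 \sqcup P_2 \sqcup P_3 \sqcup P_4$ with $P_1 = \O \cap \O'$, $P_2 = \O \setminus \O'$, $P_3 = \O' \setminus \O$, $P_4 = \E \cap \E'$, each of size $n/2$. Writing $X^{(i)} = |A \cap P_i|$, exchangeability together with $\sum_i X^{(i)} = m$ forces $\mathrm{Cov}(X^{(i)}, X^{(j)}) = -\Var(X^{(i)})/3$ for $i \ne j$, so $\mathrm{Cov}(X^{(1)} + X^{(2)}, X^{(1)} + X^{(3)}) = 0$.

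The final (and most delicate) step, which I expect to be the technical calculation relegated to the appendix, is to upgrade this zero covariance to an asymptotic tail factorisation
$$\Pr_m\big(|A\cap\O| \ge b+h,\, |A\cap\O'| \ge b+h\big) \, \le \, \big( 1 + o(1) \big) q_{b+h}^2$$
via a local central limit theorem for the multivariate hypergeometric $(X^{(1)}, \ldots, X^{(4)})$ in the relevant upper-tail regime (note that $q_{b+h} \ge n^{-O(1)}$ by the ratio estimates above, so the LCLT applies). Summing over the $\binom{r(G)-1}{2}$ unordered pairs then gives $\Var(X_{b+h}) \le \Ex[X_{b+h}] + o(1)\Ex[X_{b+h}]^2$, and Chebyshev's inequality yields $\Pr(X_{b+h} = 0) \le \Var(X_{b+h})/\Ex[X_{b+h}]^2 \to 0$, as required.
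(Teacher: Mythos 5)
Your proposal is correct and follows essentially the same route as the paper: take $b$ minimal with $\Ex[X_b] \le n^\gamma$ (with the trivial case when $r(G)$ is small), show via a local-CLT/Stirling estimate for the hypergeometric that the expectation barely moves when $b$ is increased by $h$, and then run a second-moment argument whose heart is the asymptotic tail factorisation $\Pr_m(M^\O_{k} \cap M^{\O'}_{k}) \sim \Pr_m(M^\O_{k})^2$, proved by expanding over the four cells of the index-$4$ intersection $\E \cap \E'$ and applying Stirling. The only substantive thing you add that the paper doesn't state is the exact zero-covariance identity $\mathrm{Cov}(|A\cap\O|,|A\cap\O'|)=0$ from exchangeability of the four equal-sized blocks; this is a nice motivating sanity check but is not itself used (covariance of the counts does not directly control correlation of the tail indicators), and the real work in both your sketch and the paper is the joint local limit calculation (the paper's Lemma on $\hat M^{\O,\O'}_{x,y,z}$ and the algebraic reparametrisation $(a,b,c)=(x+y,x+z,y+z)$), which you correctly identify as the technical appendix content.
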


The proof of Lemma~\ref{le:existsb} involves some straightforward but technical approximations of binomial coefficients, and so we defer it to an Appendix.

Let us denote by $\C_k^\O(A)$ the event that $|A \cap \O'| < |A \cap \O| + k$ for every $\O' \in \SF(G)$. We are now ready to complete the proof of our main theorem.

\begin{proof}[Proof of the $1$-statement in Theorem~\ref{thm:even:abelian}]
Let $\eps > 0$ be arbitrary, and let $0 < \delta < \delta_0(\eps)$ be sufficiently small and $n \ge n_0(\eps,\delta)$ be sufficiently large. Let $G$ be an abelian group with $2n$ elements, let $C \ge (1 + \eps)\lambda^{(\delta)}(G)$, set
$$p \, = \, \sqrt{ \frac{C \log n}{n} },$$
and let $A$ be a $p$-random subset of $G$. We shall prove that, with high probability as $n \to \infty$, we have $A \cap \O \in \SF(A)$ for some $\O \in \SF(G)$.

Indeed, let $B \in \SF(A)$ be a maximum-size sum-free subset of $A$, and note that, by Chernoff's inequality, and since $A \cap \O$ is sum-free for every $\O \in \SF(G)$, we have
\begin{equation}\label{eq:Chernoff:B}
|B| \, \ge \, \left( \frac{1}{2} - \delta \right) p |G|
\end{equation}
with high probability as $n \to \infty$. Therefore, applying Theorem~\ref{thm:approx:stability},  we deduce\footnote{Note that $p \ge C / \sqrt{n}$ since $n \ge n_0(\eps,\delta)$ is sufficiently large.} that, with high probability, we have $|B \setminus \O| \le \delta pn$ for some $\O \in \SF(G)$. Therefore,
\begin{align}
& \Pr_p\bigg( \bigcap_{\O \in \SF(G)} \big\{ A \cap \O \not\in \SF(A) \big\} \bigg) \, \le \, \Pr_p\bigg( \bigcup_{\O \in \SF(G)} \bigcup_{k=1}^{\delta p n} \Big( \B_k^\O(A) \cap \C_k^\O(A) \Big) \bigg) \,+\, o(1) \nonumber \\
& \hspace{1.5cm} \le \sum_{m = (1 - \delta^2) 2 p n}^{(1 + \delta^2) 2 p n} \Pr_m\bigg( \bigcup_{\O \in \SF(G)} \bigcup_{k=1}^{\delta p n} \Big( \B_k^\O(A) \cap \C_k^\O(A) \Big) \bigg) \cdot \Pr_p\big( |A| = m \big) \,+\, o(1),\label{eq:coupling:hyper}
\end{align}
where we again used Chernoff's inequality. Let $b = b(G,m) \in [m]$ be given by Lemma~\ref{le:existsb} (with $h = 1/\delta^2$) so, with high probability, we have $|A \cap \O'| \ge b + 1/\delta^2$ for some $\O' \in \SF(G)$. Note that if such an $\O'$ exists, then $\C_k^\O(A)$ implies that either $|A \cap \O| \ge b$ or $k \ge 1/\delta^2$.

Let us first bound the probability when $k \ge 1/\delta^2$. Indeed, by Hoeffding's inequality (see, e.g.,~\cite{Chvatal}), we have
\begin{equation}\label{eq:probB:klarge1}
\Pr_m\big( \B^\O_k(A) \big)  \, = \, \sum_{i = m/2 - \delta^2 m}^{m/2 + \delta^2 m} \Pr_m\Big( \B^\O_k(A) \,\big|\, |A \cap \E| = i \Big) \Pr_m\big( |A \cap \E| = i \big) + o\bigg( \frac{1}{n^3} \bigg).
\end{equation}
Moreover the event $\B^\O_k(A)$ is increasing in $A \cap \E$ and decreasing in $A \cap \O$, and therefore (recalling from Lemma~\ref{lem:BMSmethod} the definition of $\Pr_{p^\pm}$), we have
\begin{align}
\Pr_m\Big( \B^\O_k(A) \,\big|\, |A \cap \E| = i \Big) & \, \le \, \Pr_{p^\pm}\Big( \B^\O_k(A) \,\Big|\, \big( |A \cap \E| \ge i \big) \cap \big( |A \cap \O| \le m - i \big) \Big) \nonumber\\
& \, \le \, 2 \cdot \Pr_{p^\pm}\big( \B^\O_k(A) \big) \, \le \, 2 \cdot n^{-1/\delta} \, \ll \, \frac{1}{n^3} \label{eq:probB:klarge2}
\end{align}
for every $k \ge 1/\delta^2$, by Lemma~\ref{lem:BMSmethod}. Indeed, the first inequality follows since $p^\pm$ chooses sets $A$ uniformly given $|A \cap \E|$ and $|A \cap \O|$. To see the second inequality, simply note that $\Pr_{p^\pm}\big( ( |A \cap \E| \ge i ) \cap ( |A \cap \O| \le m - i ) \big) \ge 1/2$ for every $i \le m/2 + \delta^2 m \le pn + 3\delta^2 pn$.

Next, let us bound the probability when $|A \cap \O| \ge b$. Similarly to above, we have
$$\Pr_m\Big( \B^\O_k(A) \cap \big( |A \cap \O| \ge b \big) \Big) \,=\, \sum_{i = 0}^{m - b} \Pr_m\Big( \B^\O_k(A) \,\big|\, |A \cap \E| = i \Big) \cdot \Pr_m\big( |A \cap \E| = i \big),$$
and moreover
$$\Pr_m\Big( \B^\O_k(A) \,\big|\, |A \cap \E| = i \Big) \, \le \, 2 \cdot \Pr_{p^\pm}\big( \B^\O_k(A) \big) \, \le \, 2 \cdot n^{-\delta},$$
for every $k \ge 1$, by~\eqref{eq:probB:klarge2} and Lemma~\ref{lem:BMSmethod}, and
$$\Ex_m\big[ X_b(A) \big] \, = \, \sum_{\O \in \SF(G)} \sum_{i = 0}^{m - b} \Pr_m\big( |A \cap \E| = i \big) \, \le \, n^{\delta/2},$$
by Lemma~\ref{le:existsb}$(a)$. Therefore
\begin{equation}\label{eq:probB:ksmall}
\sum_{\O \in \SF(G)} \Pr_m\Big( \B^\O_k(A) \cap \big( |A \cap \O| \ge b \big) \Big) \, \le \, 2 \cdot n^{-\delta/2}
\end{equation}
for evey $k \ge 1$. Combining~\eqref{eq:probB:klarge1},~\eqref{eq:probB:klarge2} and~\eqref{eq:probB:ksmall}, it follows that
$$\Pr_m\bigg( \bigcup_{\O \in \SF(G)} \bigcup_{k=1}^{\delta p n} \Big( \B_k^\O(A) \cap \C_k^\O(A) \Big) \bigg) \le \, 2 \cdot \sum_{k = 1}^{1/\delta^2}  n^{-\delta/2} + \sum_{k = 1/\delta^2}^{\delta p n} \sum_{\O \in \SF(G)} \frac{1}{n^3} \, \le \, n^{-\delta/3}$$
for every $m \in (1 \pm \delta^2) 2 p n$, and every sufficiently large $n$. Hence, by~\eqref{eq:coupling:hyper}, we have
$$\Pr_p\bigg( \bigcap_{\O \in \SF(G)} \big\{ A \cap \O \not\in \SF(A) \big\} \bigg) \, = \, o(1),$$
as required.
\end{proof}

\appendix

\section{Lemmas on the hypergeometric distribution}

In this appendix we will prove Lemmas~\ref{lem:M:nice} and~\ref{le:existsb}. We begin with the latter.

\subsection{Proof of Lemma~\ref{le:existsb}}

We are required to prove that there exists $b = b(G,m) \in [m]$ with the following properties: at most $n^{o(1)}$ odd cosets are expected to contain at least $b$ elements of $A$, but with high probability some odd coset contains at least $b + \omega$ elements of $A$, where $\omega \to \infty$ as $n \to \infty$. For the proof, it will be convenient to shift the notation by $m/2$ as follows: For each $k \in \N$ and each $\O \in \SF(G)$, let us denote by $M^\O_k(A)$ the event that $|A \cap \O| \ge m/2 + k$, and by
$$X_k(A) \, = \, \sum_{\O \in \SF(G)} \mathbbm{1}\big[ M^\O_k(A) \big]$$
the number of odd cosets $\O \in \SF(G)$ for which $|A \cap \O| \ge m/2 + k$.

The main step in the proof of Lemma~\ref{le:existsb} is the following bound on the correlation between the events $M^\O_k(A)$. Here, and throughout this Appendix, we write $x\sim y$ to mean that $x/y\to 1$ under the given asymptotics.

\begin{lemma}\label{2ndmom}
Let $\O,\O' \in \SF(G)$ be distinct odd cosets, and let $k,m \in \N$ be such that $1 \ll k \ll m \ll k^2$. Then
\[
\Pr_m\big( M^\O_k(A) \cap M^{\O'}_k(A) \big) \sim \Pr_m\big( M^\O_k(A) \big)^2
\]
as $n \to \infty$.
\end{lemma}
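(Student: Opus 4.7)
The plan is to use the fact that any two distinct index-$2$ subgroups $\E, \E'$ of $G$ satisfy $\E \E' = G$, so $\E \cap \E'$ has index $4$. This partitions $G$ into four cosets of size $n/2$: $G_1 = \O \cap \O'$, $G_2 = \O \cap \E'$, $G_3 = \E \cap \O'$, $G_4 = \E \cap \E'$. Writing $X_i = |A \cap G_i|$, we have $|A \cap \O| = X_1 + X_2$ and $|A \cap \O'| = X_1 + X_3$, and $(X_1, X_2, X_3, X_4)$ is multivariate hypergeometric under $\Pr_m$.

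The key structural observation is that conditional on $|A \cap \O| = j$, the sets $A \cap \O$ and $A \cap \E$ are independent uniform subsets of $\O$ and $\E$ respectively. Hence $X_1 \sim \mathrm{Hyp}(n, n/2, j)$ and $X_3 \sim \mathrm{Hyp}(n, n/2, m-j)$ are conditionally independent, with
$$\Ex\big[X_1 + X_3 \,\big|\, |A \cap \O| = j\big] \,=\, \tfrac{j}{2} + \tfrac{m-j}{2} \,=\, \tfrac{m}{2}$$
regardless of $j$ --- the conditional mean of $|A \cap \O'|$ equals its unconditional mean. A direct calculation also yields
$$\Var\big(X_1 + X_3 \,\big|\, |A \cap \O| = j\big) \,=\, \frac{mn - j^2 - (m-j)^2}{4(n-1)},$$
which matches the unconditional variance $\Var(|A \cap \O'|) = m(2n-m)/(4(2n-1))$ up to an additive error of $O((j-m/2)^2/n)$.

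I would then split
$$\Pr_m(M^\O_k(A) \cap M^{\O'}_k(A)) \,=\, \sum_{j \ge m/2 + k} \Pr_m(|A \cap \O| = j) \cdot \Pr_m\big(X_1 + X_3 \ge \tfrac{m}{2} + k \,\big|\, |A \cap \O| = j\big),$$
and reduce the problem to showing that the inner conditional probability equals $(1+o(1)) \Pr_m(|A \cap \O'| \ge m/2 + k)$, uniformly in $j$ throughout the dominant range of the outer sum (which is $j - m/2 - k = O(m/k)$ by standard hypergeometric tail estimates). Granting this, the sum factors as $(1+o(1)) \Pr_m(M^\O_k) \Pr_m(M^{\O'}_k)$, which is the lemma.

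The main obstacle is establishing this inner tail equivalence uniformly in $j$, in the large-deviation regime $\sqrt{m} \ll k \ll m$ forced by $m \ll k^2$. I would do this by writing both probabilities exactly as sums of products of binomial coefficients --- the conditional one as a convolution of two independent hypergeometrics whose parameters depend on $j$, the unconditional one as a single hypergeometric --- and then performing a Stirling-based saddle-point analysis. The core input is that the two distributions share their mean exactly while their variances agree up to a relative error of $O(k^2/(mn)) = o(1)$ (using $k \ll m \le 2n$ to conclude $k^2 = o(mn)$), which lets one match leading-order exponents and polynomial prefactors. As a sanity check on the expected conclusion, one may verify that $\operatorname{Cov}(|A \cap \O|, |A \cap \O'|) = 0$ identically under $\Pr_m$: the positive correlation present under $\Pr_p$ is cancelled exactly by the constraint $|A| = m$, confirming that asymptotic tail independence is the right outcome.
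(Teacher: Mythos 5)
Your proposal uses the same structural foundation as the paper --- the four cosets $\O\cap\O'$, $\O\cap\E'$, $\E\cap\O'$, $\E\cap\E'$ of size $n/2$, the multivariate hypergeometric law for $(X_1,X_2,X_3,X_4)$ under $\Pr_m$, and the key observation that conditioning on $|A\cap\O|=j$ makes $A\cap\O$ and $A\cap\E$ independent and uniform --- but executes it by a genuinely different route. The paper computes $\Pr_m\bigl(\hat{M}^{\O,\O'}_{x,y,z}\bigr)$ directly as a product of three hypergeometric ratios, applies Lemma~\ref{le:hypgeoest} to each, and then changes variables via $(a,b,c)=(x+y,\,x+z,\,y+z)$; this decouples the exponent into $a^2+b^2+c^2$, the constraint restricts only $a\geq k$ and $b\geq k$ while $c$ ranges freely over one parity class, and a free sum in $c$ (Fact~\ref{fa:int}) gives the factorization $\sim\Pr_m(M^\O_k)^2$ outright, with no need for any uniformity in $j$. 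Your route instead conditions on $|A\cap\O|=j$ and argues that the conditional tail of $X_1+X_3$ matches the unconditional tail of $|A\cap\O'|$ uniformly over the dominant $j$-range. Your identification of the crucial facts is correct --- the conditional mean is exactly $m/2$ for every $j$, the covariance $\operatorname{Cov}(|A\cap\O|,|A\cap\O'|)$ vanishes identically, and the variance formulas are right --- but note that matching the first two moments is a heuristic, not a proof: in the moderate-deviation regime $\sqrt{m}\ll k\ll m$ the tail is exponentially sensitive, and what actually makes it work (and what your proposed Stirling analysis would have to confirm) is that because $j,\,m-j\ll n$, both the convolution $\mathrm{Hyp}(n,n/2,j)\ast\mathrm{Hyp}(n,n/2,m-j)$ and the single $\mathrm{Hyp}(2n,n,m)$ are approximately $\Bin(m,1/2)$ with cubic-and-higher cumulant corrections of size $O(k^3/m^2)$, which is $o(1)$ since implicitly $k\ll m^{2/3}$ (this same restriction is already hidden in the hypothesis $b^{3/2}\ll a$ of Lemma~\ref{le:hypgeoest}). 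So your approach is correct in outline and would yield a proof, but it trades the paper's exact algebraic factorization for a uniform-in-$j$ tail comparison that is more delicate to pin down; the paper's variable change is precisely what lets one avoid ever localizing $j$.
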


We begin by calculating $\Pr_m\big( M^\O_k(A) \big)$ asymptotically, using the following simple bounds.

\begin{lemma}\label{le:hypgeoest}
Let $a,b,N \in \N$ with $b^{3/2} \ll a \ll N$. Then
\[
\frac{\binom{N}{a+b}\binom{N}{a-b}}{\binom{2N}{2a}} \sim \frac{1}{\sqrt{\pi a}}\exp\left(-\frac{b^2}{a}\right)
\]
as $a,N \to \infty$.
\end{lemma}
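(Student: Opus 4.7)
\begin{psketch}
The plan is to recognize the left-hand side as the probability mass function, evaluated at $a+b$, of a hypergeometric random variable $H$ with population size $2N$, $N$ ``successes'' and sample size $2a$. This variable has mean $a$ and variance $\sigma^2 = 2a \cdot \tfrac{1}{2} \cdot \tfrac{1}{2} \cdot \tfrac{2N-2a}{2N-1} \sim a/2$ when $a \ll N$, and so the claimed asymptotic $\Pr(H = a+b) \sim (\pi a)^{-1/2} \exp(-b^2/a)$ is precisely what a local central limit theorem predicts. The hypothesis $b^{3/2} \ll a$ corresponds exactly to the range $b \ll \sigma^{4/3}$ within which the Gaussian approximation is valid (equivalently, $b^4/\sigma^6 \to 0$, so that the cubic term in the Taylor expansion of the log-density is negligible). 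Rather than invoke an off-the-shelf local CLT, I would give a direct Stirling-based calculation, separating the base case $b=0$ from the Gaussian decay in $b$.

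For the base case $b = 0$, Stirling's formula $n! \sim \sqrt{2\pi n}(n/e)^n$ yields
$$
\binom{N}{a}^2 \,\sim\, \frac{N}{2\pi a(N-a)} \cdot \frac{N^{2N}}{a^{2a}(N-a)^{2(N-a)}},
$$
and, using $(2N)^{2N} = 2^{2N}N^{2N}$, $(2a)^{2a} = 2^{2a}a^{2a}$, and similarly for $2(N-a)$, the same combinatorial factor appears in
$$
\binom{2N}{2a} \,\sim\, \sqrt{\frac{N}{4\pi a(N-a)}} \cdot \frac{N^{2N}}{a^{2a}(N-a)^{2(N-a)}}.
$$
Taking the quotient cancels the bulky factor and gives $\binom{N}{a}^2/\binom{2N}{2a} \sim (\pi a (1 - a/N))^{-1/2} \sim (\pi a)^{-1/2}$, using $a \ll N$. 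For the $b$-dependence, I would set $f(k) := \log\binom{N}{k}$ and Taylor-expand around $k = a$; by the symmetry of $f(a+b)+f(a-b)-2f(a)$ only even derivatives contribute, and Stirling gives
$$
f''(a) \,=\, -\frac{1}{a} - \frac{1}{N-a} + O\big(a^{-2}\big) \,\sim\, -\frac{1}{a}, \qquad f^{(4)}(a) \,=\, O\big(a^{-3}\big),
$$
so that
$$
f(a+b) + f(a-b) - 2f(a) \,=\, -\frac{b^2}{a} + O\bigg(\frac{b^4}{a^3}\bigg) \,=\, -\frac{b^2}{a} + o(1),
$$
since $b^{3/2} \ll a$ gives $b^4 \ll a^{8/3} \ll a^3$. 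Combining with the base case yields the claim.

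The main technical annoyance is not the leading Taylor analysis but rather the bookkeeping of lower-order Stirling corrections (the $\tfrac{1}{2}\log(2\pi k)$ terms) across all four factorials; one must check that these contribute only to the $(\pi a)^{-1/2}$ prefactor and introduce no $b$-dependent error. This reduces to verifying that $\tfrac{1}{2}\log\big((a+b)(a-b)\big) = \log a + O(b^2/a^2)$, which is $\log a + o(1)$ under our assumption $b^{3/2} \ll a$. Everything else is routine.
\end{psketch}
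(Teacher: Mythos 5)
Your proposal is correct and takes essentially the same route as the paper: the paper's own proof is a two-line remark that the lemma follows from Stirling's formula together with the expansion $\log(1+x) = x - \tfrac{x^2}{2} + O(|x|^3)$, which is exactly the computation your Taylor analysis of $f(k) = \log\binom{N}{k}$ around $k=a$ packages up (the decomposition into a $b=0$ base case and a symmetric even-derivative correction is a tidy way to organize the bookkeeping, and your identification of $b^{3/2} \ll a$ as the local-CLT window $b \ll \sigma^{4/3}$ is the right heuristic).

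One bookkeeping subtlety that you (and the paper) gloss over is worth making explicit. You correctly record $f''(a) = -\tfrac{1}{a} - \tfrac{1}{N-a} + O(a^{-2})$, but then silently drop the $-b^2/(N-a)$ contribution to $f''(a)\,b^2$ by invoking $f''(a) \sim -1/a$. That equivalence gives $f''(a)\,b^2 = -b^2/a + o(b^2/a)$, and $o(b^2/a)$ is \emph{not} $o(1)$ here, since $b^2/a$ is allowed to be as large as nearly $a^{1/3}$. What one actually needs is $b^2/(N-a) = o(1)$, i.e.\ $b^2 = o(N)$; this does not formally follow from $b^{3/2} \ll a \ll N$ alone (take, say, $a = N/\log N$ and $b$ just below $a^{2/3}$). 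It does hold comfortably in the regime where the lemma is applied in the paper, where $a = m/2 = n^{1/2+o(1)}$ and hence $b^2 \ll a^{4/3} = o(n) = o(N)$, but you should either add the hypothesis $b^2 \ll N$ (equivalently $a^{4/3} \ll N$, in view of $b \ll a^{2/3}$) or carry the extra factor $\exp\big(-b^2/(N-a)\big)$ through and discard it at the point of application. The remaining error terms you identify, $O(b^4/a^3)$ from $f^{(4)}$ and $O(b^2/a^2)$ from the Stirling prefactors, are indeed $o(1)$ under $b^{3/2} \ll a$, so the rest of the sketch is sound.
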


\begin{proof}
This is nothing more than an application of Stirling's formula
\[
n! \sim \sqrt{2\pi n}\left(\frac{n}{e}\right)^n,
\]
and the partial Taylor series
\[
\left|\log(1+x)-x+\frac{x^2}{2}\right| \le O\big(|x|^3\big),
\]
which is valid for all sufficiently small $|x|$.
\end{proof}

Let us denote by $\hat{M}^\O_x(A)$ the event that $|A \cap \O| = m/2 + x$, so $M^\O_k(A) = \ds\bigcup_{x \ge k} \hat{M}^\O_x(A)$.

\begin{lemma}\label{le:expX}
For every $\O \in \SF(G)$,
\[
\Pr_m\big( M^\O_k(A) \big) \sim \sqrt{\frac{2}{\pi m}} \sum_{x \ge k} \exp\left(-\frac{2x^2}{m}\right).
\]
\end{lemma}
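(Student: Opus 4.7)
\begin{psketch}
The plan is to establish this by combining the pointwise asymptotic of Lemma~\ref{le:hypgeoest} with a careful truncation of the sum. Since $|\O| = |\E| = n$ and $|A| = m$, the hypergeometric distribution gives
$$\Pr_m\big( \hat{M}^\O_x(A) \big) \,=\, \frac{\binom{n}{m/2+x}\binom{n}{m/2-x}}{\binom{2n}{m}}.$$
Applying Lemma~\ref{le:hypgeoest} with $N = n$, $a = m/2$ and $b = x$ yields the pointwise asymptotic
$$\Pr_m\big( \hat{M}^\O_x(A) \big) \,\sim\, \sqrt{\frac{2}{\pi m}}\exp\left(-\frac{2x^2}{m}\right),$$
which is valid (and, crucially, uniform in $x$) so long as $x^{3/2} \ll m$.

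I would then fix a cutoff $x_0 := m^{3/5}$, which lies comfortably inside the range of validity, and split the sum $\sum_{x \ge k}\Pr_m(\hat M^\O_x(A))$ at $x_0$. For $k \le x \le x_0$ the uniform asymptotic above immediately delivers
$$\sum_{k \le x \le x_0} \Pr_m\big( \hat{M}^\O_x(A) \big) \,\sim\, \sqrt{\frac{2}{\pi m}} \sum_{k \le x \le x_0} \exp\left(-\frac{2x^2}{m}\right),$$
and this accounts for essentially all of the sum on the right-hand side of the claimed asymptotic, since $\sum_{x > x_0}\exp(-2x^2/m)$ is super-polynomially small (by comparison with a geometric series).

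It remains to check that the probability tail $\sum_{x > x_0}\Pr_m(\hat M^\O_x(A)) = \Pr_m\big(|A \cap \O| > m/2 + x_0\big)$ is also negligible; a standard Chernoff-type inequality for the hypergeometric distribution suffices (alternatively, log-concavity of $x\mapsto\binom{n}{m/2+x}\binom{n}{m/2-x}$ combined with the estimate at $x_0$ does the job). In the regime of interest, namely $k \ll m^{2/3}$, both tails are dominated by the $x = k$ term of the main sum, and the claimed $\sim$ follows. The main technical point is the uniformity of the Lemma~\ref{le:hypgeoest} asymptotic across the interval $[k, x_0]$, but this reduces to tracking cumulative errors in Stirling's formula and the Taylor expansion $\log(1+y) = y - y^2/2 + O(|y|^3)$; since the relative error at each term is $O(x_0^3/m^2) = o(1)$, uniformity is automatic.
\end{psketch}
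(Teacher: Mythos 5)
Your argument follows the paper's proof exactly: decompose $\Pr_m\big(M^\O_k(A)\big) = \sum_{x\ge k}\binom{n}{m/2+x}\binom{n}{m/2-x}\big/\binom{2n}{m}$ and apply Lemma~\ref{le:hypgeoest} termwise with $N=n$, $a=m/2$, $b=x$. The paper's proof is a single line and silently glosses over the point you raise, namely that Lemma~\ref{le:hypgeoest} is only valid when $x^{3/2}\ll m$, so a truncation and tail estimate are genuinely needed; your treatment of this is correct and is the right way to make the $\sim$ precise. One small quibble about the specific cutoff: with $x_0 = m^{3/5}$, the discarded tail $\exp(-2x_0^2/m)=\exp(-2m^{1/5})$ is dominated by the leading term $\exp(-2k^2/m)$ of the main sum only when $k\ll m^{3/5}$, which is slightly narrower than the range $k\ll m^{2/3}$ you invoke; taking a cutoff nearer the ceiling, e.g. $x_0 = m^{2/3}/\log m$ (which still satisfies $x_0^{3/2}\ll m$), closes this gap without changing anything else in your argument.
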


\begin{proof}
Observe that
\[
\Pr_m\big( M^\O_k(A) \big) \, = \, \sum_{x \ge k} \Pr_m\big( \hat{M}^\O_x(A) \big) \, = \, \sum_{ x \ge k} \frac{\binom{n}{m/2+x}\binom{n}{m/2-x}}{\binom{2n}{m}}.
\]
The result now follows by applying Lemma~\ref{le:hypgeoest} with $N=n$, $a=m/2$ and $b=x$.
\end{proof}

The following bounds now follow easily.

\begin{lemma}\label{le:expXtheta}
For every $\O \in \SF(G)$,
\[
\Pr_m\big( M^\O_{k}(A) \big) \, = \, \Theta\left( \frac{\sqrt{m}}{k} \exp\left(-\frac{2k^2}{m}\right) \right).
\]
\end{lemma}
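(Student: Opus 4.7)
The plan is to deduce this directly from Lemma~\ref{le:expX}, which already reduces the question to estimating the Gaussian-type sum
\[
S(k,m) \; := \; \sum_{x \ge k} \exp\left(-\frac{2x^2}{m}\right).
\]
So it suffices to show that $S(k,m) = \Theta\big( (m/k) \exp(-2k^2/m) \big)$ under the assumption $1 \ll k \ll m \ll k^2$, and then plug this into the prefactor $\sqrt{2/(\pi m)}$ from Lemma~\ref{le:expX}.

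First, I would compare $S(k,m)$ to the integral $I(k,m) := \int_k^\infty e^{-2t^2/m}\,dt$. Since $e^{-2x^2/m}$ is decreasing in $x \ge 0$, we have $I(k,m) \le S(k,m) \le e^{-2k^2/m} + I(k,m)$, and the extra term $e^{-2k^2/m}$ is absorbed by $I(k,m)$ as long as $I(k,m) \gtrsim e^{-2k^2/m}$, which will be automatic from the estimate below. So up to constants $S(k,m) = \Theta(I(k,m))$.

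Second, I would evaluate $I(k,m)$ via the substitution $u = t\sqrt{2/m}$, giving
\[
I(k,m) \; = \; \sqrt{m/2} \int_{k\sqrt{2/m}}^\infty e^{-u^2}\,du.
\]
Set $a = k\sqrt{2/m}$. The hypothesis $m \ll k^2$ guarantees $a \to \infty$, so the standard Mills-ratio estimate
\[
\int_a^\infty e^{-u^2}\,du \; \sim \; \frac{e^{-a^2}}{2a} \qquad (a \to \infty)
\]
applies (it is proved in one line by integration by parts: $\int_a^\infty e^{-u^2}du = \frac{e^{-a^2}}{2a} - \int_a^\infty \frac{e^{-u^2}}{2u^2}du$, and the remainder is $O(e^{-a^2}/a^3)$). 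Substituting $a = k\sqrt{2/m}$ and $a^2 = 2k^2/m$ yields
\[
I(k,m) \; \sim \; \sqrt{m/2} \cdot \frac{e^{-2k^2/m}}{2k\sqrt{2/m}} \; = \; \frac{m}{4k}\exp\!\left(-\frac{2k^2}{m}\right),
\]
and in particular $I(k,m) = \Theta\big((m/k) e^{-2k^2/m}\big)$.

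Combining with $S(k,m) = \Theta(I(k,m))$ and Lemma~\ref{le:expX} gives
\[
\Pr_m\big(M^\O_k(A)\big) \; = \; \Theta\!\left( \sqrt{\frac{1}{m}} \cdot \frac{m}{k}\exp\!\left(-\frac{2k^2}{m}\right) \right) \; = \; \Theta\!\left( \frac{\sqrt{m}}{k}\exp\!\left(-\frac{2k^2}{m}\right) \right),
\]
as required. There is no real obstacle here: the only thing to check carefully is that the regime $k \ll m \ll k^2$ makes $a = k\sqrt{2/m}$ tend to infinity (so Mills' ratio is valid) while also ensuring $k \to \infty$ (so the sum-to-integral comparison is accurate); both are immediate from the hypotheses.
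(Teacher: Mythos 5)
Your proof is correct. You compare the tail sum $\sum_{x\ge k} e^{-2x^2/m}$ directly with the Gaussian integral and then invoke Mills' ratio in the regime $a = k\sqrt{2/m}\to\infty$, which is guaranteed by $m \ll k^2$; the paper instead substitutes $x \mapsto x+k$ in the sum, factoring out $e^{-2k^2/m}$ and reducing to the estimate $\sum_{x\ge 0}\exp(-4kx/m - 2x^2/m) = \Theta(m/k)$, which it argues discretely (geometric series with a negligible Gaussian correction on the relevant scale $x = O(m/k)$, using $m \ll k^2$). These are two standard and essentially equivalent ways to package the Gaussian tail asymptotics; yours passes through the continuous integral, the paper's stays discrete. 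Both use the same constraints $k\ll m$ (to make $m/k\to\infty$, so the boundary term $e^{-2k^2/m}$ is negligible and the prefactor survives) and $m\ll k^2$ (to make the Mills ratio, or equivalently the geometric decay, dominate). No gap; the conclusion and the role of the hypotheses are identical.
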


\begin{proof}
By Lemma~\ref{le:expX}, we have
\[
\Pr_m\big( M^\O_{k}(A) \big) \, = \, \Theta\left( \frac{1}{\sqrt{m}} \exp\left(-\frac{2k^2}{m}\right) \sum_{x\ge 0} \exp\left(-\frac{4kx}{m}-\frac{2x^2}{m}\right) \right).
\]
Now, the asymptotics $k \ll m \ll k^2$ imply that
\[
\sum_{x \ge 0} \exp\left(- \frac{4kx}{m} - \frac{2x^2}{m}\right) \,=\, \Theta\bigg( \frac{m}{k} \bigg),
\]
and the lemma follows immediately.
\end{proof}

When bounding the probability of $M^\O_k(A) \cap M^{\O'}_k(A)$, the following notation will be useful. Set
\[
\Lambda := \big\{ (x,y,z) \in \Z^3 \,:\, x+y \ge k, \,  x+z \ge k \big\},
\]
and given $\O, \O' \in \SF(G)$ and $x,y,z \in \Z$, denote by $\hat{M}^{\O,\O'}_{x,y,z}(A)$ the event that
$$|A \cap \O \cap \O'| = \frac{m}{4} + x, \quad |A \cap \O \cap \E' | = \frac{m}{4} + y, \quad \textup{and}  \quad |A \cap \O' \cap \E | = \frac{m}{4} + z,$$
where as usual $\E = G \setminus \O$ and $\E' = G \setminus \O'$.

\begin{lemma}\label{le:varX}
Let $\O,\O' \in \SF(G)$ be distinct odd cosets. Then
\[
\Pr_m\big( M^\O_k(A) \cap M^{\O'}_k(A) \big) \, \sim \, \frac{4\sqrt{2}}{(\pi m)^{3/2}} \sum_{(x,y,z)\in\Lambda} \exp\left(-\frac{2}{m}\Big( (x+y)^2+(x+z)^2+(y+z)^2 \Big) \right).
\]
\end{lemma}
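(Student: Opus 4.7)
\begin{psketch}
The plan is to compute each $\Pr_m(\hat{M}^{\O,\O'}_{x,y,z}(A))$ asymptotically and then sum over $(x,y,z) \in \Lambda$. Since $\E$ and $\E'$ are distinct index-$2$ subgroups, $\E \cap \E'$ has index $4$, so the four sets $\O \cap \O'$, $\O \cap \E'$, $\O' \cap \E$, $\E \cap \E'$ each have size $n/2$ and partition $G$. The event $\hat{M}^{\O,\O'}_{x,y,z}$ fixes the size of $A$ in each of these four parts (using $|A|=m$ to determine the fourth), so uniform selection gives
\[
\Pr_m\big(\hat{M}^{\O,\O'}_{x,y,z}(A)\big) \;=\; \frac{\binom{n/2}{m/4+x}\binom{n/2}{m/4+y}\binom{n/2}{m/4+z}\binom{n/2}{m/4-x-y-z}}{\binom{2n}{m}},
\]
and the decomposition $M^\O_k(A) \cap M^{\O'}_k(A) = \bigsqcup_{(x,y,z) \in \Lambda} \hat{M}^{\O,\O'}_{x,y,z}(A)$ is immediate from the identities $|A \cap \O| = m/2 + (x+y)$ and $|A \cap \O'| = m/2 + (x+z)$.

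To estimate the ratio, I would expand $\log\binom{n/2}{m/4+t}$ around $t=0$ via Stirling's formula. Writing $w = -x-y-z$, the four shifts satisfy $x+y+z+w = 0$, so the linear contributions $t \log\frac{n/2-m/4}{m/4}$ cancel in the product. The second-order contributions combine (using $m \ll n$) to
\[
-\frac{1}{2}\cdot\frac{n/2}{(m/4)(n/2-m/4)}\cdot(x^2+y^2+z^2+w^2) \;\sim\; -\frac{2}{m}\big(x^2+y^2+z^2+(x+y+z)^2\big),
\]
and the algebraic identity $x^2+y^2+z^2+(x+y+z)^2 = (x+y)^2+(x+z)^2+(y+z)^2$ gives the claimed exponent. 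The remaining prefactor $\binom{n/2}{m/4}^4/\binom{2n}{m}$ factorises as $[\binom{n/2}{m/4}^2/\binom{n}{m/2}]^2 \cdot [\binom{n}{m/2}^2/\binom{2n}{m}]$, and two applications of Lemma~\ref{le:hypgeoest} with $b=0$ (first at $(N,a) = (n/2,m/4)$, then at $(N,a) = (n,m/2)$) give $(4/(\pi m))^{1/2}$ and $(2/(\pi m))^{1/2}$ respectively, multiplying to $4\sqrt{2}/(\pi m)^{3/2}$.

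The main obstacle will be to lift the pointwise asymptotic to one for the sum, since $m \ll k^2$ forces $k \gg \sqrt{m}$ and we are evaluating well into the tail. I would split the sum at a cutoff slightly beyond $k$: within the main range, the $O(t^3/a^2)$ Taylor remainder in the Stirling expansion is negligible uniformly, so the pointwise $\sim$ produces exactly the claimed Gaussian weight summed over $\Lambda$; outside, crude ratio bounds on consecutive binomials combined with the Gaussian decay $\exp(-\Omega((x+y)^2/m))$ show that the tail contributes a vanishing fraction of the dominant term, by the same sort of geometric-series argument used in the proof of Lemma~\ref{le:expXtheta}.
\end{psketch}
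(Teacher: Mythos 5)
Your proposal is correct and follows essentially the same strategy as the paper: decompose $M^\O_k \cap M^{\O'}_k$ into the disjoint events $\hat{M}^{\O,\O'}_{x,y,z}$ indexed by $\Lambda$, estimate each probability, and sum. The one genuine difference is the route to the pointwise asymptotic. The paper factorises the probability as a product of three hypergeometric-type quotients,
\[
\frac{\binom{n}{m/2+x+y}\binom{n}{m/2-x-y}}{\binom{2n}{m}} \cdot \frac{\binom{n/2}{m/4+x}\binom{n/2}{m/4+y}}{\binom{n}{m/2+x+y}} \cdot \frac{\binom{n/2}{m/4+z}\binom{n/2}{m/4-x-y-z}}{\binom{n}{m/2-x-y}},
\]
and applies Lemma~\ref{le:hypgeoest} three times with $b = x+y$, $b = (x-y)/2$ and $b = (x+y+2z)/2$, obtaining the prefactor $4\sqrt{2}/(\pi m)^{3/2}$ and the exponent $-\frac{1}{m}\bigl(2(x+y)^2+(x-y)^2+(x+y+2z)^2\bigr)$ simultaneously, the latter then being simplified algebraically to the stated form. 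You instead extract the Gaussian factor directly via a second-order Stirling expansion (correctly using that the four shifts $x,y,z,-x-y-z$ sum to zero to kill the linear term, and the identity $x^2+y^2+z^2+(x+y+z)^2=(x+y)^2+(x+z)^2+(y+z)^2$), and reserve Lemma~\ref{le:hypgeoest} (with $b=0$, twice) just for the prefactor $\binom{n/2}{m/4}^4/\binom{2n}{m}$. Both computations are equivalent in content since Lemma~\ref{le:hypgeoest} is itself a Stirling estimate; the paper's packaging is a little slicker, but yours makes the cancellation mechanism more transparent. You are also somewhat more careful than the paper about why the pointwise $\sim$ can be applied termwise across the sum given that $k \gg \sqrt{m}$; the paper does this silently, and your proposed cutoff-plus-tail-bound is the right way to justify it.
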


\begin{proof}
Note first that
\[
\Pr_m\big( \hat{M}^{\O,\O'}_{x,y,z}(A) \big) \, = \, \frac{\binom{n}{m/2+x+y}\binom{n}{m/2-x-y}}{\binom{2n}{m}} \frac{\binom{n/2}{m/4+x}\binom{n/2}{m/4+y}}{\binom{n}{m/2+x+y}} \frac{\binom{n/2}{m/4+z}\binom{n/2}{m/4-x-y-z}}{\binom{n}{m/2-x-y}}.
\]
By Lemma~\ref{le:hypgeoest}, this is asymptotically equal to
\[
\sqrt{\frac{2}{\pi m}}\exp\left(-\frac{2(x+y)^2}{m}\right) \sqrt{\frac{4}{\pi m}}\exp\left(-\frac{(x-y)^2}{m}\right) \sqrt{\frac{4}{\pi m}}\exp\left(-\frac{(x+y+2z)^2}{m}\right),
\]
and this expression is equal to
\[
\frac{4\sqrt{2}}{(\pi m)^{3/2}} \exp\left(-\frac{2}{m} \Big( (x+y)^2+(x+z)^2+(y+z)^2 \Big) \right).
\]
Thus,
\begin{align*}
& \Pr_m\big( M^\O_k(A) \cap M^{\O'}_k(A) \big) \,=\, \sum_{(x,y,z)\in\Lambda} \Pr_m\big( \hat{M}^{\O,\O'}_{x,y,z}(A) \big) \\
& \hspace{2.5cm} \,\sim\, \frac{4\sqrt{2}}{(\pi m)^{3/2}} \sum_{(x,y,z)\in\Lambda} \exp\left(-\frac{2}{m}\Big( (x+y)^2+(x+z)^2+(y+z)^2 \Big) \right),
\end{align*}
as claimed.
\end{proof}

We are almost ready to prove Lemma~\ref{2ndmom}; we need one more well-known fact.

\begin{fact}\label{fa:int}
$$\sum_{x\in\Z} \exp\left(-\frac{2x^2}{m}\right) \sim \sqrt{\frac{\pi m}{2}}$$
as $m \to \infty$.
\end{fact}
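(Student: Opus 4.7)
\begin{psketch}
The plan is to recognize the sum as a Riemann sum for the Gaussian integral $\int_{-\infty}^{\infty} e^{-2x^2/m}\,dx$, whose value is exactly $\sqrt{\pi m/2}$ by the change of variables $u = x\sqrt{2/m}$ together with the standard identity $\int_{-\infty}^{\infty} e^{-u^2}\,du = \sqrt{\pi}$. The asymptotic step amounts to showing that the difference between the sum and the integral is $o(\sqrt{m})$.

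Concretely, I would set $h = \sqrt{2/m}$ (so $h \to 0$ as $m \to \infty$) and $f(u) = e^{-u^2}$, and write
\[
\sum_{x \in \Z} \exp\!\left(-\frac{2x^2}{m}\right) \;=\; \sum_{x \in \Z} f(hx) \;=\; \frac{1}{h} \sum_{x \in \Z} f(hx) \cdot h.
\]
The right-hand sum is a standard Riemann sum for $\int_{-\infty}^{\infty} f(u)\,du = \sqrt{\pi}$ with mesh $h$, so it would suffice to show it converges to $\sqrt{\pi}$ as $h \to 0$. Multiplying by $1/h = \sqrt{m/2}$ then yields the claim.

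To justify the Riemann sum convergence, observe that $f$ is unimodal, non-negative, and integrable, so for each $x \ge 1$ we can bound $\min_{u \in [hx, h(x+1)]} f(u) \le f(hx) \le \max_{u \in [h(x-1), hx]} f(u)$, and symmetrically for $x \le -1$. Summing these inequalities (using monotonicity on $[0,\infty)$ and $(-\infty,0]$) sandwiches $h \sum_{x \in \Z} f(hx)$ between $\int_{-\infty}^{\infty} f(u)\,du - h f(0)$ and $\int_{-\infty}^{\infty} f(u)\,du + h f(0)$. Since $f(0) = 1$ and $h \to 0$, the sum tends to $\sqrt{\pi}$, and hence the original sum is $(1+o(1))\sqrt{\pi m/2}$, as required.

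There is essentially no obstacle here; the result is a textbook Riemann-sum approximation of the Gaussian integral, and the only care required is in the simple monotonicity/sandwiching argument that controls the error by $h \cdot f(0) = O(1/\sqrt{m})$, which is negligible compared to the main term of order $\sqrt{m}$.
\end{psketch}
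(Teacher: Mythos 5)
The paper states this as a ``well-known fact'' and offers no proof at all, so there is no argument of the paper's own to compare against. Your Riemann-sum derivation is the standard one and is correct: the sandwiching $\int f - h f(0) \le h\sum_{x\in\Z} f(hx) \le \int f + h f(0)$, obtained from monotonicity of $f(u)=e^{-u^2}$ on each half-line, controls the error by $O(h) = O(m^{-1/2})$, which is negligible against the main term $\sqrt{\pi}$; multiplying through by $1/h = \sqrt{m/2}$ gives the claim. (For what it is worth, Poisson summation gives the sharper estimate $\sum_{x\in\Z} e^{-2x^2/m} = \sqrt{\pi m/2}\,\sum_{k\in\Z} e^{-\pi^2 m k^2/2}$, so the error is in fact exponentially small in $m$; but the cruder $o(\sqrt{m})$ bound you establish is all that is needed.)
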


\begin{proof}[Proof of Lemma~\ref{2ndmom}]
Observe that $(a,b,c)$ is equal to $(x+y,x+z,y+z)$ for some triple $(x,y,z)$ if and only if $a+b+c$ is even and
\[
(x,y,z)=\left(\frac{a+b-c}{2},\frac{c+a-b}{2},\frac{b+c-a}{2}\right).
\]
Letting
\[
\Lambda' := \big\{ (a,b,c) \in \Z^3 \,:\, a\ge k, \, b\ge k, \, a+b+c \text{ even} \big\},
\]
it follows that
$$\sum_{(x,y,z)\in\Lambda} \exp\left(-\frac{2}{m}\Big( (x+y)^2+(x+z)^2+(y+z)^2 \Big)\right) = \sum_{(a,b,c)\in\Lambda'} \exp\left(-\frac{2}{m} \big( a^2+b^2+c^2 \big) \right).$$
We may split up the right-hand side into separate sums according to the parity of $a+b$, and hence of $c$. Doing this, we may rewrite the sum as
\begin{align*}
& \sum_{\substack{a\ge k,\, b\ge k, \\ a+b\text{ even}}} \exp\left(-\frac{2(a^2+b^2)}{m}\right) \sum_{c\text{ even}} \exp\left(-\frac{2c^2}{m}\right) \\
& \hspace{4cm} + \sum_{\substack{a\ge k,\, b\ge k, \\ a+b\text{ odd}}} \exp\left(-\frac{2(a^2+b^2)}{m}\right) \sum_{c\text{ odd}} \exp\left(-\frac{2c^2}{m}\right).
\end{align*}
Since $m$ is large, we have
\[
\sum_{c\text{ odd}} \exp\left(-\frac{2c^2}{m}\right) \sim \sum_{c\text{ even}} \exp\left(-\frac{2c^2}{m}\right) \sim \frac{1}{2}\sum_c \exp\left(-\frac{2c^2}{m}\right) \sim \frac{1}{2}\sqrt{\frac{\pi m}{2}},
\]
where we have used Fact \ref{fa:int} for the final estimate. We also have
\[
\sum_{a\ge k,\, b\ge k} \exp\left(-\frac{2(a^2+b^2)}{m}\right) = \Bigg( \sum_{a\ge k} \exp\left(-\frac{2a^2}{m}\right) \Bigg)^2 \sim \frac{\pi m}{2} \cdot \Pr_m\big( M^\O_k(A) \big)^2
\]
for an arbitrary odd coset $\O \in \SF(G)$, by Lemma \ref{le:expX}. Putting all this together, we conclude that
\begin{equation}
\label{eq:exponential:sum}
\sum_{(x,y,z)\in\Lambda} \exp\left(-\frac{2}{m} \Big( (x+y)^2+(x+z)^2+(y+z)^2 \Big) \right) \sim \frac{(\pi m)^{3/2}}{4\sqrt{2}} \cdot \Pr_m\big( M^\O_k(A) \big)^2.
\end{equation}
We may now use our estimate for $\Pr_m\big( M^\O_k(A) \cap M^{\O'}_k(A) \big)$ from Lemma~\ref{le:varX}. Together with~\eqref{eq:exponential:sum}, this implies that
\[
\Pr_m\big( M^\O_k(A) \cap M^{\O'}_k(A) \big) \sim \Pr_m\big( M^\O_k(A) \big)^2,
\]
as required.
\end{proof}

Lemma~\ref{le:existsb} now follows by a straightforward application of the second moment method. For completeness we give the details.

\begin{lemma}\label{le:existsO}
If $\Exp\big[ X_k \big] \gg 1$, then $X_k \ge 1$ with high probability.
\end{lemma}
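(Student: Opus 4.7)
\begin{psketch}
The plan is a standard second moment argument. Writing $X_k = \sum_{\O \in \SF(G)} \mathbbm{1}\big[ M_k^\O(A) \big]$ and applying Chebyshev's inequality, we have
$$\Pr_m\big( X_k = 0 \big) \, \le \, \Pr_m\Big( |X_k - \Ex_m[X_k]| \ge \Ex_m[X_k] \Big) \, \le \, \frac{\Var_m(X_k)}{\Ex_m[X_k]^2},$$
so it suffices to show that $\Var_m(X_k) = o\big( \Ex_m[X_k]^2 \big)$.

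Next I would expand the second moment as a double sum over pairs of odd cosets,
$$\Ex_m\big[ X_k^2 \big] \, = \, \sum_{\O \in \SF(G)} \Pr_m\big( M_k^\O(A) \big) \,+\, \sum_{\substack{\O, \O' \in \SF(G)\\ \O \ne \O'}} \Pr_m\big( M_k^\O(A) \cap M_k^{\O'}(A) \big),$$
where the diagonal contributes exactly $\Ex_m[X_k]$. For each off-diagonal pair, Lemma~\ref{2ndmom} gives $\Pr_m\big( M_k^\O(A) \cap M_k^{\O'}(A) \big) \sim \Pr_m\big( M_k^\O(A) \big)^2$, and since this asymptotic holds uniformly in the cosets (the right-hand side depends only on $k$ and $m$), summing over all ordered pairs $(\O, \O')$ yields an off-diagonal contribution of $(1 + o(1)) \Ex_m[X_k]^2$. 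Combining, we obtain
$$\Var_m(X_k) \, = \, \Ex_m\big[ X_k^2 \big] - \Ex_m[X_k]^2 \, \le \, o\big( \Ex_m[X_k]^2 \big) + \Ex_m[X_k].$$

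Since by hypothesis $\Ex_m[X_k] \gg 1$, the second term is negligible compared to $\Ex_m[X_k]^2$, and hence $\Var_m(X_k) / \Ex_m[X_k]^2 \to 0$, completing the proof. The only potentially delicate point is ensuring that Lemma~\ref{2ndmom} is applicable, which requires the regime $1 \ll k \ll m \ll k^2$ to hold; this is precisely the range in which Lemma~\ref{le:existsO} will be invoked in the proof of Lemma~\ref{le:existsb}, so I would either state this restriction implicitly or verify it in the applications. No serious obstacle arises.
\end{psketch}
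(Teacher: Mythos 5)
Your proof is correct and follows the paper's argument exactly: second moment method via Chebyshev, expand the variance into diagonal and off-diagonal terms, apply Lemma~\ref{2ndmom} to the off-diagonal pairs, and use $\Ex[X_k] \gg 1$ to dismiss the diagonal term. You also rightly flag that the regime $1 \ll k \ll m \ll k^2$ must hold for Lemma~\ref{2ndmom} to apply, which is a genuine (if minor) point the paper leaves implicit.
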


\begin{proof}
We have
\begin{align*}
\Var\big( X_k \big) & \, = \, \Exp\big[ X_k^2 \big] - \Exp\big[ X_k \big]^2 \, = \, \sum_{\O,\O' \in \SF(G)} \Pr_m\big( M^\O_k(A) \cap M^{\O'}_k(A) \big) - \Exp\big[ X_k \big]^2 \\
&\, = \,  \Exp\big[ X_k \big] + \sum_{\O\neq\O'} \Pr_m\big( M^\O_k(A) \cap M^{\O'}_k(A) \big) - \Exp\big[ X_k \big]^2 \\
&\, = \,  \Exp\big[ X_k \big] + \big( 1+o(1) \big) \sum_{\O \neq \O'} \Pr_m\big( M^\O_k(A) \big)^2 - \Exp\big[ X_k \big]^2,
\end{align*}
by Lemma~\ref{2ndmom}. Therefore,
\[
\Var\big( X_k \big) \,\le\, \Exp\big[ X_k \big] + \big( 1+o(1) \big)\Exp\big[ X_k \big]^2 - \Exp\big[ X_k \big]^2 \,=\, o\big( \Exp\big[ X_k \big]^2 \big).
\]
Hence, by Chebyshev's inequality, we have $X_k \ge 1$ with high probability as $n \to \infty$.
\end{proof}

It only remains to show that $\Exp\big[ X_k \big]$ does not decay too quickly.

\begin{lemma}\label{le:expXdiff}
For every constant $h > 0$, we have
\[
\big| \Exp\big[ X_k \big] - \Exp\big[ X_{k+h} \big] \big| \, = \, o\big( \Exp\big[ X_k \big] \big).
\]
\end{lemma}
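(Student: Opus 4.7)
The plan is a direct telescoping computation, using nothing beyond the asymptotics already established in this appendix. Since $M^\O_{k+h}(A) \subset M^\O_k(A)$ and these events are nested, we can write
$$\Exp[X_k] - \Exp[X_{k+h}] \; = \; |\SF(G)| \sum_{j=0}^{h-1} \Pr_m\big( \hat{M}^\O_{k+j}(A) \big).$$
By Lemma~\ref{le:hypgeoest} (with $N = n$, $a = m/2$ and $b = k+j$, whose hypothesis $b^{3/2} \ll a$ holds in the regime of interest), each term on the right is asymptotic to $\sqrt{2/(\pi m)}\exp\big(-2(k+j)^2/m\big)$. Since $h$ is fixed and $k \ll m$, we have $(k+j)^2 = k^2 + O(k) = k^2 + o(m)$, so $\exp(-2(k+j)^2/m) \sim \exp(-2k^2/m)$ uniformly over $0 \le j < h$. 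Consequently
$$\Exp[X_k] - \Exp[X_{k+h}] \; = \; O\!\left( h \cdot |\SF(G)| \cdot m^{-1/2}\exp\!\big(-2k^2/m\big) \right).$$

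Comparing this with the lower bound $\Exp[X_k] = \Theta\big( |\SF(G)| \cdot \sqrt{m}/k \cdot \exp(-2k^2/m) \big)$ supplied by Lemma~\ref{le:expXtheta}, the ratio is
$$\frac{\Exp[X_k] - \Exp[X_{k+h}]}{\Exp[X_k]} \; = \; O\!\left( \frac{hk}{m} \right) \; = \; o(1),$$
since $k \ll m$ under the standing assumption $1 \ll k \ll m \ll k^2$ governing this appendix.

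There is no real obstacle: the only thing to verify is that the range restriction $k^{3/2} \ll m$ required by Lemma~\ref{le:hypgeoest} is met, which follows automatically from $k \ll m$ once $k \to \infty$. Conceptually, the lemma says exactly what it should: we are removing from the upper tail only $h$ ``point masses,'' each of size $\Theta(m^{-1/2}\exp(-2k^2/m))$, whereas the entire tail $\Pr_m(M^\O_k(A))$ exceeds these by the factor $\sqrt{m}/k \gg 1$, so the extra $h$ levels are a vanishingly small fraction of the whole.
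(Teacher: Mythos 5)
Your argument is essentially the paper's own: express the telescoping difference $\Exp[X_k]-\Exp[X_{k+h}]$ as $h$ point masses estimated via Lemma~\ref{le:hypgeoest} (the paper routes this through Lemma~\ref{le:expX}, which is the same computation), and compare against the lower bound from Lemma~\ref{le:expXtheta} to obtain a ratio of $O(hk/m)=o(1)$. The one inaccuracy is your closing claim that $k^{3/2}\ll m$ ``follows automatically from $k\ll m$'': it does not --- take $k=m^{3/4}$, which satisfies $k\ll m$ yet has $k^{3/2}=m^{9/8}\gg m$. That condition must instead be carried as a standing hypothesis inherited from Lemma~\ref{le:hypgeoest}; the paper uses it implicitly in exactly the same way (both Lemma~\ref{le:expX} and Lemma~\ref{le:expXtheta} rely on it), so this does not represent a gap beyond what the paper already leaves to the reader, but the stated justification is not a valid implication.
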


\begin{proof}
By Lemma~\ref{le:expXtheta}, we have
$$\Exp X_k \,=\, \Omega\left( \frac{r(G)\sqrt{m}}{k} \exp\left(-\frac{2k^2}{m}\right) \right).$$
whereas, by Lemma~\ref{le:expX}, we have
$$\Exp\big[ X_k \big] - \Exp\big[ X_{k+h} \big] \, = \, O\left( \frac{r(G)}{\sqrt{m}} \sum_{x=k}^{k+h} \exp\left(-\frac{2x^2}{m}\right) \right) \, = \, O\left( \frac{r(G)}{\sqrt{m}} \exp\left(-\frac{2k^2}{m}\right) \right).$$
Since we assumed that $k\ll m$, the lemma follows.
\end{proof}

\begin{proof}[Proof of Lemma \ref{le:existsb}]
If $r(G) \le n^\gamma$ then the lemma is trivial (set $b = 0$), so assume that $r(G) > n^\gamma$ and let $b = b(G,m)$ be minimal such that $\Ex\big[ X_b(A) \big] \le n^\gamma$. It follows that $\Ex\big[ X_{b+h}(A) \big] \gg 1$, by Lemma~\ref{le:expXdiff}, and hence that $X_{b + h}(A) \ge 1$ with high probability, by Lemma~\ref{le:existsO}, as required.
\end{proof}

\subsection{Proof of Lemma~\ref{lem:M:nice}}

Let $G$ be an even-order abelian group, and note that the lemma is trivial if $r(G) \le \delta n$. Recall that $\M$ denotes the collection of odd cosets $\O \in \SF(G)$ such that $|A \cap \O|$ is maximal. We are required to prove that with high probability there is an $\O \in \M$ such that $\E = G \setminus \O$ is nice. This is an immediate consequence of the following lemma. Recall that $\omega = \omega(n)$ is a function such that $\omega \to \infty$ slowly as $n \to \infty$.

\begin{lemma}
With high probability, the following hold:
\begin{itemize}
\item[$(a)$] $|A \cap \O| \le pn + \omega \sqrt{pn}$ for every subgroup $\E = G \setminus \O$ which is not nice.
\item[$(b)$] There exists a nice subgroup $\E = G \setminus \O$ such that $|A \cap \O| \ge pn + \omega\sqrt{pn}$.
\end{itemize}
\end{lemma}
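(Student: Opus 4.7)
The plan is to prove $(a)$ by a direct Chernoff bound followed by a union bound over the $O(1/\delta)$ non-nice subgroups, and to prove $(b)$ by the second moment method after coupling to the hypergeometric distribution. Throughout we may assume $r(G) > \delta n$, since otherwise all index 2 subgroups are nice, which makes $(a)$ vacuous and reduces $(b)$ to the same second moment argument applied to all of $\SF(G)$.

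For $(a)$, each random variable $|A \cap \O|$ is $\Bin(n,p)$-distributed under the $p$-random measure, so Chernoff's inequality gives
\[
\Pr\big( |A \cap \O| \ge pn + \omega\sqrt{pn} \big) \, \le \, \exp(-\omega^2/3).
\]
By the lemma preceding Table~\ref{table:thetable}, there are at most $2/\delta$ non-nice index 2 subgroups, so the union bound over non-nice cosets closes $(a)$ provided $\omega^2 \gg \log(1/\delta)$, which is a mild requirement since $\delta$ tends to zero sufficiently slowly.

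For $(b)$, I would couple to $\Pr_m$ for $m \in (1 \pm \delta^2) 2pn$ via Chernoff, exactly as in the proof of the $1$-statement. Fix such an $m$, set $k := \lceil pn + \omega\sqrt{pn} - m/2 \rceil = \Theta(\omega\sqrt{pn})$ uniformly in $m$, and define
\[
X'_k(A) \, := \sum_{\substack{\O \in \SF(G) \\ G \setminus \O \textup{ is nice}}} \mathbbm{1}\big[ |A \cap \O| \ge m/2 + k \big].
\]
Assuming $\omega = o(\sqrt{\log n})$, the regime $1 \ll k \ll m \ll k^2$ holds, so by Lemma~\ref{le:expXtheta} and the fact that all but $O(1/\delta)$ of the $r(G) - 1$ index 2 subgroups are nice,
\[
\Ex_m\big[ X'_k(A) \big] \, = \, \big( r(G) - O(1/\delta) \big) \cdot \Theta\left( \frac{\sqrt{m}}{k} e^{-2k^2/m} \right) \, = \, \Omega\left( \frac{r(G)}{\omega} e^{-O(\omega^2)} \right) \, \gg \, 1,
\]
where the final step uses $r(G) \ge \delta n$ and $\omega^2 \ll \log(\delta n)$. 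A variance computation identical to the one in Lemma~\ref{le:existsO}, using Lemma~\ref{2ndmom} (whose hypotheses hold in our regime), then yields $\Var_m(X'_k) = o\big( \Ex_m[X'_k]^2 \big)$, and Chebyshev's inequality gives $X'_k(A) \ge 1$ with high probability under $\Pr_m$, uniformly in $m \in (1 \pm \delta^2) 2pn$.

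The main subtlety is a quantitative one: ensuring that $\omega$ can be chosen so that both parts succeed simultaneously, i.e.\ that $\omega \to \infty$ can be arranged with $\log(1/\delta) \ll \omega^2 \ll \log(\delta n)$. Since $\delta$ tends to zero sufficiently slowly, $\log(1/\delta) = o(\log n)$, so a suitable window for $\omega$ always exists. Once its growth rate is fixed within this window, the rest of the proof is essentially bookkeeping: verifying that Lemmas~\ref{le:expXtheta}, \ref{2ndmom} and~\ref{le:existsO} (whose statements sum over all odd cosets) transfer to $X'_k$, which differs from $X_k$ in at most $O(1/\delta) = o(r(G))$ terms.
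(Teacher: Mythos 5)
Your treatment of part $(a)$ matches the paper: Chernoff followed by a union bound over the $O(1/\delta)$ non-nice subgroups. For part $(b)$ you take a genuinely more direct route than the paper: rather than invoking Lemma~\ref{le:existsb} as a black box (to obtain a $b$ with $\Ex[X_b]\le\sqrt n$ and $X_{b+h}\ge 1$ w.h.p.) and then comparing expectations via Lemma~\ref{le:expXtheta}, you apply the second moment method directly at the relevant threshold and restrict the counting variable to nice cosets. This is a legitimate alternative, and it has the virtue of making the niceness requirement explicit; the paper's proof only produces \emph{some} odd coset above $pn+\omega\sqrt{pn}$ and must implicitly intersect with part $(a)$ to conclude it is nice.

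There is, however, a genuine gap in your coupling step. You condition on $m\in(1\pm\delta^2)2pn$ and assert that $k=\lceil pn+\omega\sqrt{pn}-m/2\rceil=\Theta(\omega\sqrt{pn})$ uniformly in $m$. But on this range $|m/2-pn|$ can be as large as $\delta^2 pn$, and since $pn=\Theta(\sqrt{n\log n})$ while $\omega$ grows slowly, $\delta^2 pn\gg\omega\sqrt{pn}$. At the lower end $m=(1-\delta^2)2pn$ one gets $k=\Theta(\delta^2 pn)$, which pushes $e^{-2k^2/m}$ far below $1/r(G)$, destroying the estimate $\Ex_m[X'_k]\gg 1$, and may also break $k\ll m$. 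The repair is cheap: Chernoff actually gives $m=2pn\pm O(\sqrt{\omega}\,\sqrt{pn})$ w.h.p., and on this tighter range $|m/2-pn|=o(\omega\sqrt{pn})$ so that $k=(1+o(1))\omega\sqrt{pn}$ uniformly, after which your calculation goes through. A minor further caveat: your claim that when $r(G)\le\delta n$ part $(b)$ ``reduces to the same second moment argument'' is not right in general (for $r(G)=O(1)$ the statement is simply false); the correct observation, as in the paper, is that Lemma~\ref{lem:M:nice} is trivial in that regime, so the present lemma is not needed there.
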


\begin{proof}
Part~$(a)$ follows from Chernoff's inequality and the union bound, since there are at most $O(1/\delta)$ subgroups that are not nice. To prove part~$(b)$, we again couple with the hypergeometric distribution, and apply Lemma~\ref{le:existsb}. Indeed, we have $|A| \ge 2pn - \omega\sqrt{pn}$ with high probability, and for each $m \ge 2pn - \omega\sqrt{pn}$ there exists a $b = b(G,m)$ such that  $\Ex\big[ X_b(A) \big] \le \sqrt{n}$ and $X_b(A) \ge 1$ with high probability in $\Pr_m$. But, by Lemma~\ref{le:expXtheta}, we have $\Ex\big[ X_b(A) \big] = n^{1 + o(1)}$ for $b = pn + \omega\sqrt{pn}$, and so we are done.
\end{proof}

\section*{Acknowledgements}

This research was begun during a visit of N.B. to IMPA in January and February 2013. He is grateful for their hospitality and support, and for many fruitful discussions. The authors would also like to thank Nathan Kettle for helpful conversations, and the referee for a very careful reading of the proof, and for numerous suggestions which improved the presentation.

\end{document}